\let\@@pmod\pmod
\DeclareRobustCommand{\pmod}{\@ifstar\@pmods\@@pmod}
\def\@pmods#1{\mkern4mu({\operator@font mod}\mkern 6mu#1)}
\newtheorem{theorem}{Theorem}
\newtheorem{lemma}[theorem]{Lemma}
\newtheorem{conjecture}[theorem]{Conjecture}
\newtheorem{corollary}[theorem]{Corollary}
\theoremstyle{remark}
\numberwithin{theorem}{section}
\numberwithin{equation}{section}
\newcommand{\C}{\mathbb{C}}
\newcommand{\Q}{\mathbb{Q}}
\newcommand{\R}{\mathbb{R}}
\newcommand{\Z}{\mathbb{Z}}
\newcommand{\N}{\mathbb{{N}}}
\newcommand{\HH}{\mathbb{{H}}}
\DeclareMathOperator{\GL}{GL}
\DeclareMathOperator{\SL}{SL}
\DeclareMathOperator{\tr}{tr}
\newcommand{\eul}{\varphi}
\newcommand{\eps}{\varepsilon}
\DeclareMathOperator{\lcm}{lcm}
\DeclareMathOperator{\ord}{ord}
\DeclareMathOperator{\height}{ht}
\def\sumprime{\operatornamewithlimits{\sum\nolimits^\prime}}
\newcommand{\sm}{\left(\begin{smallmatrix}}
\newcommand{\esm}{\end{smallmatrix}\right)}
\newcommand{\bpm}{\begin{pmatrix}}
\newcommand{\ebpm}{\end{pmatrix}}
\begin{document}
\title{A conjectural extension of Hecke's converse theorem}
\author{Sandro Bettin}
\author{Jonathan W. Bober}
\author{Andrew R. Booker}
\author{Brian Conrey}
\author{Min Lee}
\author{Giuseppe Molteni}
\author{Thomas Oliver}
\author{David J. Platt}
\author{Raphael S. Steiner}
\address{(S.~B.) Dipartimento di Matematica\\Universit\`a di Genova\\via Dodecaneso 35\\16146 Genova\\Italy}
\address{(J.~W.~B.\ and T.~O.) School of Mathematics, University of Bristol, Bristol, BS8 1TW, UK, and the Heilbronn Institute for Mathematical Research, Bristol, UK}
\address{(A.~R.~B., M.~L., D.~J.~P.\ and R.~S.~S.) School of Mathematics, University of Bristol, Bristol, BS8 1TW, UK}
\address{(B.~C.) School of Mathematics, University of Bristol, Bristol, BS8 1TW, UK, and the American Institute of Mathematics, San Jose, CA}
\address{(G.~M.) Dipartimento di Matematica\\Universit\`a di Milano\\via Saldini 50\\20133 Milano\\Italy}
\thanks{S.~B.\ and G.~M.\ were partially supported by PRIN ``Number Theory and Arithmetic Geometry". A.~R.~B., M.~L.\ and D.~J.~P.\ were partially supported by EPSRC Grant \texttt{EP/K034383/1}.}
\email{\texttt{andrew.booker@bristol.ac.uk} (corresponding author)}
\begin{abstract}
We formulate a precise conjecture that, if true, extends the converse
theorem of Hecke without requiring hypotheses on twists by Dirichlet
characters or an Euler product. The main idea is to linearize the
Euler product, replacing it by twists by Ramanujan sums.
We provide evidence for the conjecture, including proofs of some special
cases and under various additional hypotheses.
\end{abstract}
\keywords{modular forms, converse theorems, Ramanujan sums}
\subjclass[2010]{11F11, 11F66, 11F06}
\maketitle

\section{Introduction}
Let $f\in M_k(\Gamma_0(N),\xi)$ be a classical holomorphic modular form
of weight $k$, level $N$ and nebentypus character $\xi$, and define
\begin{equation}\label{eq:gdef}
g(z)=(\sqrt{N}z)^{-k}f\!\left(-\frac1{Nz}\right).
\end{equation}
Let $f_n$ and $g_n$ denote
the Fourier coefficients of $f$ and $g$, respectively, and define
\begin{equation}\label{eq:Lambdaf}
\Lambda_f(s)=\Gamma_\C(s+\tfrac{k-1}2)\sum_{n=1}^\infty f_nn^{-s-\frac{k-1}2}
\quad\text{and}\quad
\Lambda_g(s)=\Gamma_\C(s+\tfrac{k-1}2)\sum_{n=1}^\infty g_nn^{-s-\frac{k-1}2}
\end{equation}
for $\Re(s)>\frac{k+1}2$, where $\Gamma_\C(s):=2(2\pi)^{-s}\Gamma(s)$.
Then $\Lambda_f(s)$ and $\Lambda_g(s)$
continue to entire functions of finite order, apart from at most simple
poles at $s=\frac{1\pm k}2$, and satisfy the functional equation
\begin{equation}\label{eq:fe}
\Lambda_f(s)=i^kN^{\frac12-s}\Lambda_g(1-s).
\end{equation}

Conversely, when $N\le 4$, Hecke \cite{Hecke,Hecke2} (see
also~\cite{BerndtKnopp}) showed that the modular
forms of level $N$ are characterized by these properties. Precisely,
given sequences
$\{f_n\}_{n=1}^\infty$, $\{g_n\}_{n=1}^\infty$ of at most polynomial growth,
if the functions $\Lambda_f(s)$ and $\Lambda_g(s)$ defined by
\eqref{eq:Lambdaf} continue to entire functions
of finite order and satisfy \eqref{eq:fe} then $f_n$ and $g_n$ are the
Fourier coefficients of modular forms of level $N$ and weight $k$,
related by \eqref{eq:gdef}.

When $N\ge5$, Hecke's proof no longer goes through, and in fact
the vector space of sequences $\{f_n\}_{n=1}^\infty$,
$\{g_n\}_{n=1}^\infty$ satisfying the above conditions is infinite
dimensional.
Weil~\cite{weil} showed that one can recover the converse statement by
assuming additional functional equations for twisted $L$-functions
\begin{equation}\label{eq:Lfchidef}
\Lambda_f(s,\chi)=\Gamma_\C(s+\tfrac{k-1}2)\sum_{n=1}^\infty f_n\chi(n)n^{-s-\frac{k-1}2}
\end{equation}
for primitive characters $\chi$ of conductor coprime to $N$.
On the other hand, it has been conjectured (see
\cite[Conjecture~1.2]{fkl}) that
if $\Lambda_f(s)$ and $\Lambda_g(s)$ have Euler product
expansions\footnote{We regard the factors of $\Gamma_\C(s+\frac{k-1}2)$
in \eqref{eq:Lambdaf} as Euler factors for the archimedean place.}
of the shape satisfied by primitive Hecke eigenforms then the single
functional equation \eqref{eq:fe} should suffice to imply modularity,
without the need for character twists. Some partial progress on
this problem was made by Conrey and Farmer \cite{conrey-farmer} (see
also~\cite{ConreyFarmerOdgersSnaith}), who proved the conjecture for
some values of $N$ exceeding $4$.

One drawback of assuming an Euler product is that it imposes a nonlinear
constraint on the Fourier coefficients $f_n,g_n$,
so the solutions to \eqref{eq:fe} no longer
form a vector space. In turn, it is unclear how to make use of
this constraint to extend Hecke's proof to higher level.
In this paper we propose a replacement for the Euler product
that, we conjecture, characterizes the modular forms
of any level $N$, yet retains the linearity of \eqref{eq:fe}:
\begin{conjecture}\label{main}
Let $\xi$ be a Dirichlet character modulo $N$, $k$ a positive integer
satisfying $\xi(-1)=(-1)^k$, and
$\{f_n\}_{n=1}^\infty,\{g_n\}_{n=1}^\infty$ sequences of complex numbers
satisfying $f_n,g_n=O(n^\sigma)$ for some $\sigma>0$.
For $q\in\N$, let
$$
c_q(n)=\sum_{\substack{a\pmod*{q}\\(a,q)=1}}e\!\left(\frac{an}{q}\right)
$$
be the associated Ramanujan sum, where $e(x):=e^{2\pi ix}$, and define
$$
\Lambda_f(s,c_q)=\Gamma_\C\bigl(s+\tfrac{k-1}2\bigr)\sum_{n=1}^\infty
\frac{f_nc_q(n)}{n^{s+\frac{k-1}2}}
\quad\text{and}\quad
\Lambda_g(s,c_q)=\Gamma_\C\bigl(s+\tfrac{k-1}2\bigr)\sum_{n=1}^\infty
\frac{g_nc_q(n)}{n^{s+\frac{k-1}2}}
$$
for $\Re(s)>\sigma+1-\frac{k-1}2$.
For every $q$ coprime to $N$, suppose that $\Lambda_f(s,c_q)$ and
$\Lambda_g(s,c_q)$ continue to entire functions of
finite order and satisfy the functional equation
\begin{equation}\label{e:fe_cq}
	\Lambda_f(s,c_q)=i^k \xi(q)(Nq^2)^{\frac12-s}\Lambda_g(1-s,c_q).
\end{equation}
Then $f(z):=\sum_{n=1}^\infty f_ne(nz)$ is an element of
$M_k(\Gamma_0(N),\xi)$.
\end{conjecture}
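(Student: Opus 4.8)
The plan is to convert each functional equation~\eqref{e:fe_cq} into a modular transformation law, reassemble these over all $q$ coprime to $N$, and deduce the $\Gamma_0(N)$-invariance of $f$. Since $f_n=O(n^\sigma)$, the series $f(z)=\sum_{n\ge1}f_ne(nz)$ converges to a holomorphic, $1$-periodic function on $\HH$ with $f(x+iy)=O(y^{-\sigma-1})$ as $y\to0^+$, and likewise for $g$; with the convention $f_0=0$ built into this definition one has, exactly,
\[
F_q(z):=\sum_{n=1}^\infty f_nc_q(n)e(nz)=\sum_{\substack{a\pmod*{q}\\(a,q)=1}}f\!\left(z+\tfrac aq\right),
\]
a \emph{finite} sum of translates of $f$ --- the main payoff of twisting by Ramanujan sums rather than by Dirichlet characters --- and similarly $G_q$ in terms of $g$. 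Writing $W_M:=\sm0&-1\\M&0\esm$ and using the weight-$k$ action of $\GL_2^+(\Q)$ normalised by $\det^{k/2}$, the classical equivalence between a Hecke-type functional equation and a modular transformation --- the argument of Hecke, turned into an ``if and only if'' by Weil, applicable here because $\Lambda_f(\cdot,c_q)$ and $\Lambda_g(\cdot,c_q)$ are entire of finite order --- shows that~\eqref{e:fe_cq} is equivalent to the identity
\[
F_q\big|_kW_{Nq^2}=\xi(q)\,G_q .
\]
The case $q=1$, which is always allowed, reads $g=f|_kW_N$ (matching~\eqref{eq:gdef}), so $g$ is determined by $f$; substituting this into the relation for general $q$ leaves, for each $q$ coprime to $N$, a single linear relation among functions of the form $f|_k\gamma$ with $\gamma\in\GL_2^+(\Q)$.

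Second, it helps to reorganise these relations using the Ramanujan-sum M\"obius inversion $\mathbf 1[q\mid n]=q^{-1}\sum_{d\mid q}c_d(n)$, a \emph{finite} identity all of whose terms involve only divisors of $q$, hence moduli coprime to $N$. This converts the family~\eqref{e:fe_cq} into functional equations for the completed $L$-functions of the sub-series $f^{(q)}(z):=\sum_{m\ge1}f_{qm}e(mz)$ and $g^{(q)}$, in which $f^{(q)}$ and $g^{(q)}$ are coupled through $W_{Nq^2}$ together with correction terms built from the lower sub-series $f^{(d)},g^{(d)}$ over the proper divisors $d\mid q$ (for $q$ prime the correction term is an explicit multiple of $\Lambda_f$). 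Concretely, for $(a,q)=1$ one has the decomposition
\[
\bpm 1&a/q\\0&1\ebpm W_{Nq^2}=\gamma_{q,a}\bpm Nq&-t\\0&q\ebpm,\qquad
\gamma_{q,a}=\bpm a&(at-1)/q\\ q&t\ebpm\in\SL_2(\Z),\quad at\equiv1\pmod{q},
\]
so that $F_q|_kW_{Nq^2}$ is a sum of the functions $f|_k\gamma_{q,a}$ evaluated on a rescaled variable; note that $\gamma_{q,a}\notin\Gamma_0(N)$ when $q>1$, since $(q,N)=1$, so these are genuinely new transforms of $f$.

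Third --- and this is the crux --- one must bootstrap the resulting family of relations into the $\Gamma_0(N)$-invariance of $f$. The natural route is an induction on $q$: the relation attached to $q$, together with $f|_kW_N$, the periodicity $f|_k\sm1&1\\0&1\esm=f$, and the relations already established for the proper divisors of $q$, ought to pin down enough of the transformation behaviour of $f$ under the $\gamma_{q,a}$ that, running over all $q$, one recovers --- by a group-theoretic argument in the spirit of Weil's converse theorem --- the transformation of $f$ under a generating set of $\Gamma_0(N)$. I expect this step to be the essential obstacle, and it is the reason the statement is only conjectural: Weil's theorem uses the functional equations of $\Lambda_f(s,\chi)$ for \emph{all} primitive Dirichlet characters $\chi$ of conductor coprime to $N$, and extracts from them, by Fourier analysis over the characters modulo $q$, the transformation of $f$ under matrices with lower-left entry divisible by $N$, whereas the Ramanujan-sum twists available here carry strictly less information --- they see only $\gcd(n,q)$, not the residue of $n$ modulo $q$ --- so that argument is unavailable. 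Replacing it appears to require genuinely combining the relations for infinitely many $q$, through conditionally convergent M\"obius sums with $|c_q(n)|$ as large as $\varphi(q)$, while retaining uniform analytic control of the error terms; this can be accomplished in special cases and under additional hypotheses --- for instance of Euler-product type, which closes the recursion by rendering the $f^{(q)}$ redundant --- and underlies the evidence presented below.
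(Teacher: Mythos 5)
The statement you are addressing is Conjecture~\ref{main}, which the paper does not prove; it only establishes special cases (Theorem~\ref{thm:smallN}) and conditional versions (Theorems~\ref{thm:finiteindex}--\ref{thm:singleq}). Your proposal is accordingly a strategy rather than a proof, and you are right to flag the third step as the genuine obstruction: that is precisely where the paper also stops. Your first step is sound and coincides with the paper's Lemma~\ref{lem:cq}: since $c_q(n)=\sum_{(a,q)=1}e(an/q)$, the twist $F_q$ is a finite sum of translates of $f$, and Hecke's argument converts \eqref{e:fe_cq} into $F_q|W_{Nq^2}=\xi(q)G_q$; eliminating $g$ via the $q=1$ case yields a single linear relation among transforms of $f$. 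One caveat on your matrix bookkeeping: the paper arranges this relation so that the new matrices lie in $\Gamma_0(N)$ with top row $\sm q&-a\esm$ (via the identity $\sm0&-1\\N&0\esm\sm -N c&(Nq)^{-1}\\-Nq&0\esm=\gamma_{q,a}\sm1&a/q\\0&1\esm$ up to scaling), giving $\sum_{(a,q)=1}f|[\gamma_{q,a}-\overline{\xi(q)}]\sm1&a/q\\0&1\esm=0$ with $\gamma_{q,a}\in\Gamma_0(N)$. Your decomposition instead produces matrices with lower-left entry $q$, which are not in $\Gamma_0(N)$; this is an equivalent relation but obscures the connection to the generators of $\Gamma_0(N)$ that all of the paper's subsequent arguments exploit.

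Where you and the paper diverge is in how the relations are to be bootstrapped. Your proposed M\"obius-inversion reorganization into sub-series $f^{(q)}$ is not what the paper does, and as stated it does not close the recursion without an Euler product (as you acknowledge). The paper's partial results instead proceed group-theoretically from the Lemma~\ref{lem:cq} relation: for small $N$ one checks that $\Gamma_0(N)$ is generated by $T$, $W$ and a few $\gamma_{q,a}$, and isolates individual terms using elliptic elements of infinite order together with an extension of Weil's lemma (Lemmas~\ref{lem:weil}--\ref{lem:1qinv}); for the conditional theorems one groups the terms $f|\gamma_{q,a}$ by cosets of a finite-index subgroup, takes Fourier coefficients, and kills them via the nonvanishing of a determinant of sums of roots of unity over distinct primes $q_1,\dots,q_h$ (Lemma~\ref{lem:key}), combined with density results on generating $\Gamma_1(N)$ by matrices with fixed upper-left entry (Theorem~\ref{thm:almostall}). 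None of these closes the general case, so your assessment that the crux remains open is accurate; but if you intend your write-up to serve as evidence for the conjecture, you should replace the sub-series heuristic with the coset/determinant mechanism, which is the part of the paper that actually extracts unconditional information from infinitely many $q$.
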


To understand the motivation behind this conjecture, we first consider
a more general family of twists.  Let $\chi\pmod*{q}$ be a Dirichlet
character, not necessarily primitive, and define
\begin{equation}\label{e:cchi}
	c_\chi(n)
	=
	\sum_{\substack{a\pmod*{q}\\(a,q)=1}}
	\chi(a)e\!\left(\frac{an}q\right),
\end{equation}
\begin{equation}\label{e:Lambda}
	\Lambda_f(s,c_\chi)=\Gamma_\C(s+\tfrac{k-1}2)
	\sum_{n=1}^\infty\frac{f_nc_\chi(n)}{n^{s+\frac{k-1}2}}
	\quad\text{and}\quad
	\Lambda_g(s,c_{\overline{\chi}})=\Gamma_\C(s+\tfrac{k-1}2)
	\sum_{n=1}^\infty\frac{g_nc_{\overline{\chi}}(n)}{n^{s+\frac{k-1}2}}.
\end{equation}
Note that when $\chi$ is the trivial character mod $q$, $c_\chi$ reduces
to the Ramanujan sum, $c_q$.
In Lemma~\ref{lem:Lambda_chi}, we show that if we start from a pair of modular forms
$f,g$ satisfying \eqref{eq:gdef}, then $\Lambda_f(s,c_\chi)$ and
$\Lambda_g(s,c_{\overline{\chi}})$
satisfy the functional equation
\begin{equation}\label{eq:fe_cchi}
	\Lambda_f(s,c_\chi)
	=
	i^k\xi(q)\overline{\chi(-N)}(Nq^2)^{\frac12-s}
	\Lambda_g(1-s,c_{\overline{\chi}}).
\end{equation}
When $\chi$ is primitive, we have
$c_\chi(n)=\tau(\chi)\overline{\chi(n)}$, where
$\tau(\chi)=\sum_{a=1}^q\chi(a)e(a/q)$ denotes the Gauss sum,
and \eqref{eq:fe_cchi} reduces to the familiar functional equation
for the multiplicative twist $\Lambda_f(s,\overline{\chi})$.
More generally,
when $\Lambda_f(s)$ possesses an Euler product, we show in
Lemma~\ref{lem:L_twist_Rsum} that \eqref{eq:fe_cchi} is implied by the functional
equation for $\Lambda_f(s,\overline{\chi}_*)$, where $\chi_*$ is the
primitive character inducing $\chi$.
In particular, in the presence of an Euler product, \eqref{eq:fe} implies
\eqref{e:fe_cq}.

Given any $Q\in\N$ and $q\mid Q$, we can view $c_\chi$ for $\chi\pmod*{q}$
as a function on $\Z/Q\Z$. One can show that as $\chi$ ranges over
all characters of modulus dividing $Q$, the functions $c_\chi$ form
an orthogonal basis for the space of functions on $\Z/Q\Z$. Thus, any
twist of $f$ with periodic coefficients and period coprime to $N$ is
a linear combination of the twists by $c_\chi$. In this sense,
\eqref{eq:fe_cchi}
is the most general functional equation (from twists with period coprime
to the level) that one can expect.

Conjecture~\ref{main} arises from the speculation that any constraints
on the solutions to \eqref{eq:fe} imposed by the assumption of an
Euler product are already implied by the extra functional equations
\eqref{eq:fe_cchi} that one obtains from taking $\chi$ equal to the
trivial character mod $q$. In Section~\ref{sec:results}, we prove five
theorems that lend some support to the conjecture:
\begin{enumerate}
\item Theorem~\ref{thm:smallN} establishes Conjecture~\ref{main} for
some values of $N$ exceeding $4$, following the methods of Conrey and
Farmer \cite{conrey-farmer}.
\item Theorem~\ref{thm:finiteindex} proves Conjecture~\ref{main} under
the additional assumption that $f$ is modular for some subgroup of
finite index in $\SL_2(\Z)$ (not necessarily a congruence subgroup).
\item Theorem~\ref{thm:absolutef} proves Conjecture~\ref{main} under the
additional assumption that $|f|$ is modular for some congruence
subgroup.
\item Theorem~\ref{thm:commutator} proves Conjecture~\ref{main} under
the additional assumptions that $N$ is prime and $f$ is modular for the
commutator subgroup of $\Gamma_0(N)$. This establishes a version of
Theorem~\ref{thm:finiteindex} for some cases of infinite index.
\item Theorem~\ref{thm:singleq} shows that for almost all primes $q$,
the hypotheses of Conjecture~\ref{main}, together with the expected
analytic properties and functional equations of the multiplicative
character twists \eqref{eq:Lfchidef} for the primitive characters
$\chi\pmod*{q}$, suffice to imply modularity. Particular examples of
suitable $q$ are given for some levels outside the scope of
Theorem~\ref{thm:smallN}.
\end{enumerate}
To set these results in context, we note that one reason why Hecke's
argument fails for $N\ge5$ is that there are counterexamples arising from
more general kinds of modular forms. If one believes that a twistless
converse theorem is possible assuming an Euler product, then it is
reasonable to ask how these counterexamples are eliminated by the Euler
product.  Points (2) and (3) above address two such generalizations of
modular forms, namely forms for noncongruence groups and forms for more
general weight-$k$ multiplier systems (not necessarily of finite order).

Concerning point (5), Diaconu, Perelli and Zaharescu \cite{DPZ} showed
that if $\Lambda_f(s)$ is given by an Euler product, then there exists
a prime $q$ (depending on $N$) such that the analytic properties
and functional equations of the character twists \eqref{eq:Lfchidef}
for all primitive $\chi$ of conductor dividing $q$ suffice to imply
modularity. On the other hand, again under the assumption of an Euler
product, it follows from a theorem of Piatetski-Shapiro \cite{PS} that
it suffices to assume the expected properties of \eqref{eq:Lfchidef} for
all primitive $\chi\pmod*{p^j}$ for any fixed prime $p$ and all $j\ge0$.
Point (5) can be seen as a complement to both of these results. We
conjecture that the proof of Theorem~\ref{thm:singleq}
can be extended to all sufficiently large primes $q$, and we study this
problem in detail in Section~\ref{sec:generating}.

\subsection*{Acknowledgements}
This paper grew out of a focused research workshop on the \emph{Sarnak
rigidity conjecture} at the Heilbronn Institute for Mathematical
Research. We thank the Institute for their support, which made this
work possible.

\section{Main results}\label{sec:results}
Let $\HH=\{z\in\C:\Im(z)>0\}$ denote the upper half-plane.
For any function $h:\HH\to\C$ and any matrix
$\gamma=\sm a&b\\c&d\esm\in\GL_2^+(\R)=\{M\in\GL_2(\R):\det{M}>0\}$,
define
$$
h|\gamma=(\det\gamma)^{k/2}(cz+d)^{-k}h\!\left(\frac{az+b}{cz+d}\right),
$$
where $k\in\N$ is the integer appearing in Conjecture~\ref{main}.
(We assume that $k$ is fixed from now on and suppress it from the
notation.)
Note that this defines a right action, i.e.\ 
$h|(\gamma_1\gamma_2)=(h|\gamma_1)|\gamma_2$ for any
$\gamma_1,\gamma_2\in\GL_2^+(\R)$.
We extend the action linearly to the group algebra $\C[\GL_2^+(\R)]$,
i.e.\ for $\gamma=\sum_i c_i\gamma_i\in\C[\GL_2^+(\R)]$ we define
$h|\gamma=\sum_i c_ih|\gamma_i$.

Let $f$ be as in Conjecture~\ref{main}, and define
$g(z)=\sum_{n=1}^\infty g_ne(nz)$.
Then, by Hecke's argument \cite[Theorem~4.3.5]{miyake},
the fact that $\Lambda_f(s,c_1)$
and $\Lambda_g(s,c_1)$ continue to entire functions of finite order and
satisfy \eqref{e:fe_cq} for $q=1$ is equivalent to the identity
$f|\sm&-1\\N&\esm=g$.
Writing $T=\sm1&1\\&1\esm$ and
$W=\sm&-1\\N&\esm T^{-1}\sm&-1\\N&\esm^{-1}=\sm1&\\N&1\esm$,
since $f$ and $g$ are given by Fourier series,
we have $f|T=f|W=f$.

Given a matrix $\gamma=\sm a&b\\c&d\esm\in\Gamma_0(N)$, we define
$\xi(\gamma)=\xi(d)$. Since $\xi(-1)=(-1)^k$, we have
$f|(-I)=\xi(-I)f$, and thus $f|\gamma=\xi(\gamma)f$ for every
$\gamma\in\langle-I,T,W\rangle$. To prove that $f\in
M_k(\Gamma_0(N),\xi)$, it suffices to verify this equality for every
$\gamma\in\Gamma_0(N)$, since the holomorphy of $f$ at cusps follows
from modularity and the growth estimate $f_n=O(n^\sigma)$.

Note that if $\gamma,\gamma'\in\Gamma_0(N)$ have the same top row
then $\gamma'\gamma^{-1}$ is a power of $W$, so that
$f|\gamma'=f|\gamma$. Thus, $f|\gamma$ depends only on the top row of
$\gamma$. With this in mind, we will write $\gamma_{q,a}$ to denote any
element of $\Gamma_0(N)$ with top row $\sm q&-a\esm$.

\begin{theorem}\label{thm:smallN}
Conjecture~\ref{main} is true for $N\le9$ and $N\in\{11,15,17,23\}$.
\end{theorem}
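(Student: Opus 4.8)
The plan is to follow Conrey and Farmer \cite{conrey-farmer}, feeding in the functional equations \eqref{e:fe_cq} where they use an Euler product. Set $g(z)=\sum_{n\ge1}g_ne(nz)$, and for $q$ coprime to $N$ put $F_q(z)=\sum_{n\ge1}f_nc_q(n)e(nz)=\sum_{a\bmod q,\,(a,q)=1}f(z+a/q)=f|R_q$, where $R_q=\sum_{(a,q)=1}\sm q&a\\0&q\esm\in\C[\GL_2^+(\R)]$, and similarly $G_q=g|R_q$. Since $c_q$ is supported on the integers and has moderate growth, the Dirichlet series behind $\Lambda_f(s,c_q)$ and $\Lambda_g(s,c_q)$ are, up to gamma factors, the Mellin transforms of $F_q$ and $G_q$, and Hecke's argument (\cite[Theorem~4.3.5]{miyake}, exactly as in the $q=1$ case recalled in Section~\ref{sec:results}) shows that the assumed continuation, finite order and functional equation \eqref{e:fe_cq}, of conductor $Nq^2$, are equivalent to the single transformation law $F_q\big|\sm 0&-1\\Nq^2&0\esm=\xi(q)\,G_q$; the extra scalar factor $q$ in the conductor is harmless because scalar matrices act trivially under the slash.

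The next step converts this into a statement about $\Gamma_0(N)$. Put $\phi(\gamma)=f|\gamma-\xi(\gamma)f$; this is a $1$-cocycle for the $\xi$-twisted action ($\phi(\gamma_1\gamma_2)=\phi(\gamma_1)|\gamma_2+\xi(\gamma_1)\phi(\gamma_2)$ and $\phi(\gamma^{-1})=-\xi(\gamma^{-1})\phi(\gamma)|\gamma^{-1}$), so $\{\gamma\in\Gamma_0(N):\phi(\gamma)=0\}$ is a subgroup, and by the remarks preceding the theorem it contains $\langle-I,T,W\rangle$; proving that it equals $\Gamma_0(N)$ finishes the proof, the holomorphy at cusps coming from $f_n=O(n^\sigma)$. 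Using $g=f|\sm 0&-1\\N&0\esm$, the facts that the Fricke involution $\sm 0&-1\\N&0\esm$ normalizes $\Gamma_0(N)$ and has scalar square, and the factorization $\sm aNq&-1\\Nq^2&0\esm=\gamma_{q,a}^{-1}\sm 0&-1\\N&0\esm\sm q&s\\0&q\esm$, valid with $\gamma_{q,a}\in\Gamma_0(N)$ of top row $\sm q&-a\esm$ and $s=s(q,a)\in\Z$ depending on $a$, one rewrites the law $F_q|\sm 0&-1\\Nq^2&0\esm=\xi(q)G_q$, after collecting terms and reindexing, as a single linear relation of the form $\sum_{a\bmod q,\,(a,q)=1}\phi(\gamma_{q,a})\big|\sm q&s(q,a)\\0&q\esm=0$ (up to replacing $\phi$ by the analogous cocycle for $g$, to which the computation leads most directly). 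Thus each $q$ coprime to $N$ supplies one linear relation, with $\eul(q)$ terms, among the cocycle values at the elements $\gamma_{q,a}$ and their translates.

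The remaining task — the heart of the proof, and the reason it is limited to the listed levels — is to deduce $\phi\equiv0$ on $\Gamma_0(N)$ from these relations together with $\langle-I,T,W\rangle\subseteq\{\phi=0\}$. For each $N$ in the list one fixes an explicit finite generating set of $\Gamma_0(N)$ consisting of $-I$, $T$, $W$ and elements $\gamma_{q,a}$ with $q$ in a small set of integers coprime to $N$, chosen small enough that the corresponding $\eul(q)$ are small. When $\eul(q)=1$ (e.g.\ $q=2$ for $N$ odd) the relation for $q$ reads $f|\gamma_{q,a}=\xi(\gamma_{q,a})f$ outright, adjoining a new generator to $\{\phi=0\}$; when $\eul(q)=2$ one gets a two-term relation which, combined with the generators already known to lie in $\{\phi=0\}$, the cocycle identity, and the group relations in $\Gamma_0(N)$ (including that replacing $a$ by $a\pm q$ changes $\gamma_{q,a}$ by a power of $T$), is forced to have both terms zero; and so on for the finitely many $q$ needed. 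Verifying case by case that for each $N\le9$ and $N\in\{11,15,17,23\}$ this finite system of relations is nondegenerate — so that the small-$q$ twists one has access to do suffice to generate $\Gamma_0(N)$ — is the main obstacle; it is exactly what confines the result to these $N$, since for larger $N$ the needed generators involve $q$ with $\eul(q)$ too large and the relations no longer pin down the individual cocycle values. Apart from the input being the hypothesized twisted functional equations rather than consequences of an Euler product, this is the argument of Conrey and Farmer.
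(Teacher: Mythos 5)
Your setup is right: Lemma~\ref{lem:cq} of the paper delivers exactly the relation you describe, namely
$\sum_{(a,q)=1} f|[\gamma_{q,a}-\overline{\xi(q)}]\sm 1&a/q\\&1\esm=0$,
and the reduction to showing $f|\gamma=\xi(\gamma)f$ on a finite generating set of $\Gamma_0(N)$ containing $-I,T,W$ is how the paper proceeds (handling $N\le4$ trivially, and $N\in\{5,7,9\}$ via the one-term relation at $q=2$, as you say). The gap is at the step you describe as the ``heart of the proof'': you assert that when $\eul(q)=2$ the two-term relation, ``combined with the generators already known to lie in $\{\phi=0\}$, the cocycle identity, and the group relations in $\Gamma_0(N)$, is forced to have both terms zero.'' No purely algebraic manipulation of this kind can work: for $N\ge5$ the space of solutions of the untwisted functional equation is infinite-dimensional, and the formal consequences of the cocycle identity and the group relations only ever convert the two-term relation into a single identity of the shape $h|[I-M]=0$ for an explicit $M\in\SL_2(\R)$, where $h=f|[\gamma_{q,1}-\overline{\xi(q)}]$. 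To conclude $h=0$ one needs a genuinely analytic input: the matrix $M$ must be checked to be \emph{elliptic of infinite order} (non-integral trace of modulus less than $2$), and then one invokes Weil's lemma that a holomorphic function on $\HH$ fixed, up to a root of unity, by such an element vanishes. This is the content of the paper's Lemmas~\ref{lem:weil}, \ref{lem:conrey-farmer} and \ref{lem:1qinv}, and it is the actual engine of the Conrey--Farmer method; your proposal never mentions it.

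Concretely, for $N=qs-1$ with $q,s\in\{3,4,6\}$ the paper combines the relations for $q$ and for $s$ using $\gamma_{q,\pm1}=\gamma_{s,\mp1}^{-1}$ to produce $M=\gamma_{q,1}^{-1}\sm 1&-2/s\\&1\esm\gamma_{s,1}^{-1}\sm 1&-2/q\\&1\esm$ with $\tr M=-3+4/(qs)\notin\Z$, $|\tr M|<2$; this settles $N\in\{8,11,17\}$ (and supplies generators for $15$ and $23$). The remaining generators for $N=6,15,23$ require a second elliptic-element argument (Lemma~\ref{lem:1qinv}) with case-specific matrix identities. Without identifying this mechanism --- and verifying ellipticity and infinite order in each case --- your case-by-case ``nondegeneracy'' check has no content, so the proof is incomplete at its central step.
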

\begin{proof}
The following table shows, for each $N$ in the statement of the theorem,
minimal generating sets for $\Gamma_0(N)$, verified with Sage \cite{sage}:
\begin{center}
\begin{tabular}{rl|rl}
$N$ & generators & $N$ & generators \\ \hline
$1$ & $\{T,W\}$ &
$8$ & $\{-I,T,W,\gamma_{3,1}\}$ \\
$2$ & $\{T,W\}$ &
$9$ & $\{-I,T,W,\gamma_{2,1}\}$ \\
$3$ & $\{T,-W\}$ &
$11$ & $\{-I,W,\gamma_{2,1},\gamma_{3,1}\}$\\
$4$ & $\{-I,T,W\}$ &
$15$ & $\{-I,T,W,\gamma_{2,1},\gamma_{4,1},\gamma_{11,4}\}$\\
$5$ & $\{T,W,\gamma_{2,1}\}$ &
$17$ & $\{T,W,\gamma_{2,1},\gamma_{3,1},\gamma_{6,1}\}$\\
$6$ & $\{-I,T,W,\gamma_{5,2}\}$ &
$23$ & $\{-I,T,W,\gamma_{2,1},\gamma_{4,1},\gamma_{6,1},\gamma_{10,-3}\}$\\
$7$ & $\{T,W,-\gamma_{2,1}\}$
\end{tabular}
\end{center}
In particular, for $N\le 4$, $\Gamma_0(N)$ is generated by
$-I$, $T$ and $W$, so there is nothing to prove. For all other levels
we apply the methods of Conrey and Farmer \cite{conrey-farmer}, in the
form of Lemmas~\ref{lem:cq}, \ref{lem:conrey-farmer} and \ref{lem:1qinv}.

For odd values of $N$, Lemma~\ref{lem:cq} with $q=2$ implies that
$f|\gamma_{2,1}=\overline{\xi(2)}f$. In view of
the table, this establishes the claim for
$N\in\{5,7,9\}$.

For $N\in\{8,11,15,17,23\}$ we obtain values of $q\in\{3,4,6\}$
for which $f|\gamma_{q,1}=\overline{\xi(q)}f$ from
Lemma~\ref{lem:conrey-farmer}. For $N\in\{8,11,17\}$ these are
sufficient to establish the claim.

It remains only to prove the claim for $N=6,15,23$, for which we need to
show modularity with respect to the generators
$\gamma_{5,2}$, $\gamma_{11,4}$, $\gamma_{10,-3}$, respectively.
For $N=6$ we have the equalities
$$
\bpm 5&-1\\6&-1\ebpm=-TW^{-1}
\quad\text{and}\quad
\bpm 5&1\\-6&-1\ebpm=-T^{-1}W,
$$
so Lemma~\ref{lem:cq} with $q=5$ takes the form
$$
f\biggl|\bigl[\gamma-\xi(-1)\bigr]\bpm 1&2/5\\&1\ebpm
+f\biggl|\bigl[\gamma^{-1}-\xi(-1)\bigr]\bpm 1&-2/5\\&1\ebpm
=0,
$$
where $\gamma=\sm 5&-2\\-12&5\esm$.
Applying Lemma~\ref{lem:1qinv} with $\alpha=4/5$ and $\zeta=-1$, we obtain
$f|\gamma=\xi(-1)f$.

For $N=15$ we have the equalities
$$
\bpm 8&-1\\-15&2\ebpm
=T^{-1}\bpm 2&-1\\15&-7\ebpm^{-1},
\quad
\bpm 8&1\\15&2\ebpm
=\bpm 2&-1\\-15&8\ebpm^{-1},
$$
$$
\bpm 8&-3\\75&-28\ebpm
=-\bpm 2&-1\\15&-7\ebpm T\bpm 11&-4\\-30&11\ebpm,
\quad
\bpm 8&3\\45&17\ebpm
=-\bpm 2&-1\\15&-7\ebpm T\bpm 11&-4\\-30&11\ebpm^{-1},
$$
so Lemma~\ref{lem:cq} with $q=8$ takes the form
$$
\xi(7)f\biggl|\bigl[\gamma-\xi(11)\bigr]\bpm 1&3/8\\&1\ebpm
+\xi(7)f\biggl|\bigl[\gamma^{-1}-\xi(11)\bigr]\bpm 1&-3/8\\&1\ebpm
=0,
$$
where $\gamma=\sm 11&-4\\-30&11\esm$.
Applying Lemma~\ref{lem:1qinv} with $\alpha=3/4$ and $\zeta=-1$, we obtain
$f|\gamma=\xi(11)f$.

For $N=23$ we have the equalities
$$
\bpm 3&-1\\-23&8\ebpm
=-\bpm 4&-1\\-23&6\ebpm
\bpm 6&-1\\-23&4\ebpm^{-1}
\bpm 10&3\\23&7\ebpm^{-1}
$$
and
$$
\bpm 3&1\\23&8\ebpm
=-\bpm 2&-1\\23&-11\ebpm
\bpm 10&3\\23&7\ebpm,
$$
so Lemma~\ref{lem:cq} with $q=3$ takes the form
$$
\xi(11)f\biggl|\bigl[\gamma-\xi(7)\bigr]\bpm 1&-1/3\\&1\ebpm
+\xi(10)f\biggl|\bigl[\gamma^{-1}-\xi(10)\bigr]\bpm 1&1/3\\&1\ebpm
=0,
$$
where $\gamma=\sm 10&3\\23&7\esm$.
Applying Lemma~\ref{lem:1qinv} with $\alpha=-2/3$ and $\zeta=-\xi(8)$,
we obtain $f|\gamma=\xi(7)f$.
\end{proof}

\begin{theorem}\label{thm:finiteindex}
Assume the hypotheses of Conjecture~\ref{main}.
Suppose that there is a subgroup $H<\Gamma_1(N)$ of finite index such
that $f|\gamma=f$ for all $\gamma\in H$. Then
$f\in M_k(\Gamma_0(N),\xi)$.
\end{theorem}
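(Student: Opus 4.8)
The plan is to turn the finite-index hypothesis into a finite-dimensional linear-algebra problem and then feed in the Ramanujan-sum functional equations. We may assume $f\neq0$. As in the discussion preceding Theorem~\ref{thm:smallN}, the $q=1$ case of \eqref{e:fe_cq} together with Hecke's argument gives $f\big|\sm&-1\\N&\esm=g$, and since $f$ and $g$ are given by Fourier series this forces $f|T=f|W=f$; with $\xi(-1)=(-1)^k$ we obtain $f|\gamma=\xi(\gamma)f$ for all $\gamma\in\langle-I,T,W\rangle$. As $H<\Gamma_1(N)$ we also have $f|\gamma=f=\xi(\gamma)f$ for $\gamma\in H$. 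All of $-I,T,W$ and $H$ lie in $\Gamma_0(N)$, so $\Delta:=\langle-I,T,W,H\rangle<\Gamma_0(N)$, of finite index since $\Delta\supseteq H$ has finite index in $\SL_2(\Z)$. As $f\neq0$ and the slash operation is a right action, $\gamma\mapsto(f|\gamma)/f$ is a well-defined homomorphism on $\Delta$ agreeing with the character $\xi$ on a generating set, so $f|\gamma=\xi(\gamma)f$ for every $\gamma\in\Delta$. Hence the $\C$-span $V$ of $\{f|\gamma:\gamma\in\Gamma_0(N)\}$ is finite-dimensional: writing $\Gamma_0(N)=\bigsqcup_i\Delta\gamma_i$ with finitely many cosets, $f|(\delta\gamma_i)=\xi(\delta)(f|\gamma_i)$ for $\delta\in\Delta$, so $V$ is spanned by the finitely many $f|\gamma_i$.

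The group $\Gamma_0(N)$ acts on $V$ on the right via $(v,\gamma)\mapsto v|\gamma$, a finite-dimensional representation in which $f|T=f|W=f$. By the remarks preceding Theorem~\ref{thm:smallN} it suffices to prove $f|\gamma=\xi(\gamma)f$ for $\gamma$ in a finite generating set of $\Gamma_0(N)$; since $qd\equiv1\pmod*{N}$ for $\gamma=\sm q&b\\c&d\esm\in\Gamma_0(N)$, every such $\gamma$ has $(q,N)=1$, so a generating set can be chosen among the $\gamma_{q,a}$ with $(q,N)=1$ together with $T$ and $W$, where $\xi(\gamma_{q,a})=\overline{\xi(q)}$. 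For each $q$ coprime to $N$ we invoke the functional equation \eqref{e:fe_cq} for $c_q$: expanding the twist as a sum of additive twists, $\sum_n f_n c_q(n) e(nz)=f\big|\bigl(\sum_{(a,q)=1}\sm1&a/q\\&1\esm\bigr)$, applying Hecke's argument at level $Nq^2$, and rewriting the matrices that occur — using also $g=f\big|\sm&-1\\N&\esm$ — as products of elements $\gamma_{q,a}\in\Gamma_0(N)$ with explicit triangular factors having rational entries, one obtains a linear relation, uniform in $q$, among the vectors $f|\gamma_{q,a}$ and $f$ inside $V$. This is the content of Lemma~\ref{lem:cq}.

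The heart of the matter is to pass from these \emph{averaged} relations — each a sum over the reduced residues $a\pmod*{q}$ — to the individual equalities $f|\gamma_{q,a}=\overline{\xi(q)}f$ demanded by the generators, and this is precisely where the finite-index hypothesis is used. In the proof of Theorem~\ref{thm:smallN} one could isolate single terms for the few generators in play by ad hoc matrix identities; for general $N$ that is impossible, and one instead argues inside the finite-dimensional $V$. The translations $\sm1&*\\&1\esm$ with rational parameter occurring in Lemma~\ref{lem:cq}, together with the iteration underlying Lemma~\ref{lem:1qinv} — driven by their parabolic dynamics, against which $f$ has the controlled growth $f_n=O(n^\sigma)$ — should convert a single relation, for a suitably chosen $q$, into $f|\gamma_{q,a}=\overline{\xi(q)}f$, the scalar being forced by \eqref{e:fe_cq} itself. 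I expect the main obstacle to be exactly this: verifying that for each needed generator one can arrange a relation of the restricted shape that Lemma~\ref{lem:1qinv} can digest — equivalently, that inside $V$ the relations of Lemma~\ref{lem:cq} genuinely separate the generators, rather than leaving an ambiguity modulo the unknown complement of $\C f$. Assembling the resulting equalities over a generating set then yields $f|\gamma=\xi(\gamma)f$ for all $\gamma\in\Gamma_0(N)$, whence $f\in M_k(\Gamma_0(N),\xi)$, holomorphy at the cusps being automatic from modularity and $f_n=O(n^\sigma)$.
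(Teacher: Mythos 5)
Your setup is sound as far as it goes: the reduction to $f|T=f|W=f$, the extension of $f|\gamma=\xi(\gamma)f$ to the group $\Delta$ generated by $-I,T,W,H$ (using that $\gamma\mapsto f|\gamma$ is a right action), the finite-dimensionality of the span $V$ of the $f|\gamma$, and the invocation of Lemma~\ref{lem:cq} all match the paper's framework. But the proof has a genuine gap exactly where you say you ``expect the main obstacle'' to be: you never actually pass from the averaged relation \eqref{e:gz_cq} to the individual equalities $f|\gamma_{q,a}=\overline{\xi(q)}f$. The mechanism you gesture at --- feeding a single relation for a ``suitably chosen $q$'' into Lemma~\ref{lem:1qinv} via parabolic dynamics --- is the Conrey--Farmer device of Theorem~\ref{thm:smallN}, and it cannot work in this generality: the relation for a single $q$ is a sum of $\varphi(q)$ terms $f|[\gamma_{q,a}-\overline{\xi(q)}]\sm1&a/q\\&1\esm$, and Lemma~\ref{lem:1qinv} only digests the very special two-term configurations that arise from ad hoc matrix identities at small levels. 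Nothing in your argument separates the $\varphi(q)$ unknowns for a general finite-index $H$.

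What the paper does instead is qualitatively different and is the actual content of the theorem. One groups the terms of \eqref{e:gz_cq} according to the cosets $Hg_i$ of $H\backslash\Gamma_0(N)$, so the relation becomes $\sum_i f|[g_i-\xi(g_i)]\sum_\ell\sm1&a_{i\ell}/q\\&1\esm=0$ with only $h=[\Gamma_0(N):H]$ unknown functions $f|[g_i-\xi(g_i)]$. The finite-index hypothesis is used again to show each of these is invariant under $z\mapsto z+m$ for some fixed $m$, hence has a Fourier expansion $\sum_n\lambda_i(n)e(nz/m)$; the relation then gives, for each fixed $n$, one linear equation in $\lambda_1(n),\dots,\lambda_h(n)$ with coefficients $\sum_\ell e(na_{i\ell}/(qm))$. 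The decisive step is to vary $q$ over $h$ distinct primes, chosen by Dirichlet's theorem so that $\gamma_{q_i,a_i}\in Hg_i$ (making the diagonal coefficients nonempty sums), and to invoke Lemma~\ref{lem:key}: the resulting $h\times h$ determinant of sums of roots of unity $e(na/(q_jm))$ is nonzero, by a cyclotomic-degree induction. This forces $\lambda_i(n)=0$ for all $n\ne0$, so each $f|[g_i-\xi(g_i)]$ is constant, and the constant is killed by slashing with a non-triangular element of $g_i^{-1}Hg_i\cap H$. None of this linear-independence machinery --- the coset grouping, the periodicity of the differences, the choice of $h$ primes, and Lemma~\ref{lem:key} --- appears in your proposal, so the proof as written is incomplete.
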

\begin{proof}
We may assume without loss of generality that $H$ contains $T$ and $W$.
By Lemma~\ref{lem:cq}, for any prime $q\nmid N$, 
\begin{equation}\label{e:gz_cq}
	\sum_{a=1}^{q-1} f\bigg|
	\Bigl[\gamma_{q,a}-\overline{\xi(q)}\Bigr] \bpm 1 & \frac{a}{q} \\ 0 & 1\ebpm
	=0.
\end{equation}

Put $h=[\Gamma_0(N):H]$, and let 
$g_1,\ldots,g_h\in\Gamma_0(N)$ be coset representatives
for $H\backslash\Gamma_0(N)$. Replacing $g_i$ by $Wg_i$ if
necessary, we may assume without loss of generality that $g_i$ is
not upper triangular.
For each $\gamma_{q,a}\in \Gamma_0(N)$, 
there exists $i\in\{1,\ldots,h\}$ such that 
$\gamma_{q,a}\in H g_i$, so that $f|\gamma_{q,a}=f|g_i$.
Rearranging \eqref{e:gz_cq}, we get
$$
	\sum_{i=1}^h f\bigg| 
	\bigl[g_i - \xi(g_i)\bigr]
	\sum_{\ell=1}^{\kappa_i} \bpm 1 & \frac{a_{i\ell}}{q}\\ 0 & 1\ebpm 
	=0,
$$
where $\bigcup_{i=1}^h\{a_{i\ell}:\ell=1,\ldots,\kappa_i\}$ is a
disjoint partition of $\{1,\ldots,q-1\}$.

For each $i\in\{1,\ldots,h\}$, since
$[\Gamma_0(N): g_i^{-1} H g_i]=[\Gamma_0(N):H]<\infty$,
there exists $m_i\in\N$ such that 
$$
	g_i^{-1} H g_iT^{m_i} = g_i^{-1} H g_i. 
$$
Setting $m=\lcm(m_1,\ldots,m_h)$, we have
$g_i T^m \in H g_i$ for all $i$. 
Then $f|g_i T^{m} = f|g_i$, and thus
$f|[g_i-\xi(g_i)]$ has a Fourier expansion: 
\begin{equation}\label{e:gammai_Fourier}
	f\bigl|\bigl[g_i -\xi(g_i)\bigr]
	= \sum_{n\in\Z}\lambda_i(n)e\!\left(n\tfrac{z}{m}\right). 
\end{equation}
Therefore,
$$
	\sum_{i=1}^h f\bigg| \bigl[g_i-\xi(g_i)\bigr] 
	\sum_{\ell=1}^{\kappa_i} \bpm 1 & \tfrac{a_{i\ell}}{q}\\ 0 & 1\ebpm 
	=
	\sum_{n\in\Z}
	\sum_{i=1}^h \lambda_i(n) 
	\left(\sum_{\ell=1}^{\kappa_i} e\!\left(n \tfrac{a_{i\ell}}{qm}\right)\right)
	e\!\left(n\tfrac{z}{m}\right)
	=0,  
$$
i.e., for $n\in\Z$, 
\begin{equation}\label{e:linear0}
	\sum_{i=1}^h \lambda_i(n) \left(\sum_{\ell=1}^{\kappa_i} e\!\left(n \tfrac{a_{i\ell}}{qm}\right)\right)
	=
	0.
\end{equation}

Fix $n\in\Z\setminus\{0\}$.
By Dirichlet's theorem, we can choose distinct primes
$q_1,\ldots,q_h\nmid mnN$ and integers $a_1,\ldots,a_h$ such that
$\gamma_{q_i,a_i}\in\langle T\rangle g_i\subseteq Hg_i$ for each $i$.
Thus, from \eqref{e:linear0} for $q\in\{q_1,\ldots,q_h\}$, we
obtain a system of linear equations of the shape
\begin{equation}\label{e:linear}
	\sum_{i=1}^h \left(\sum_{\ell=1}^{\kappa_{i,j}}
	e\!\left(n\tfrac{a^{(j)}_{i\ell}}{q_j m}\right)\right)\lambda_i(n)=0
\quad\text{for }j\in\{1,\ldots,h\},
\end{equation}
with $\kappa_{i,i}>0$ for every $i\in\{1,\ldots,h\}$.
By Lemma~\ref{lem:key}, 
$$
	\det\left(\left[\sum_{\ell=1}^{\kappa_{i, j}} 
	e\!\left(n\tfrac{a^{(j)}_{i\ell}}{q_j m}\right)\right]_{1\leq i, j\leq h}\right)
	\neq 0, 
$$
so \eqref{e:linear} has only the trivial solution
$\lambda_1(n)=\ldots=\lambda_h(n)= 0$. 

Since $n\in\Z\setminus\{0\}$ was arbitrary,
it follows from \eqref{e:gammai_Fourier} that
$f|[g_i-\xi(g_i)]$ is a constant, say $C_i$.
Since $g_i^{-1}Hg_i\cap H$ has finite index in $\SL_2(\Z)$,
there exists
$\gamma=\sm a&b\\c&d\esm\in g_i^{-1}Hg_i\cap H$ with $c\ne0$.
Then $C_i=C_i|\gamma=(cz+d)^{-k}C_i$. Since $k>0$, we must have $C_i=0$,
i.e.\ $f|g_i=\xi(g_i)f$. This concludes the proof.
\end{proof}

\begin{theorem}\label{thm:absolutef}
Assume the hypotheses of Conjecture~\ref{main}, and suppose that
there is a congruence subgroup $H<\Gamma_0(N)$ such that
$\bigl|(f|\gamma)(z)\bigr|=|f(z)|$ for all $\gamma\in H$.
Then $f\in M_k(\Gamma_0(N),\xi)$.
\end{theorem}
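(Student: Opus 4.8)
The plan is to exploit the rigidity of holomorphic functions of constant modulus to upgrade the hypothesis ``$|f|$ is modular for $H$'' to the statement that $f$ is modular for $H$ with a \emph{unitary multiplier system} $\upsilon$, to show that $\upsilon$ has finite image — this is the heart of the matter, and is where Lemma~\ref{lem:cq} enters — and then to invoke Theorem~\ref{thm:finiteindex}. First I would dispose of the case $f\equiv0$, which is trivial. Assuming $f\not\equiv0$, I note that for each $\gamma\in H$ the quotient $(f|\gamma)/f$ is a priori meromorphic on $\HH$ with modulus $1$ away from its zeros and poles; since its modulus equals $1$ it has neither, so it is holomorphic and nowhere vanishing, and a holomorphic function of constant modulus is constant. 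Hence $f|\gamma=\upsilon(\gamma)f$ with $|\upsilon(\gamma)|=1$, and because $|$ is a right action, $\upsilon\colon H\to\{z\in\C:|z|=1\}$ is a homomorphism. Replacing $H$ by $\langle H,T,W\rangle$ — still a congruence subgroup of finite index in $\Gamma_0(N)$, on which $f|\gamma$ is a priori a scalar multiple of $f$ for every $\gamma$ (it is so on each of the generators) — I may assume $T,W\in H$ and, since $f|T=f|W=f$, that $\upsilon(T)=\upsilon(W)=1$; put $h=[\Gamma_0(N):H]$.

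The key observation is that it suffices to prove that $\upsilon$ has finite image. Indeed, in that case $H':=\ker\upsilon\cap\Gamma_1(N)$ has finite index in $\Gamma_1(N)$ and satisfies $f|\gamma=f$ for all $\gamma\in H'$, so Theorem~\ref{thm:finiteindex} gives $f\in M_k(\Gamma_0(N),\xi)$. To bound the image of $\upsilon$ I would follow the proof of Theorem~\ref{thm:finiteindex}, the new feature being that $f|\gamma$ is now a $\upsilon$-multiple of $f|g_i$ rather than equal to it. Fix non-upper-triangular coset representatives $g_1,\dots,g_h$ of $H\backslash\Gamma_0(N)$ and a common ``period'' $m$ with $g_iT^mg_i^{-1}\in H$ for all $i$; then $(f|g_i)(z)=\sum_{n\in\Z}b_i(n)\,e\bigl((n+\mu_i)z/m\bigr)$ for some $\mu_i\in[0,1)$ with $e(\mu_i)=\upsilon(g_iT^mg_i^{-1})$, the bound $f_n=O(n^\sigma)$ ensuring that only exponents bounded below occur. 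For a prime $q\nmid mN$, partitioning $\{1,\dots,q-1\}$ by which coset $Hg_i$ contains $\gamma_{q,a}$ and using $f|\gamma_{q,a}=\upsilon(\gamma_{q,a}g_i^{-1})\,f|g_i$, relation~\eqref{e:gz_cq} becomes
\[
\sum_{i=1}^{h}(f|g_i)\,\bigg|\,\Bigl(\sum_{a\in A_i^{(q)}}\upsilon\bigl(\gamma_{q,a}g_i^{-1}\bigr)\bpm1&a/q\\0&1\ebpm\Bigr)
=\overline{\xi(q)}\,f\,\bigg|\,\Bigl(\sum_{a=1}^{q-1}\bpm1&a/q\\0&1\ebpm\Bigr),
\]
with $A_i^{(q)}\neq\emptyset$ for every $i$ once $q$ is chosen, by Dirichlet's theorem, so that $\gamma_{q,a}\in\langle T\rangle g_i$ for a prescribed $a$.

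Comparing Fourier coefficients of the two sides at each exponent and running over $h$ suitably chosen primes $q$, I obtain at every exponent a linear system in the $b_i(\,\cdot\,)$ whose coefficient matrix is a $\upsilon$-twisted analogue of the Vandermonde-type matrix of Lemma~\ref{lem:key}. The hard part will be establishing that these systems are nonsingular: their entries now involve the \emph{a priori} transcendental numbers $\upsilon(\gamma_{q,a}g_i^{-1})$ rather than the sums of roots of unity handled by Lemma~\ref{lem:key}, so a genuinely new nonvanishing input is required. Granting this, the vanishing of $b_i(n)$ at all exponents not lying in $\Z_{>0}$ forces $\mu_i=0$ for every $i$ — for otherwise $f|g_i$ would have no exponent in $\Z_{>0}$, hence $f|g_i\equiv0$ and so $f\equiv0$, contrary to assumption — and a second pass through the same argument, now with genuine integral Fourier expansions vanishing at the cusps, pins down $\upsilon=\xi|_H$. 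Since $\xi$ is a Dirichlet character, $\upsilon$ then has finite image, and Theorem~\ref{thm:finiteindex} completes the proof.
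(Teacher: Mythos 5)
Your opening move is the same as the paper's: since $(f|\gamma)/f$ is meromorphic of constant modulus $1$, it is a constant $\upsilon(\gamma)$ of modulus $1$, and $\upsilon$ is a homomorphism trivial on $T$ and $W$. But your plan for showing $\upsilon$ is (essentially) trivial has a genuine gap, and you acknowledge it yourself: the $\upsilon$-twisted analogue of the linear system from Theorem~\ref{thm:finiteindex} has entries involving the a priori arbitrary unimodular numbers $\upsilon(\gamma_{q,a}g_i^{-1})$, and Lemma~\ref{lem:key} — whose proof is a Galois-theoretic argument about sums of roots of unity of controlled degree — gives no handle on such a determinant. Without that nonvanishing input the argument does not close, and the subsequent steps (forcing $\mu_i=0$, the ``second pass'') are built on it. As written, the proposal is a plan with its central step missing.

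The paper's proof sidesteps linear algebra entirely by a positivity trick, and it is worth seeing how. First, since $\Gamma_1(N)$ is generated by $\{T,W\}\cup\Gamma(M)$ (where $M$ is the level of $H$), the multiplier $\upsilon$ is defined on all of $\Gamma_1(N)$, not just on $\langle H,T,W\rangle$; this reduction is stronger than yours. Next, Theorem~\ref{thm:almostall} supplies a single prime $q\equiv1\pmod*{N}$ such that $\{T,W,\gamma_{q,a}:1\le a<q\}$ generates $\Gamma_1(N)$, and Lemma~\ref{lem:nonvanishing} (a short argument comparing the functional equations for $c_1$ and $c_q$) supplies an index $m$ with $q\mid m$ and, after possibly swapping $f$ and $g$, $f_m\ne0$. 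Substituting $f|\gamma_{q,a}=\upsilon(\gamma_{q,a})f$ into the relation of Lemma~\ref{lem:cq} and extracting the $m$th Fourier coefficient, the exponentials $e(am/q)$ all equal $1$, leaving $\sum_a\bigl[\upsilon(\gamma_{q,a})-1\bigr]=0$. Since each $\upsilon(\gamma_{q,a})$ lies on the unit circle, every summand has nonpositive real part, vanishing only when $\upsilon(\gamma_{q,a})=1$; hence all the multipliers are $1$, $f|\gamma=f$ on $\Gamma_1(N)$, and Theorem~\ref{thm:finiteindex} finishes. If you want to salvage your route, you should replace the twisted-determinant step with this kind of convexity/positivity argument at a single well-chosen Fourier coefficient — that is exactly the extra leverage the constant-modulus hypothesis provides over the finite-index hypothesis of Theorem~\ref{thm:finiteindex}.
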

\begin{proof}
If $f=0$ then the conclusion is trivially true, so from now on assume $f\neq 0$.
Let $M$ denote the level of $H$, so that $H\supseteq\Gamma(M)$.
Since $f|T=f|W=f$ and $\Gamma_1(N)$ is generated by
$\{T,W\}\cup\Gamma(M)$,
we may assume without loss of generality that
$H\supseteq\Gamma_1(N)$.
By Theorem~\ref{thm:almostall}, there exists a prime
$q\equiv1\pmod*{N}$ such that $\Gamma_1(N)$ is generated by
$\{T,W,\gamma_{q,a}:1\le a<q\}$.
By Lemma~\ref{lem:nonvanishing}, there exists $m\in\N$ such that $q\mid m$ and
$\{f_m,g_m\}\ne\{0\}$. Since $\sm &-1\\N&\esm$ normalizes $\Gamma_1(N)$,
we may swap the roles of $f$ and $g$ if necessary, so as to assume that
$f_m\ne0$.

For any $\gamma\in\Gamma_1(N)$, the function
$(f|\gamma)(z)/f(z)$ is meromorphic on $\HH$
and has modulus $1$; by the maximum modulus principle, it must be
a constant, say $\epsilon(\gamma)$.
By Lemma~\ref{lem:cq}, we have
$$
0=
\sum_{\substack{a\pmod*{q}\\(a,q)=1}}
f\left|\bigl[\gamma_{q,a}-1\bigr]\bpm 1&a/q\\&1\ebpm\right.
=\sum_{\substack{a\pmod*{q}\\(a,q)=1}}
\bigl[\epsilon(\gamma_{q,a})-1\bigr]f\left|\bpm 1&a/q\\&1\ebpm\right..
$$
Considering the Fourier expansion, this implies that
$$
\sum_{\substack{a\pmod*{q}\\(a,q)=1}}
\bigl[\epsilon(\gamma_{q,a})-1\bigr]
f_ne\!\left(\frac{an}q\right)=0
\quad\text{for all }n.
$$
In particular, taking $n=m$, we have
$$
\sum_{\substack{a\pmod*{q}\\(a,q)=1}}
\bigl[\epsilon(\gamma_{q,a})-1\bigr]=0,
$$
and since $|\epsilon(\gamma_{q,a})|=1$ for every $a$, it follows that
$\epsilon(\gamma_{q,a})=1$.
Therefore, $f|\gamma=f$ for all $\gamma\in\Gamma_1(N)$. Applying
Theorem~\ref{thm:finiteindex} with $H=\Gamma_1(N)$,
we conclude that $f\in M_k(\Gamma_0(N),\xi)$.
\end{proof}

\begin{theorem}\label{thm:commutator}
Assume the hypotheses of Conjecture~\ref{main}.
Suppose that $N$ is prime and that
$f|\gamma_1\gamma_2=f|\gamma_2\gamma_1$ for every pair
$\gamma_1,\gamma_2\in\Gamma_0(N)$. Then $f\in M_k(\Gamma_0(N),\xi)$.
\end{theorem}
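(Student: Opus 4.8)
The plan is to first reformulate the hypothesis group-theoretically, then dispose of the genus-zero levels by invoking Theorem~\ref{thm:finiteindex}, and finally handle the positive-genus levels by a Fourier-coefficient argument modelled on Theorems~\ref{thm:finiteindex} and~\ref{thm:absolutef}. The hypothesis $f|\gamma_1\gamma_2=f|\gamma_2\gamma_1$ says exactly that $f|\delta=f$ for every commutator $\delta=\gamma_1\gamma_2\gamma_1^{-1}\gamma_2^{-1}$, i.e.\ that $f$ is fixed by the commutator subgroup $\Gamma_0(N)'=[\Gamma_0(N),\Gamma_0(N)]$; together with $f|T=f|W=f$ this shows that $f$ is fixed by $K:=\langle T,W,\Gamma_0(N)'\rangle$, which is the preimage under $\Gamma_0(N)\to\Gamma_0(N)^{\mathrm{ab}}$ of the subgroup generated by the classes of $T$ and $W$. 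Hence $f|\gamma$ depends only on the image $[\gamma]$ of $\gamma$ in the finitely generated abelian group $A:=\Gamma_0(N)^{\mathrm{ab}}/\langle[T],[W]\rangle$, and, since $T\in K$, every $f|\gamma$ is $T$-invariant and so has a Fourier expansion $f|\gamma=\sum_{n\in\Z}c_{[\gamma]}(n)e(nz)$. For $N$ prime, $\Gamma_0(N)$ has exactly two cusps, with parabolic generators $T$ and $W$, and from the corresponding presentation one finds $A\cong\Z^{2g}\oplus(\text{finite group})$, where $g$ is the genus of $X_0(N)$. We may assume $f\neq0$, the statement being trivial otherwise.

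If $g=0$, then $A$ is finite. Since $T,W\in\Gamma_1(N)$ and $\Gamma_0(N)/\Gamma_1(N)$ is abelian, $K\subseteq\Gamma_1(N)$, so $K$ is a finite-index subgroup of $\Gamma_1(N)$ fixing $f$, and Theorem~\ref{thm:finiteindex} applied with $H=K$ gives $f\in M_k(\Gamma_0(N),\xi)$. So the remaining, and essential, case is $g\ge1$, where $A$ is infinite and Theorem~\ref{thm:finiteindex} is not directly available.

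For $g\ge1$ I would run the Fourier-coefficient argument of Theorems~\ref{thm:finiteindex} and~\ref{thm:absolutef} over the index set $A$. Let $q\ne N$ be prime; then $\gamma_{q,a}$ has bottom-right entry $\equiv q^{-1}\pmod*{N}$, so $\xi(\gamma_{q,a})=\overline{\xi(q)}$ for every $a$. Comparing the coefficients of $e(nz)$ on the two sides of Lemma~\ref{lem:cq} in the form \eqref{e:gz_cq}, and using that $f|\gamma$ depends only on $[\gamma]$, one obtains
$$
	\sum_{a=1}^{q-1}\lambda_{[\gamma_{q,a}]}(n)\,e\!\left(\frac{an}{q}\right)=0
	\qquad\text{for every prime }q\ne N\text{ and every }n\in\Z,
$$
where $\lambda_\beta(n):=c_\beta(n)-\xi(\beta)f_n$, with the conventions $f_n:=0$ for $n\le0$ and $\xi(\beta):=\xi(\gamma)$ for any $\gamma$ with $[\gamma]=\beta$ (well defined, since $\xi$ factors through $\Gamma_0(N)^{\mathrm{ab}}$); note $\lambda_0(n)=0$ for all $n$. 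The goal is to deduce $\lambda_\beta(n)=0$ for every $\beta\in A$ and every $n\ne0$, which is exactly the assertion that $f|\gamma-\xi(\gamma)f$ is a constant $C_\gamma$ for every $\gamma$. To extract this, I would determine, for each prime $q$, which classes $[\gamma_{q,a}]\in A$ occur and for which residues $a$: this is controlled by the continued-fraction expansion of $a/q$, which exhibits $\gamma_{q,a}$ as an explicit word in the generators of $\Gamma_0(N)$ and hence determines its class in $A$. Grouping the displayed identity by class gives, for each $n$, a finite linear relation among the $\lambda_\beta(n)$; fixing $n\ne0$ and a target class, one then selects finitely many auxiliary primes $q$ by Dirichlet's theorem (as in the proofs of Theorems~\ref{thm:finiteindex} and~\ref{thm:absolutef}) so that the resulting linear system has a coefficient matrix of the shape handled by Lemma~\ref{lem:key}, whose non-vanishing determinant forces the relevant $\lambda_\beta(n)$ to vanish.

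Once $f|\gamma-\xi(\gamma)f$ is known to be a constant $C_\gamma$ for each $\gamma$, it remains to show $C_\gamma=0$, and for this I would use that the element $[T,W]=TWT^{-1}W^{-1}=\bpm 1+N+N^2&-N\\ N^2&1-N\ebpm$ lies in $\Gamma_0(N)'\subseteq K$: then $f|[T,W]=f$ and the image of $[T,W]$ in $A$ is $0$, so $C_\gamma|[T,W]=f|\gamma[T,W]-\xi(\gamma)f|[T,W]=f|\gamma-\xi(\gamma)f=C_\gamma$, and since $C_\gamma$ is constant this reads $C_\gamma=(N^2z+1-N)^{-k}C_\gamma$ for all $z$, whence $C_\gamma=0$ as $k>0$. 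Therefore $f|\gamma=\xi(\gamma)f$ for every $\gamma\in\Gamma_0(N)$, and together with the bound $f_n=O(n^\sigma)$ (controlling the behaviour at the cusps) this gives $f\in M_k(\Gamma_0(N),\xi)$. The main obstacle will be the third step: for $g\ge1$ the index set $A$ is infinite, so there is no finite coset bookkeeping as in Theorem~\ref{thm:finiteindex}, and one must combine the explicit continued-fraction description of $\gamma_{q,a}$ with a careful choice of auxiliary primes to reduce, one nonzero Fourier mode at a time, to a finite system amenable to Lemma~\ref{lem:key}; it is precisely here that the primality of $N$—which gives just two cusps and hence a clean presentation of $\Gamma_0(N)$ and of $A$—is used.
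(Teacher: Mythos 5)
Your reduction of the hypothesis to invariance under $K=\langle T,W,\Gamma_0(N)'\rangle$, your disposal of the genus-zero levels via Theorem~\ref{thm:finiteindex}, and your final step (killing the constant $C_\gamma$ by slashing with an element of $K$ having nonzero lower-left entry) all match the paper's proof in substance. But the central step --- deducing $\lambda_\beta(n)=0$ from the relations \eqref{e:linear0} when the index set $A$ is infinite --- is precisely the part you leave unresolved (you call it ``the main obstacle''), and it is the heart of the theorem, so there is a genuine gap. What is missing is a quantitative comparison between the number of unknowns and the number of equations. The paper's Lemma~\ref{lem:log-gen} shows that for $N$ prime every $\gamma_{q,a}$ is, up to sign, a word in $T^{\pm1}$, $W^{\pm1}$ and the \emph{finitely many} matrices $\gamma_{r,1}^{-1}$ with $2\le|r|<\tfrac12N$, using at most $\log_2 q$ of the latter; since the quotient by $H$ is abelian and absorbs $T$ and $W$, the class of $\gamma_{q,a}$ is $(-I)^{\epsilon}\prod_r\gamma_{r,1}^{-e_r}$ with $\sum_r e_r\le\log_2 q$. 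Hence among all $\gamma_{q,a}$ with $q\le X$ there are at most $2(1+\log_2X)^{N-3}$ distinct classes, while Dirichlet supplies $\gg X/\log X$ usable primes; for large $X$ the system is overdetermined. This polylogarithmic bound is exactly what replaces the finite coset bookkeeping of Theorem~\ref{thm:finiteindex}; your continued-fraction remark points in the right direction but does not deliver it.

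A second, smaller omission: even once there are finitely many classes, Lemma~\ref{lem:key} applies only to a square subsystem with nonempty diagonal sets, and you must guarantee that the one class you are trying to control survives the passage to that subsystem. The paper arranges this by fixing $s$ and restricting to primes $q\equiv s^{-1}\pmod*{N}$, so that $H\gamma_{q,-1}=H\gamma_{s,1}^{-1}$ and the target class occurs in \emph{every} equation; the square subsystem produced by Hall's marriage theorem (Lemma~\ref{lem:diagonal}) then necessarily contains that column. Without such a device, ``selecting primes so that the matrix has the shape handled by Lemma~\ref{lem:key}'' is an assertion rather than an argument. The remaining details of your write-up (the identity $\xi(\gamma_{q,a})=\overline{\xi(q)}$, the use of $[T,W]$ in place of the paper's $W$ to annihilate the constant, and the reduction of holomorphy at the cusps to the growth bound on $f_n$) are correct.
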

\begin{proof}
Let $H$ be the smallest subgroup of $\Gamma_0(N)$ containing
$T$, $W$ and all commutators $\gamma_1\gamma_2\gamma_1^{-1}\gamma_2^{-1}$
for $\gamma_1,\gamma_2\in\Gamma_0(N)$. Then $H$ is a normal subgroup
with abelian quotient $H\backslash\Gamma_0(N)$,
and $f|\gamma=f$ for all $\gamma\in H$.
If $N\in\{2,3\}$ then $\langle H,-I\rangle=\Gamma_0(N)$ and there
is nothing to prove, so we assume henceforth that $N\ge5$.

Let $R=\{r\in\Z:2\le|r|<\frac12N\}$, and for each $r\in R$,
fix a matrix $\gamma_{r,1}$ with top row $\sm r&-1\esm$.
Then, by Lemma~\ref{lem:log-gen}, for any prime $q\nmid N$ and $a$
coprime to $q$, we have
$$
\gamma_{q,a}=\pm\prod_{i=1}^l\tau_i,
$$
where each $\tau_i$ is an element of
$\{T,T^{-1},W,W^{-1},\gamma_{r,1}^{-1}:r\in R\}$.
Since $H\backslash\Gamma_0(N)$ is abelian, we are free to permute the
$\tau_i$ without changing the coset $H\prod\tau_i$.
Hence, since $H$ contains $\langle T,W\rangle$, we may write
$$
H\gamma_{q,a}=H(-I)^\epsilon\prod_{r\in R}\gamma_{r,1}^{-e_r},
$$
for some $\epsilon\in\{0,1\}$ and non-negative integers $e_r$ (depending
on $q$ and $a$), satisfying $\sum_{r\in R}e_r\le\log_2q$.

Now, fix $s\in R$, $n\in\Z\setminus\{0\}$ and $X\in\N$,
and let $Q=Q(s,n,X)$ denote the set of primes $q$
satisfying $qs\equiv 1\pmod*{N}$, $q\nmid n$ and $q\le X$. As in the
proof of Theorem~\ref{thm:finiteindex}, we consider \eqref{e:gz_cq}
for all primes $q\in Q$. Let $g_1,\ldots,g_h$ be a minimal set
of representatives for the cosets $H\gamma_{q,a}$ of all matrices
occurring there.  By the above, we may take each $g_i$ of the form
$(-I)^\epsilon\prod_{r\in R}\gamma_{r,1}^{-e_r}$ with
$\epsilon\in\{0,1\}$, $e_r\ge0$ and
$\sum_{r\in R}e_r\le\log_2X$.
In particular, $H\gamma_{s,1}^{-1}=H\gamma_{q,-1}$ for every $q\in Q$, so we
may take $g_1=\gamma_{s,1}^{-1}$.  By Dirichlet's theorem, we have
$\#Q\gg X/\log{X}$, and thus $h\le2(1+\log_2X)^{N-3}\le\#Q$ for all
sufficiently large $X$.

For each $i\in\{1,\ldots,h\}$, we have $f|g_iT=f|Tg_i=f|g_i$, so
$f|[g_i-\xi(g_i)]$ has a Fourier expansion as in
\eqref{e:gammai_Fourier}, with $m=1$. In turn, this leads to the system
of linear equations \eqref{e:linear}, where we take
$\{q_j\}$ to be any subset of $Q$ of cardinality $h$.
Applying Lemma~\ref{lem:diagonal}, by appropriate permutation of the rows and
columns we can select a square subsystem for which the diagonal
entries are non-zero. Since the coset $Hg_1$ occurs in
every row, the column $i=1$ is necessarily one of the variables in
the subsystem.

Hence, by Lemma~\ref{lem:key}, we have $\lambda_1(n)=0$.
Since $n\in\Z\setminus\{0\}$ was arbitrary, we thus have that
$f|[\gamma_{s,1}^{-1}-\xi(s)]$ is a constant, say $C$.
Clearly $C|\gamma=C$ for all
$\gamma\in\gamma_{s,1}H\gamma_{s,1}^{-1}\cap H=H$.
Taking $\gamma=W$, it follows that
$C=0$, whence $f|\gamma_{s,1}^{-1}=\xi(s)f$.
Finally, Lemma~\ref{lem:log-gen} implies that $\Gamma_0(N)$
is generated by $-I$, $T$, $W$ and $\gamma_{s,1}$ for $s\in R$, so
$f|\gamma=\xi(\gamma)f$ for all $\gamma\in\Gamma_0(N)$.
\end{proof}

\begin{theorem}\label{thm:singleq}
Assume the hypotheses of Conjecture~\ref{main}. There is a set $Q$
of prime numbers such that
\begin{itemize}
\item[(i)]$Q$ has density $1$ in the set of all primes, and
\item[(ii)]if there exists $q\in Q$ such that
the multiplicative twists $\Lambda_f(s,\chi)$
and $\Lambda_g(s,\overline{\chi})$, for all primitive characters
$\chi\pmod*{q}$, continue to entire functions of finite order and satisfy
the functional equation
\begin{equation}\label{eq:femult}
\Lambda_f(s,\chi)=i^k\xi(q)\chi(N)q^{-1}\tau(\chi)^2(Nq^2)^{\frac12-s}
\Lambda_g(1-s,\overline{\chi}),
\end{equation}
then $f\in M_k(\Gamma_0(N),\xi)$.
\end{itemize}
In particular, for each $N$ in the following table, the set $Q$
contains every prime $q\nmid N$ in the indicated interval.
\begin{center}
\begin{tabular}{rr|rr}
$N$ & $q$ &
$N$ & $q$\\ \hline
$10$ & $(11,10^9)$ &
$18$ & $(53,10^9)$\\
$12$ & $(35,10^9)$ &
$19$ & $(37,10^9)$\\
$13$ & $(5,10^9)$ &
$20$ & $(79,10^9)$\\
$14$ & $(43,10^9)$ &
$21$ & $(83,10^9)$\\
$16$ & $(47,10^9)$ &
$22$ & $(43,10^9)$\\
\end{tabular}
\end{center}
\end{theorem}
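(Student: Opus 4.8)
The plan is to take $Q$ to be the set of primes $q\nmid N$ for which the group $G_q:=\langle T,W,\gamma_{q,a}:1\le a<q\rangle$ has finite index in $\Gamma_0(N)$, to show that whenever $q\in Q$ satisfies the hypotheses of (ii) one has $f|\gamma_{q,a}=\overline{\xi(q)}f$ for every $a$ coprime to $q$, and then to invoke Theorem~\ref{thm:finiteindex}. The analytic input is a single-modulus instance of Weil's lemma, which becomes available precisely because the hypotheses of Conjecture~\ref{main} supply the Ramanujan-sum functional equation \eqref{e:fe_cq} for $q$ in addition to the twisted functional equations of (ii). The step I expect to be the main obstacle is the purely group-theoretic assertion that $Q$ has density $1$.

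For the analytic step, fix a prime $q\nmid N$ for which the hypotheses of (ii) hold. Since $q$ is prime, every nonprincipal character $\chi\pmod q$ is primitive, so $c_\chi=\tau(\chi)\overline\chi$; using the Gauss-sum identity $\tau(\chi)\tau(\overline\chi)=\chi(-1)q$ one checks that \eqref{eq:femult} is exactly \eqref{eq:fe_cchi} for such $\chi$, while for the principal character $\chi_0$ one has $c_{\chi_0}=c_q$ and \eqref{e:fe_cq} is \eqref{eq:fe_cchi} for $\chi_0$. Hence \eqref{eq:fe_cchi} holds for \emph{every} character modulo $q$. For $b$ coprime to $q$ write $\psi_b(n):=e(bn/q)$ and use the Fourier expansion $\psi_b=\frac{1}{q-1}\sum_{\chi\pmod q}\overline{\chi(b)}\,c_\chi$ on $(\Z/q\Z)^\times$ (which involves the term $\chi=\chi_0$, hence the Ramanujan-sum functional equation). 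Forming the corresponding linear combination of the equations \eqref{eq:fe_cchi} collapses the character-dependent factors and yields
$$
\Lambda_f(s,\psi_b)=i^k\xi(q)(Nq^2)^{\frac12-s}\Lambda_g(1-s,\psi_{\overline{-Nb}}),
$$
with no undetermined constants, where $\overline{-Nb}$ is an inverse of $-Nb$ modulo $q$.

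The two sides of this identity are the completed Dirichlet series of $f|\sm 1&b/q\\0&1\esm$ and of $g|\sm 1&\overline{-Nb}/q\\0&1\esm$, and are entire of finite order, being linear combinations of the $\Lambda_f(s,c_\chi)$, $\Lambda_g(s,c_\chi)$. So Hecke's argument (as in the reduction preceding Theorem~\ref{thm:smallN}, \cite[Theorem~4.3.5]{miyake}) converts it, after substituting $g=f|\sm 0&-1\\N&0\esm$ and a short matrix manipulation, into an identity $f|(q\delta)=\xi(q)f$ for an explicit $\delta\in\Gamma_0(N)$ whose inverse has top row $\sm q&-b\esm$; since $h|(qI)=h$ and $f|\gamma$ depends only on the top row of $\gamma$, this says $f|\gamma_{q,b}=\overline{\xi(q)}f$. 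Doing this for every $b$ coprime to $q$, and recalling $f|T=f|W=f$, we find that $f$ transforms on each of the generators $T,W,\gamma_{q,a}$ of $G_q$ by the corresponding value of the homomorphism $\xi$ on $\Gamma_0(N)$; hence $f|\gamma=\xi(\gamma)f$ for all $\gamma\in G_q$, and in particular $f|\gamma=f$ for all $\gamma\in G_q\cap\Gamma_1(N)$. When $q\in Q$ this last group has finite index in $\Gamma_1(N)$ and contains $T$ and $W$, so Theorem~\ref{thm:finiteindex} gives $f\in M_k(\Gamma_0(N),\xi)$.

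It remains to prove that $Q$ has density $1$, and this is a question about $\SL_2(\Z)$ with no reference to $f$: one must show that for a density-$1$ set of primes $q\nmid N$, the elements of $\Gamma_0(N)$ with top row $\sm q&\cdot\esm$ together with $T$ and $W$ generate a finite-index subgroup. Since all of these lie in the preimage $P_q$ of $\langle q\bmod N\rangle$ under the reduction $\Gamma_0(N)\to(\Z/N\Z)^\times$, and $G_q\subseteq P_q$, it suffices to prove $[P_q:G_q]<\infty$ for almost all $q$; I expect this to require an effective generation argument of the kind developed in Section~\ref{sec:generating} (compare Theorem~\ref{thm:almostall}). For the levels $N$ listed in the table one instead verifies directly, with Sage, that $G_q=P_q$ for every prime $q\nmid N$ in the stated interval; this exhibits those $q$ as members of $Q$ and yields the explicit entries. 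The conjectural extension of the argument to all sufficiently large $q$ is exactly the assertion that $G_q=P_q$ for all large $q$.
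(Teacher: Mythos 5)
Your proposal is correct and follows essentially the same route as the paper: the paper takes $Q=\{q:H_q\supseteq\Gamma_1(N)\}$ (your $G_q$ coincides with the paper's $H_q$, and your finite-index condition is a harmless weakening), cites Theorems~\ref{thm:almostall} and \ref{thm:explicitq} for the density and the table exactly as you defer to Section~\ref{sec:generating}, derives $f|\gamma_{q,a}=\overline{\xi(q)}f$ from the twists, and concludes via Theorem~\ref{thm:finiteindex} with $H=\Gamma_1(N)$. The only cosmetic difference is in the analytic step: the paper invokes \cite[Lemmas~4.3.9 and 4.3.13]{miyake} to show the expressions $f|[\gamma_{q,a}-\overline{\xi(q)}]\sm 1&a/q\\&1\esm$ are independent of $a$ and then kills them with Lemma~\ref{lem:cq}, whereas you Fourier-invert all $q$ functional equations (principal character included) to get each additive-twist functional equation directly --- the same inputs combined in a slightly different order.
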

\begin{proof}
Let $Q$ be the set of primes $q\nmid N$ such that
$H_q\supseteq\Gamma_1(N)$, in the notation of
Section~\ref{sec:generating}. By Theorem~\ref{thm:almostall},
$Q$ has density $1$ in the set of all primes, so (i) holds, and the fact
that $Q$ contains the numbers indicated in the table is the content of
Theorem~\ref{thm:explicitq}.

Let $q\in Q$.
Then by \cite[Lemmas~4.3.9 and 4.3.13]{miyake}, the 
assumed analytic properties of $\Lambda_f(s,\chi)$ and
$\Lambda_g(s,\overline{\chi})$ described in (ii), together with the
functional equation \eqref{eq:femult} for all primitive $\chi\pmod*{q}$,
imply the equality
$$
f\bigg|\Bigl[\gamma_{q,a}-\overline{\xi(q)}\Bigr]
\bpm 1 & \frac{a}{q} \\ 0 & 1\ebpm
=f\bigg|\Bigl[\gamma_{q,b}-\overline{\xi(q)}\Bigr]
\bpm 1 & \frac{b}{q} \\ 0 & 1\ebpm
$$
for any integers $a,b$ coprime to $q$.
By Lemma~\ref{lem:cq}, it follows that
$f|\gamma_{q,a}=\overline{\xi(q)}f$ for every $a$ coprime to $q$.
By the definition of $Q$, we thus have
$f|\gamma=\xi(\gamma)f$ for every $\gamma\in H_q\supseteq\Gamma_1(N)$.
Applying Theorem~\ref{thm:finiteindex} with $H=\Gamma_1(N)$,
we conclude that $f\in M_k(\Gamma_0(N),\xi)$.
\end{proof}

\section{Generating $\Gamma_1(N)$}\label{sec:generating}
In this section, we consider the question of when the elements of
$\Gamma_0(N)$ with a fixed upper-left entry generate a
subgroup containing $\Gamma_1(N)$. By the proof of
Theorem~\ref{thm:singleq}, any such upper-left entry gives sufficient
conditions to imply modularity using twists of a single modulus.

For any $q\in\N$ coprime to $N$, let $H_q$ denote the subgroup of
$\Gamma_0(N)$ generated by the matrices
$$
\left\{\begin{pmatrix}A&B\\C&D\end{pmatrix}\in\Gamma_0(N):A=q\right\}.
$$
\begin{conjecture}
There exists $q_0=q_0(N)$ such that
$H_q\supseteq\Gamma_1(N)$ for every $q\ge q_0$ coprime to $N$.
\end{conjecture}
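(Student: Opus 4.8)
The plan is to prove an effective form of the generation result behind Theorem~\ref{thm:almostall}, valid for \emph{every} sufficiently large $q$ rather than merely for a set of density~$1$. I would start from the standard short exact sequence
$$
1\longrightarrow\Gamma_1(N)\longrightarrow\Gamma_0(N)\xrightarrow{\ \pi\ }(\Z/N\Z)^\times\longrightarrow1,
$$
where $\pi$ sends $\sm a&b\\c&d\esm$ to the class of $a$ modulo $N$. Every generator of $H_q$ maps to $q$ under $\pi$, so $\pi(H_q)=\langle q\rangle$ and $H_q\subseteq\pi^{-1}(\langle q\rangle)$, with equality if and only if $H_q\supseteq\Gamma_1(N)$. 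Thus the conjecture is equivalent to showing $H_q\cap\Gamma_1(N)=\Gamma_1(N)$ once $q\ge q_0(N)$.

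Next I would build an explicit reservoir inside $H_q$. The key first observation is that $T\in H_q$ for \emph{every} $q$ coprime to $N$: for any $a$ coprime to $q$ the matrices $\gamma_{q,a}$ and $\gamma_{q,a}T^{\pm1}$ all have upper-left entry $q$, hence lie in the generating set of $H_q$, so $\gamma_{q,a}^{-1}\cdot\bigl(\gamma_{q,a}T^{\pm1}\bigr)=T^{\pm1}\in H_q$. Therefore $H_q$ contains $T$, all the $\gamma_{q,a}$ with $\gcd(a,q)=1$, all their inverses, every word in these, and --- since $\Gamma_1(N)\trianglelefteq\Gamma_0(N)$ --- the conjugates $\gamma_{q,a}T^m\gamma_{q,a}^{-1}$ and the elements $\gamma_{q,a}\gamma_{q,b}^{-1}$, which lie in $H_q\cap\Gamma_1(N)$. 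Now fix once and for all a finite generating set $\delta_1,\dots,\delta_r$ of $\Gamma_1(N)$ whose entries are bounded by a constant $C(N)$ (for instance from a Farey symbol for $\Gamma_1(N)$). By the above it suffices to express each $\delta_j$ as a word in these elements, and I would attempt this by a continued-fraction/Euclidean descent: the moves available inside $H_q$ are left and right multiplication by $T^m$ and by elements with top row $(q,b)$, where $b$ may be taken to be \emph{any} integer coprime to $q$ and the matrix completed inside $\Gamma_0(N)$ by the Chinese Remainder Theorem; these should suffice to reduce each $\delta_j$ to the identity whenever $q$ exceeds the entries occurring along the way, which are controlled in terms of $C(N)$. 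A natural warm-up and probable ingredient is to verify, by a finite computation uniform in $q$, that the reservoir already surjects onto the abelianization $\Gamma_1(N)^{\mathrm{ab}}$, so that only the commutator part remains --- there one can use that $\Gamma_1(N)$ is a free group.

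\textbf{The main obstacle} is to make this descent work uniformly in the residue class of $q$ modulo $N$, using only the \emph{single} modulus $q$. In the density-$1$ setting one may invoke Dirichlet's theorem to supply auxiliary primes in convenient progressions --- as in the proofs of Theorems~\ref{thm:finiteindex} and~\ref{thm:commutator} --- and simply discard the sparse set of $q$ for which the naive argument stalls, for instance $q\equiv-1\pmod*{N}$, or $q$ of small multiplicative order modulo $N$, or (for prime $q$) $q$ with $q-1$ having very few prime factors. To obtain \emph{every} large $q$ one must instead show that the constrained Euclidean algorithm --- using only the top rows $(q,b)$ with $\gcd(b,q)=1$ actually at one's disposal --- terminates for all $q\ge q_0(N)$; equivalently, one must rule out the existence of infinitely many $q$ for which $H_q\cap\Gamma_1(N)$ is a proper subgroup of $\Gamma_1(N)$. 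Proving termination in these residual cases is, I expect, the essential difficulty, and is presumably why the statement is offered only as a conjecture.
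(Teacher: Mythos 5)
You have not proved the statement, and neither does the paper: this is stated as an open conjecture, with only the density-one version (Theorem~\ref{thm:almostall}) and finite computational verifications (Theorem~\ref{thm:explicitq}) actually established. Your preliminaries are correct and coincide with the paper's own setup for the density-one result: the reduction to showing $H_q=\pi^{-1}(\langle q\rangle)$ is exactly the paper's observation that $H_q\supseteq\Gamma_1(N)$ if and only if $H_q=\Gamma_q$, and your argument that $T\in H_q$ (the same trick with left multiplication by $W$, which also preserves the upper-left entry, gives $W\in H_q$) matches the explicit matrices exhibited at the start of that proof. But everything after that is a plan rather than a proof, and you concede as much: the termination of your ``constrained Euclidean descent'' for \emph{every} sufficiently large $q$ is not an implementation detail, it is the entire content of the conjecture. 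So the gap is genuine and is the whole of the argument beyond the reservoir of elements of $H_q$.

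It is worth seeing concretely where the paper's method loses a density-zero set, since that is what any proof of the conjecture must overcome. The paper fixes generators of $\Gamma_q$, uses powers of $T$ and $W$ to normalize their upper-left entries to a single $q_1$ and their lower-left entries to distinct primes $NC_i$, and then needs a divisor $d_i$ of $q-q_1$ in each residue class $C_i\pmod*{Nq_1}$ --- condition \eqref{eq:divisors} --- which holds for almost all $q$ by Erd\H{o}s's theorem and for almost all primes by Lemma~\ref{lem:divisors}. This condition genuinely fails for infinitely many $q$ (e.g.\ whenever $q-q_1$ has too few prime factors to cover $(\Z/Nq_1\Z)^\times$ with its divisors), so the obstruction is not the group-theoretic descent but the arithmetic of $q-q_1$; one cannot fix it by choosing better words. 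Two smaller inaccuracies in your sketch: $\Gamma_1(N)$ is free only for $N\ge4$ (for $N\le3$ it has torsion), and the claim that a finite computation ``uniform in $q$'' handles the abelianization is not substantiated --- the image of your reservoir in $\Gamma_1(N)^{\mathrm{ab}}$ depends on $q$ through which top rows $(q,b)$ can be completed, which is again where the residue class of $q$ intervenes.
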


\begin{theorem}\label{thm:almostall}
$H_q\supseteq\Gamma_1(N)$ holds for almost all $q\in\N$ coprime to $N$
and for almost all primes $q\nmid N$, i.e.\
\begin{equation}\label{eq:almostall1}
\#\{q\in\N:(q,N)=1,\;H_q\supseteq\Gamma_1(N),\;q\le x\}
=\bigl(\tfrac{\varphi(N)}{N}+o(1)\bigr)x
\end{equation}
and
\begin{equation}\label{eq:almostall2}
\#\{q\text{ prime}:q\nmid N,\;H_q\supseteq\Gamma_1(N),\;q\le x\}
=(1+o(1))\pi(x)
\end{equation}
as $x\to\infty$.
\end{theorem}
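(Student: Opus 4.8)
The plan is to reduce the statement to a group-theoretic assertion that does not involve the size of $q$, prove that assertion for a density-$1$ set of $q$ by an explicit word construction, and then feed the exceptional set into a sieve.

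\emph{Step 1: reduction to $\Gamma(N)\subseteq H_q$.} First I would note that $T,W\in H_q$: for any $b$ with $(b,q)=1$, the matrices $\gamma_{q,b}T$ and $W^s\gamma_{q,b}$ again lie in $\Gamma_0(N)$ with upper-left entry $q$, hence are among the generators of $H_q$, so multiplying on the left by $\gamma_{q,b}^{-1}$ gives $T,W\in H_q$. Writing $\pi\colon\Gamma_0(N)\to(\Z/N)^\times$ for reduction of the upper-left entry, we have $\pi(H_q)=\langle\overline q\rangle$; moreover, choosing $b_0\equiv0\pmod*{N}$ with $(b_0,q)=1$ (e.g.\ $b_0=N$), the matrix $\gamma_{q,b_0}$ reduces mod $N$ to $\sm\overline q&0\\0&\overline q^{-1}\esm$, so together with $T$ this shows $H_q$ and $G_q:=\pi^{-1}(\langle\overline q\rangle)$ have the same image in $\SL_2(\Z/N)$. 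Since $\Gamma(N)=\ker(\Gamma_0(N)\to\SL_2(\Z/N))$ is contained in $G_q$, it follows that
$$
H_q\supseteq\Gamma_1(N)\ \Longleftrightarrow\ H_q=G_q\ \Longleftrightarrow\ \Gamma(N)\subseteq H_q .
$$
Thus the theorem is equivalent to: $\Gamma(N)\subseteq H_q$ for all $q$ coprime to $N$ (resp.\ all primes $q\nmid N$) outside a set of density $0$ — for then \eqref{eq:almostall1} follows because $\{q\le x:(q,N)=1\}$ has density $\varphi(N)/N$, and \eqref{eq:almostall2} because the primes $q\nmid N$ have density $1$ among all primes. (For $N\le2$ one has $\langle T,W\rangle=\Gamma_1(N)$ and the conclusion is immediate for every $q$; so assume $N\ge3$, in which case $\Gamma(N)$ is a free group.)

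\emph{Step 2: the group-theoretic core.} A direct computation shows that $\gamma_{q,a}\gamma_{q,b}^{-1}\in\Gamma(N)$ whenever $a\equiv b\pmod*{N}$ and $(a,q)=(b,q)=1$; and since $\Gamma(N)$ is normal in $\SL_2(\Z)$, all $H_q$-conjugates of such elements also lie in $\Gamma(N)\cap H_q$. Crucially, the lower rows of $\gamma_{q,a},\gamma_{q,b}$ may be chosen freely (they can be slid by integer multiples of $Nq$, and their entries modulo $q$ are exactly what the congruences force), so for large $q$ there is a rich supply of elements of $\Gamma(N)\cap H_q$ of this shape. Fixing a finite generating set $\sigma_1,\dots,\sigma_m$ of $\Gamma(N)$, the task becomes to express each $\sigma_j$ as a word in $T^{\pm1},W^{\pm1}$ and the $\gamma_{q,a}^{\pm1}$. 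I would do this by a Euclidean/continued-fraction reduction of $\sigma_j$: because $q$ is large, $\gcd(q,c)=1$ for the lower-left entry $Nc$ of every $\gamma_{q,a}$, so the available moves let one reduce $\sigma_j$ step by step to the identity, provided that at each stage a certain congruence modulo $q$ — with right-hand side a fixed nonzero quantity depending only on $\sigma_j$, $N$, and the earlier choices — is solvable with all entries remaining coprime to $q$.

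\emph{Step 3: the exceptional set and conclusion.} For a fixed $\sigma_j$, failure of the construction forces $q$ into a set cut out by divisibility/equidistribution constraints (e.g.\ $q$ dividing certain fixed nonzero integers, or the continued fractions arising in the reduction having atypically large partial quotients), and such a set has density $0$ both in $\N$ and among the primes — in particular it contains no full congruence class. Taking the union over $j=1,\dots,m$ keeps the exceptional set of density $0$, and Step 1 then turns this into \eqref{eq:almostall1} and \eqref{eq:almostall2}.

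\emph{Main obstacle.} The hard part will be Step 3 together with the uniformity in $q$ of Step 2: one must make the word construction succeed for all but a genuinely sparse set of $q$ and pin that set down precisely enough to bound its density, checking that no entire arithmetic progression of $q$ is excluded. That the set is sparse but presumably nonempty is exactly what distinguishes the present ``almost all'' statement from the companion conjecture that $H_q\supseteq\Gamma_1(N)$ for all sufficiently large $q$. By contrast, the reduction in Step 1 and the production of explicit elements of $\Gamma(N)\cap H_q$ in Step 2 are routine.
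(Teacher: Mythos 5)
Your Step 1 is essentially sound and parallels the paper's opening reduction (the paper phrases it as $H_q\supseteq\Gamma_1(N)\iff H_q=\Gamma_q$, where $\Gamma_q$ is the subgroup of $\Gamma_0(N)$ with upper-left entry a power of $q$ mod $N$; your version via $\Gamma(N)\subseteq H_q$ and $\Gamma_1(N)=\langle\Gamma(N),T\rangle$ is equivalent). But Steps 2 and 3, which are the entire content of the theorem, are placeholders rather than arguments. You never identify the congruence conditions that your ``Euclidean reduction'' requires, and you assert without justification that the set of $q$ violating them has density zero. That assertion is precisely the deep arithmetic input: in the paper, after normalizing a fixed generating set of $\Gamma_{q_0}$ so that every generator has the same upper-left entry $q_1$ and distinct prime lower-left entries $NC_i$, the condition that makes each generator land in $H_q$ is that the divisors of $q-q_1$ represent every invertible residue class modulo $Nq_1$ (one then moves the first column $\sm q_1\\NC_i\esm$ to $\sm q\\Nd_i\esm$ by explicit powers of $T$ and $W$). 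That this holds for almost all $q$ is a theorem of Erd\H{o}s, and the version for $q$ prime is a Hall-type theorem on the divisors of $p-p_0$ in residue classes, which the paper must prove separately (Lemma~\ref{lem:divisors}, using the normal order of $\omega(p-p_0)$, a Brun--Titchmarsh/sieve bound, and an $L$-function second-moment computation). None of this is recoverable from ``such a set has density $0$''; indeed the exceptional set is genuinely nonempty (the paper records cases where $H_q\subsetneq\Gamma_q$), so some nontrivial theorem is unavoidable.

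There is also a structural obstacle to your Step 2 as stated. Every generator of $H_q$ has entries of size at least $q$, while the target matrices $\sigma_j$ generating $\Gamma(N)$ have bounded entries; a continued-fraction reduction of $\sigma_j$ using such large matrices has no reason to terminate at the identity, and producing elements of $H_q$ with small entries is exactly the difficulty. The paper avoids this by arguing in the opposite direction: it fixes generators of $\Gamma_{q_0}$ independently of $q$ and shows each one individually lies in $H_q$, rather than decomposing them as words in the generators of $H_q$. As written, your proposal does not contain a proof.
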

\begin{proof}
For $q\in\N$ coprime to $N$, set
$$
\Gamma_q=
\left\{\begin{pmatrix}A&B\\C&D\end{pmatrix}\in\Gamma_0(N):
A\equiv q^n\pmod*{N}
\text{ for some }n\in\N\right\}.
$$
Then $\Gamma_q$ is a group satisfying
$\Gamma_1(N)\cup H_q\subseteq\Gamma_q\subseteq\Gamma_0(N)$,
and we have
$$
H_q\supseteq\Gamma_1(N)\Longleftrightarrow
H_q=\Gamma_q.
$$

Consider a fixed $q_0\in\N$ coprime to $N$, and let
$\bar{q}_0$ be a multiplicative inverse of $q_0\pmod*{N}$.
Then, for any $q\equiv q_0\pmod*{N}$,
$$
T=\begin{pmatrix}q&1\\q(N+\bar{q}_0)-1&\bar{q}_0+N\end{pmatrix}
\begin{pmatrix}q&1\\q\bar{q}_0-1&\bar{q}_0\end{pmatrix}^{-1},
$$
and
$$
W=\begin{pmatrix}q&1\\q\bar{q}_0-1&\bar{q}_0\end{pmatrix}^{-1}
\begin{pmatrix}q&q+1\\q\bar{q}_0-1&(q+1)\bar{q}_0-1\end{pmatrix}.
$$
so that $H_q$ and $\Gamma_q=\Gamma_{q_0}$ contain $\langle T,W\rangle$.

Let
$$
\{T,W\}\cup\left\{\gamma_i=
\begin{pmatrix}A_i&B_i\\NC_i&D_i\end{pmatrix}:1\le i\le h
\right\}
$$
be a fixed generating set for $\Gamma_{q_0}$, with
$\gamma_1=\sm q_0&1\\q_0\bar{q}_0-1&\bar{q}_0\esm$.
For $i\ge2$, replacing $\gamma_i$ by $\gamma_1^{n_i}\gamma_i$ for a
suitable $n_i$, we may assume that $A_i\equiv q_0\pmod*{N}$. Also,
we may assume that $A_i\ne0$, since otherwise $N=1$ and $\gamma_i$
is contained in $\langle T,W\rangle$.

Next, we modify $\gamma_1,\ldots,\gamma_h$ by multiplying by powers of
$T$ and $W$. First, multiplying by $W^{m_i}$ on the left leaves $A_i$
unchanged and replaces $C_i$ by $C_i+m_iA_i$. Hence, by Dirichlet's
theorem, we may take $C_1,\ldots,C_h$ to be distinct primes not dividing
$N$.  Second, by the Chinese remainder theorem, we can choose $q_1\in\N$
satisfying $q_1\equiv q_0\pmod*{N}$ and $q_1\equiv A_i\pmod*{C_i}$ for
every $i$. Multiplying on the left by $T^{(q_1-A_i)/(NC_i)}$ replaces
each $A_i$ by $q_1$.

Now, let $q\in\N$ with $q\equiv q_0\pmod*{N}$.
Suppose that the divisors of $q-q_1$ represent all invertible
residue classes modulo $Nq_1$, i.e.\
\begin{equation}\label{eq:divisors}
\{d+Nq_1\Z:d\in\N,\;d\mid(q-q_1)\}\supseteq(\Z/Nq_1\Z)^\times.
\end{equation}
For $i=1,\ldots,h$, let $d_i$ be a divisor of $q-q_1$ satisfying
$d_i\equiv C_i\pmod*{Nq_1}$. Then $(d_i,N)=1$, so $Nd_i\mid(q-q_1)$.
Hence,
$$
T^{\frac{q-q_1}{Nd_i}}W^{\frac{d_i-C_i}{q_1}}
\begin{pmatrix}q_1\\NC_i\end{pmatrix}
=\begin{pmatrix}q\\Nd_i\end{pmatrix},
$$
so that $\gamma_i$ is contained in $H_q$. Therefore
$H_q=\Gamma_{q_0}=\Gamma_q$.

Erd\H{o}s \cite{erdos} showed that almost all $q\in\N$ satisfy
\eqref{eq:divisors}. Therefore, the set of $q\in\N$
such that $q\equiv q_0\pmod*{N}$ and $H_q=\Gamma_q$ has density
$1/N$. Letting $q_0$ run through a set of representatives
for the invertible residue classes mod $N$ yields \eqref{eq:almostall1}.
For the prime case, we similarly apply Lemma~\ref{lem:divisors} with
$(p_0,q)=(q_1,Nq_1)$ to see that almost all $q\nmid N$ satisfy
\eqref{eq:divisors}, and this leads to \eqref{eq:almostall2}.
\end{proof}

\begin{theorem}\label{thm:explicitq}
For each $N$ in the following table,
$H_q\supseteq\Gamma_1(N)$ holds for $q\in\N$ with $(q,N)=1$ and for 
primes $q\nmid N$ in the indicated intervals.
\begin{center}
\begin{tabular}{rrr|rrr}
$N$  & $ (q,N)=1$       & prime $q\nmid N$  &  $N$  & $(q,N) = 1$       & prime $q \nmid N$ \\ \hline
$5$  & $ (44,10^9)$     & $(0,10^9)$        &  $14$ & $ (55, 10^9)$     & $(43, 10^9)$      \\
$6$  & $ (1,10^9)$      & $(0,10^9)$        &  $15$ & $ (91, 10^9)$     & $(31, 10^9)$      \\
$7$  & $ (20,10^9)$     & $(0,10^9)$        &  $16$ & $ (63, 10^9)$     & $(47, 10^9)$      \\
$8$  & $ (15,10^9)$     & $(7,10^9)$        &  $17$ & $ (390, 10^5)$    & $(101,10^9)$     \\
$9$  & $ (136,10^9)$    & $(2,10^9)$        &  $18$ & $ (55, 10^9)$     & $(53, 10^9)$      \\
$10$ & $ (39,10^9)$     & $(11,10^9)$       &  $19$ & $ (360, 10^5)$    & $(37, 10^9)$      \\
$11$ & $ (84, 10^9)$    & $(2, 10^9)$       &  $20$ & $ (119, 10^5)$    & $(79, 10^9)$      \\
$12$ & $ (35, 10^9)$    & $(23, 10^9)$      &  $21$ & $ (230, 10^5)$    & $(83, 10^9)$      \\
$13$ & $ (168, 10^9)$   & $(5, 10^9)$       &  $22$ & $ (175, 10^5)$    & $(43, 10^9)$      \\
\end{tabular}
\end{center}
\end{theorem}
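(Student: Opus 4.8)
The plan is to make the argument of Theorem~\ref{thm:almostall} effective for the small levels in the table. Recall from that proof that the obstruction to $H_q=\Gamma_q$ (equivalently, to $H_q\supseteq\Gamma_1(N)$) is a condition of Erd\H{o}s type: once one fixes a generating set $\{T,W,\gamma_1,\ldots,\gamma_h\}$ for $\Gamma_{q_0}$ with suitable normalizations, it suffices that the divisors of $q-q_1$ hit every invertible residue class mod $Nq_1$, where $q_1$ is an explicit integer depending only on the chosen generating set and the residue $q_0\pmod*{N}$. So the scheme is: first, for each $N$ and each invertible class $q_0\pmod*{N}$, compute (in Sage) an explicit generating set for $\Gamma_{q_0}$, carry out the reductions in the proof of Theorem~\ref{thm:almostall} to pin down an explicit modulus $Nq_1$; second, verify computationally that every $q$ coprime to $N$ in the stated interval satisfies \eqref{eq:divisors} for the relevant $q_1$. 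For $q$ in a bounded range this is a finite check: factor $q-q_1$ and confirm its divisors cover $(\Z/Nq_1\Z)^\times$. The prime-case column uses the variant with $(p_0,q)=(q_1,Nq_1)$ and Lemma~\ref{lem:divisors} in place of Erd\H{o}s's theorem, again a finite verification over primes $q\le 10^9$.

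In more detail, the steps are as follows. First I would, for a fixed $N$, enumerate representatives $q_0$ of $(\Z/N\Z)^\times$; for each, compute a presentation of $\Gamma_{q_0}$ and extract a minimal generating set containing $T,W$. Second, I would perform the three explicit modifications from the proof of Theorem~\ref{thm:almostall}: multiply $\gamma_i$ on the left by a power of $\gamma_1$ to force $A_i\equiv q_0\pmod*{N}$; multiply by powers of $W$ to replace the lower-left entries $C_i$ by distinct primes $\ell_i\nmid N$; and apply CRT to choose $q_1\equiv q_0\pmod*{N}$ with $q_1\equiv A_i\pmod*{\ell_i}$ for all $i$, then multiply by powers of $T$ to make all $A_i$ equal to $q_1$. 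This yields a concrete $q_1$ (hence a concrete modulus $m:=Nq_1$) for which the conclusion ``\eqref{eq:divisors} holds $\Rightarrow H_q=\Gamma_q$'' is valid verbatim. Third, I would run, for every $q$ coprime to $N$ with $q$ in the indicated interval, the divisor-covering test modulo $m$; the reported left endpoint of each interval is simply the largest $q$ (coprime to $N$) that fails the test, and the right endpoint $10^9$ (or $10^5$) is the limit of the computation. The prime column is handled the same way with the Lemma~\ref{lem:divisors} variant.

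The main obstacle is not conceptual but computational and bookkeeping: one must actually produce the generators of $\Gamma_{q_0}$ and push them through the normalization steps for every residue class $q_0$ and every $N$ in the table, keeping $q_1$ as small as possible so that the divisor-covering condition becomes true early (otherwise the left endpoint of the interval balloons). For the larger levels $N\in\{17,\ldots,22\}$, where only ranges up to $10^5$ are claimed in the ``$(q,N)=1$'' column, the cost of factoring $q-q_1$ and checking divisor coverage for a potentially large modulus $m$ is the limiting factor, which is why the ranges there are shorter. A secondary subtlety is ensuring the $\ell_i$ chosen in step two are genuinely distinct primes not dividing $N$ and compatible with the CRT conditions; this is routine but must be done carefully for each case. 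Since every assertion reduces to a finite, mechanically checkable computation, once the explicit $q_1$'s are recorded the verification — carried out in Sage \cite{sage} — completes the proof.
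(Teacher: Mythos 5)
There is a genuine gap: the effectivization you propose cannot certify \emph{every} $q$ in an interval of length $10^9$, because the sufficient condition \eqref{eq:divisors} (or its weaker form, that the divisors of $q-q_1$ hit the finitely many classes $C_i\pmod*{Nq_1}$) fails for a very large, though density-zero, set of $q$. For instance, whenever $q-q_1$ is prime its only divisors are $1$ and $q-q_1$, so it can represent at most two residue classes modulo $Nq_1$; there are on the order of $10^9/\log 10^9\approx 5\times 10^7$ such $q$ up to $10^9$, and more generally any $q$ for which $\omega(q-q_1)$ is small fails the test. Since \eqref{eq:divisors} is only sufficient, these failures do not disprove $H_q\supseteq\Gamma_1(N)$, but they leave tens of millions of unresolved values of $q$ in each row of the table, so your ``left endpoint = largest failing $q$'' bookkeeping cannot produce intervals like $(44,10^9)$. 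The same objection applies to the prime column: Lemma~\ref{lem:divisors} is again an almost-all statement and does not verify individual primes.

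The paper's actual proof avoids this rigidity. It uses Lemma~\ref{lem:Gamma1gen}, which for each generator $g_i$ of $\Gamma_1(N)$ and each word $\gamma_i=w_1g_i^{\pm1}w_2$ with $w_1,w_2\in\langle T,W\rangle$ (enumerated via the height function of Lemma~\ref{lem:height} and Corollary~\ref{cor:TWfree}) produces a \emph{congruence} condition $q\equiv Nm_ib_i\pmod*{r_i}$, $(q,Nm_i)=1$, guaranteeing $g_i\in H_q$. Each such condition captures a positive density of $q$, and accumulating many words per generator yields a sieve covering the vast majority of $q$ in the range; the finitely many leftovers are then settled individually by Todd--Coxeter coset enumeration of $[\SL_2(\Z):H_q]$ in GAP. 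If you want to salvage your approach, you would need to replace the single divisor-covering test by a union of many such congruence certificates together with a direct group-theoretic check for the residual cases -- which is essentially the paper's method.
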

\begin{proof}
We applied two strategies to verify the statement computationally.
First, we used Lemma~\ref{lem:height} and Corollary~\ref{cor:TWfree}
to compute a list $L$ of all elements of $\langle T,W\rangle$ of height
up to some bound chosen by trial and error (e.g.\ for $N=13$ we chose
the bound $5500$, which yielded $290841$ words in $T,W$). We then
used Sage \cite{sage} to compute a generating set $\{g_1,\ldots,g_h\}$
for $\Gamma_1(N)$, and for each generator we computed every word of
the form $w_1g_i^{\pm1}w_2$, for $w_1,w_2\in L$. Combining this with
Lemma~\ref{lem:Gamma1gen} and a simple sieve, we obtained sufficient
conditions to establish the claim for the vast majority of $q$.

For the relatively small number of values of $q$ remaining, we computed the
expansions of every
element $\gamma_{q,a}$ for $1\le a\le q$ in terms of the generators
$S=\sm&-1\\1&\esm$ and $T=\sm1&1\\&1\esm$ of $\SL_2(\Z)$, and presented
$\SL_2(\Z)\cong\langle S,T:S^4=S^2(ST)^3=1\rangle$ as an abstract group to
GAP \cite{gap}. We then used GAP's implementation of the Todd--Coxeter
algorithm \cite{todd-cox} to attempt to compute the index $[\SL_2(\Z):H_q]$.
When this terminated with a number equal to the expected index
$[\SL_2(\Z):\Gamma_q]$, we obtained the claim for $q$.

The first strategy tends to work better at finding prime values of $q$,
which explains the discrepancy in the sizes of the intervals for larger
values of $N$, where there are eventually too many exceptions to test by
the second method in a reasonable amount of time.

For some $q$ (those for which the Todd--Coxeter algorithm appeared not
to terminate), our results were inconclusive, though we expect that
$H_q\not\supseteq\Gamma_1(N)$ in those cases. In a very small number of
cases, $H_q$ has finite index in $\SL_2(\Z)$ but is not
the full group $\Gamma_q$.
\end{proof}

\section{Lemmas}
\begin{lemma}\label{lem:cq}
Let $q\in\N$ with $(q,N)=1$.
The assumptions of Conjecture~\ref{main} imply the relation
\begin{equation}\label{e:gz_cqgen}
	\sum_{\substack{a\pmod*{q}\\(a,q)=1}} f\bigg|
	\Bigl[\gamma_{q,a} - \overline{\xi(q)}\Bigr] \bpm 1 & \frac{a}{q} \\ 0 & 1\ebpm
	=0, 
\end{equation}
where $\gamma_{q,a}$ is any element of $\Gamma_0(N)$ with top row $\sm q&-a\esm$.
\end{lemma}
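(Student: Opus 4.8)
The plan is to apply Hecke's argument --- exactly as in the $q=1$ case recorded in Section~\ref{sec:results} --- to the auxiliary $1$-periodic function obtained by averaging $f$ over additive shifts by $a/q$, and then to rewrite the resulting modular relation in the shape \eqref{e:gz_cqgen}.

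First I would note that $\Lambda_f(s,c_q)$ is the Hecke-completed $L$-function of
$$
F_q:=\sum_{\substack{a\pmod*{q}\\(a,q)=1}}f\bigg|\bpm 1&\tfrac aq\\0&1\ebpm=\sum_{n=1}^\infty f_nc_q(n)e(nz),
$$
and likewise $\Lambda_g(s,c_q)$ is that of $G_q:=\sum_{\substack{a\pmod*{q}\\(a,q)=1}}g\big|\bpm1&a/q\\0&1\ebpm=\sum_{n\ge1}g_nc_q(n)e(nz)$; here the factor $\Gamma_\C(s+\tfrac{k-1}2)$ is exactly the completion appropriate to weight $k$, and both $F_q,G_q$ are holomorphic and $1$-periodic on $\HH$. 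The hypotheses of Conjecture~\ref{main} say precisely that $\Lambda_f(s,c_q)$ and $\Lambda_g(s,c_q)$ are entire of finite order and satisfy \eqref{e:fe_cq}. Feeding this into \cite[Theorem~4.3.5]{miyake}, just as in Section~\ref{sec:results} but now at level $Nq^2$ and with functional-equation constant $i^k\xi(q)$ in place of $i^k$ (Hecke's correspondence applies equally to any level and any such constant), one obtains that the hypotheses are equivalent to
$$
F_q\bigg|\bpm 0&-1\\Nq^2&0\ebpm=\xi(q)\,G_q.
$$
I would also keep at hand the $q=1$ instance established in Section~\ref{sec:results}, namely $g=f\big|\bpm0&-1\\N&0\ebpm$.

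The core is then a short matrix computation. Fix, for each $a$ coprime to $q$, a matrix $\gamma_{q,a}=\bpm q&-a\\Nc_a&d_a\ebpm\in\Gamma_0(N)$; then $\det\gamma_{q,a}=1$ gives $qd_a+Nac_a=1$, hence $Nac_a\equiv1\pmod*{q}$, i.e.\ $c_a\equiv\overline{Na}\pmod*{q}$. A direct check gives the two identities
$$
\gamma_{q,a}\bpm 1&\tfrac aq\\0&1\ebpm=\bpm q&0\\Nc_a&\tfrac1q\ebpm
\qquad\text{and}\qquad
\bpm q&0\\Nc_a&\tfrac1q\ebpm\bpm 0&-1\\Nq^2&0\ebpm=q\bpm 0&-1\\N&0\ebpm\bpm 1&-\tfrac{c_a}q\\0&1\ebpm .
$$
Slashing $\sum_{(a,q)=1}f\big|\gamma_{q,a}\bpm1&a/q\\0&1\ebpm$ on the right by $\bpm0&-1\\Nq^2&0\ebpm$, and using these identities together with $f\big|(qI)=f$ and $f\big|\bpm0&-1\\N&0\ebpm=g$, collapses the whole expression to $\sum_{(a,q)=1}g\big|\bpm1&-c_a/q\\0&1\ebpm$. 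Since $g\big|\bpm1&t/q\\0&1\ebpm$ depends only on $t\bmod q$ and $a\mapsto-c_a\bmod q$ permutes $(\Z/q\Z)^\times$, this equals $\sum_{(b,q)=1}g\big|\bpm1&b/q\\0&1\ebpm=G_q$. As $(q,N)=1$ forces $\xi(q)\overline{\xi(q)}=1$, we may write $G_q=\overline{\xi(q)}\bigl(F_q\big|\bpm0&-1\\Nq^2&0\ebpm\bigr)=\bigl(\overline{\xi(q)}F_q\bigr)\big|\bpm0&-1\\Nq^2&0\ebpm$; cancelling the invertible operator $\cdot\,\big|\bpm0&-1\\Nq^2&0\ebpm$ gives
$$
\sum_{\substack{a\pmod*{q}\\(a,q)=1}}f\bigg|\gamma_{q,a}\bpm1&\tfrac aq\\0&1\ebpm=\overline{\xi(q)}\sum_{\substack{a\pmod*{q}\\(a,q)=1}}f\bigg|\bpm1&\tfrac aq\\0&1\ebpm ,
$$
which, rearranged, is \eqref{e:gz_cqgen}. (The left-hand side is independent of the choice of $\gamma_{q,a}$, since two choices differ by a left factor of $W=\bpm1&0\\N&1\ebpm$ and $f\big|W=f$.)

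I expect the only genuine subtlety to be finding the right grouping of matrices. The naive product $\bpm1&a/q\\0&1\ebpm\bpm0&-1\\Nq^2&0\ebpm$ does not factor through $\Gamma_0(N)$, so one must instead bring in $\gamma_{q,a}$ and slash by the whole word $\gamma_{q,a}\bpm1&a/q\\0&1\ebpm$ before applying the Fricke-type involution $\bpm0&-1\\Nq^2&0\ebpm$, at which point the product telescopes to $q\bpm0&-1\\N&0\ebpm\bpm1&-c_a/q\\0&1\ebpm$ and the already-known $q=1$ relation $f|\bpm0&-1\\N&0\ebpm=g$ becomes usable. Everything else --- that \cite[Theorem~4.3.5]{miyake} transfers verbatim to level $Nq^2$, that the reindexing $a\mapsto-c_a$ reproduces $G_q$, and that the scalar $q$ and the unit $\xi(q)$ drop out --- is routine.
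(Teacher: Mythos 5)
Your proposal is correct and follows essentially the same route as the paper: both convert the hypotheses via Hecke's argument \cite[Theorem~4.3.5]{miyake} into the modular relation $F_q|\sm 0&-1\\Nq^2&0\esm=\xi(q)G_q$, then use the same matrix identity $\gamma_{q,a}\sm 1&a/q\\0&1\esm\sm 0&-1\\Nq^2&0\esm=q\sm 0&-1\\N&0\esm\sm 1&-c_a/q\\0&1\esm$ together with the $q=1$ relation $f|\sm 0&-1\\N&0\esm=g$ and the reindexing $a\mapsto -c_a$ on $(\Z/q\Z)^\times$. The only difference is cosmetic (you apply the Fricke matrix to the $\gamma_{q,a}$-sum and cancel it at the end, while the paper unwinds $g$ back into $f$ directly), so no further comment is needed.
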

\begin{proof}
From Hecke \cite[Theorem~4.3.5]{miyake}
we know that the functional equation in Conjecture~\ref{main}
is equivalent to the equation
\begin{equation}
\sum_{n=1}^{\infty}f_n c_q(n) e^{2\pi i nz} = (-1)^k \xi(q) (Nq^2)^{-\frac{k}{2}} z^{-k} \sum_{n=1}^{\infty} g_n c_q(n) e^{2 \pi i \frac{-n}{Nq^2z}}.
\label{eq:modularrelation}
\end{equation}
In particular we find for $q=1$, that $f|\sm 0 & -N^{-\frac{1}{2}} \\ N^{\frac{1}{2}} & 0 \esm=g$, where $g(z)=\sum_{n=1}^{\infty} g_n e^{2 \pi i n z}$. Now we shall note that \eqref{eq:modularrelation} may be rewritten as
\begin{equation*}
\sum_{\substack{a\pmod*{q}\\(a,q)=1}} f\bigg| \bpm 1 & \frac{a}{q} \\ 0 & 1\ebpm
	=\xi(q)\sum_{\substack{c\pmod*{q}\\(c,q)=1}}
	g\bigg| \bpm -N^{\frac{1}{2}}c & N^{-\frac{1}{2}}q^{-1} \\ -N^{\frac{1}{2}}q & 0\ebpm.
\end{equation*}
Combining this with the matrix identity
\begin{equation*}
\bpm 0 & -N^{-\frac{1}{2}} \\ N^{\frac{1}{2}} & 0 \ebpm \bpm
-N^{\frac{1}{2}}c & N^{-\frac{1}{2}}q^{-1} \\ -N^{\frac{1}{2}}q & 0\ebpm
= \bpm q & 0 \\ -Nc & q^{-1}\ebpm = \bpm q & -a \\ -Nc & s\ebpm \bpm 1 &
\frac{a}{q} \\ 0 & 1\ebpm,
\end{equation*}
where $a=a(c)$ is chosen so that $Nca\equiv-1\pmod*{q}$ and $s=(Nac+1)/q$,
we derive
\begin{equation*}
\sum_{\substack{a\pmod*{q}\\(a,q)=1}}
f\bigg| \bpm 1 & \frac{a}{q} \\ 0 & 1\ebpm
	=\xi (q) \sum_{\substack{c\pmod*{q}\\(c,q)=1}} f\bigg|
	\bpm q & -a \\ -Nc & s\ebpm  \bpm 1 & \frac{a}{q} \\ 0 & 1\ebpm.
\end{equation*}
Here the summation over $c$ may be replaced by the summation over
$a\pmod*{q}, (a,q)=1$, by choosing appropriate representatives, thereby proving the lemma.
\end{proof}

\begin{lemma}\label{lem:weil}
Suppose that $h:\HH\to\C$ is a holomorphic function,
$M\in\SL_2(\R)$ is elliptic of infinite order,
and $\zeta\in\C^\times$ is a root of unity such that
$h|M=\zeta h$. Then $h=0$.
\end{lemma}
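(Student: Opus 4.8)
The plan is to reduce $M$ to a standard rotation, transfer the problem to the unit disk (where the rotation becomes an honest rotation about the origin), and then extract a vanishing statement for the Taylor coefficients of $h$ by comparing coefficients on the two sides of the transformation law.

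First I would use that an elliptic $M\in\SL_2(\R)$ has a unique fixed point $z_0\in\HH$. Choosing $\sigma\in\SL_2(\R)$ with $\sigma(i)=z_0$, the element $R:=\sigma^{-1}M\sigma$ fixes $i$, so $R=\sm\cos\theta&-\sin\theta\\\sin\theta&\cos\theta\esm$ for some $\theta\in\R$, and $M$ has infinite order precisely when $\theta/\pi\notin\Q$. Replacing $h$ by $h|\sigma$ — again holomorphic on $\HH$, since $\sigma$ maps $\HH$ to $\HH$ and the automorphy factor of $\sigma$ is nonvanishing there — and using the right-action identities, one computes $(h|\sigma)|R=h|(\sigma R)=h|(M\sigma)=(\zeta h)|\sigma=\zeta(h|\sigma)$; since $h|\sigma=0$ forces $h=0$, I may assume from the start that $M=R$, with $\theta/\pi\notin\Q$. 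The only fact about $\theta$ I will need is that $e^{im\theta}$ is not a root of unity for any nonzero integer $m$: indeed $e^{im\theta}$ is a root of unity iff $m\theta\in2\pi\Q$, i.e.\ iff $\theta/\pi\in\Q$.

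Next I would pass to the unit disk $\mathbb D$ via the Cayley transform $w=\frac{z-i}{z+i}$, regarded as an element $C\in\SL_2(\C)$. A short computation shows $C\,R\,C^{-1}=\pm\sm e^{-i\theta}&0\\0&e^{i\theta}\esm$, which acts on $\mathbb D$ as the rotation $w\mapsto e^{-2i\theta}w$. Since $C^{-1}$ maps $\mathbb D$ biholomorphically onto $\HH$ and its automorphy factor is nonvanishing on $\mathbb D$, the function $\phi:=h|C^{-1}$ is holomorphic on $\mathbb D$, and the transformation law for $h$ turns into $\phi(e^{-2i\theta}w)=\lambda\,\phi(w)$ on $\mathbb D$, where $\lambda=\pm e^{ik\theta}\zeta$ (the sign being immaterial) is the constant obtained by combining $\zeta$ with the constant automorphy factor of $C\,R\,C^{-1}$.

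Finally I would write $\phi(w)=\sum_{n\ge0}b_nw^n$ and compare coefficients of $w^n$ in $\phi(e^{-2i\theta}w)=\lambda\phi(w)$, obtaining $b_n\bigl(e^{-2in\theta}-\lambda\bigr)=0$ for every $n\ge0$; equivalently, $b_n=0$ unless $e^{-i(2n+k)\theta}=\pm\zeta$. Since $k\ge1$ we have $2n+k\ge1$, so $e^{-i(2n+k)\theta}$ is not a root of unity and hence cannot equal $\pm\zeta$. Therefore $b_n=0$ for every $n$, so $\phi\equiv0$ and $h\equiv0$. The argument is routine once the normalizations are fixed; the one step that genuinely uses the hypotheses is the last one, where the clash between ``$e^{-i(2n+k)\theta}$ is not a root of unity'' (forced by $M$ having infinite order together with $k>0$) and ``$\zeta$ is a root of unity'' kills every Taylor coefficient. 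If $M$ had finite order, or if $\zeta$ were merely assumed to lie on the unit circle, there would be nonzero solutions, e.g.\ monomials $\phi(w)=w^n$ on the disk side.
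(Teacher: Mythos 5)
Your proof is correct, but it takes a genuinely different route from the paper's. The paper derives the statement as a corollary of the classical Weil lemma (\cite[Lemma~1.5.1]{bump}, which is the case $\zeta=1$): if $\zeta$ has order $n$, then $h^n$ is invariant under $M$ for the weight-$kn$ slash action, so the cited lemma gives $h^n=0$ and hence $h=0$. You instead reprove the lemma from scratch by the standard argument — conjugate $M$ to a rotation fixing $i$, pass to the unit disk, and compare Taylor coefficients — which is exactly the ``same method'' the paper alludes to before opting for the shorter power trick. Your version is self-contained and in fact slightly stronger: your coefficient relation shows $b_n=0$ unless $e^{-i(2n+k)\theta}=\pm\zeta$, so the conclusion $h=0$ holds for every $\zeta\in\C^\times$ outside the countable set $\{\pm e^{-i(2n+k)\theta}:n\ge0\}$, whereas the paper's reduction genuinely needs $\zeta$ to have finite order. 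The costs are the bookkeeping you rightly flag — extending the weight-$k$ action to the complex matrices $C$, $C^{-1}$ and $CRC^{-1}$, with the sign ambiguity in $CRC^{-1}=\pm\sm e^{-i\theta}&0\\0&e^{i\theta}\esm$ harmlessly absorbed since $-\zeta$ is still a root of unity — and the explicit use of $k>0$, which is indeed available since $k$ is the positive integer fixed in Conjecture~\ref{main} and is what kills the surviving constant term when $\zeta=\pm1$. Both points check out.
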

\begin{proof}
This is an extension of Weil's Lemma \cite[Lemma~1.5.1]{bump}, which
is the special case $\zeta=1$. It can be proven by the same method or,
alternatively, derived as a consequence, as follows.
Suppose that $\zeta$ has order $n$, and let $M=\sm a&b\\c&d\esm$.
Then we have
$$
(cz+d)^{-kn}h\!\left(\frac{az+b}{cz+d}\right)^n
=\bigl((h|M)(z)\bigr)^n=h(z)^n.
$$
Applying Weil's Lemma to $h^n$ (and the weight-$kn$ slash operator),
we conclude that $h^n=0$, whence $h=0$.
\end{proof}

\begin{lemma}\label{lem:conrey-farmer}
Assume the hypotheses of Conjecture~\ref{main}, and
suppose that $N=qs-1$, where $q,s\in\{3,4,6\}$. Then
$f|\gamma_{q,1}=\overline{\xi(q)}f$.
\end{lemma}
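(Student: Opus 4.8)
The plan is to apply Lemma~\ref{lem:cq} twice --- once with modulus $q$ and once with modulus $s$ --- and to splice the two resulting relations together into a single functional equation of the type handled by Lemma~\ref{lem:weil}. Here $\gcd(q,N)=\gcd(s,N)=1$ since $N=qs-1$, and $\varphi(q)=\varphi(s)=2$, so each application of Lemma~\ref{lem:cq} produces a relation with exactly two terms. I would fix the representatives
$$
\gamma_{q,1}=\bpm q&-1\\-N&s\ebpm,\quad
\gamma_{q,-1}=\bpm q&1\\N&s\ebpm,\quad
\gamma_{s,1}=\gamma_{q,-1}^{-1}=\bpm s&-1\\-N&q\ebpm,\quad
\gamma_{s,-1}=\gamma_{q,1}^{-1}=\bpm s&1\\N&q\ebpm,
$$
all lying in $\Gamma_0(N)$ with determinant $qs-N=1$. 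After rewriting the $a=q-1$ term of Lemma~\ref{lem:cq} (multiply on the right by $T^{-1}$ and use $f|T=f$, together with the fact that $f|\gamma$ depends only on the top row of $\gamma$), the modulus-$q$ relation becomes
$$
f|[\gamma_{q,1}-\chi]\bpm1&1/q\\0&1\ebpm+f|[\gamma_{q,-1}-\chi]\bpm1&-1/q\\0&1\ebpm=0,
\qquad\chi:=\overline{\xi(q)},
$$
and the modulus-$s$ relation is the analogous identity with $\gamma_{q,1}^{-1}$, $\gamma_{q,-1}^{-1}$, $1/s$ and $\overline{\xi(s)}$ in place of $\gamma_{q,1}$, $\gamma_{q,-1}$, $1/q$ and $\chi$. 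Since $qs=N+1\equiv1\pmod N$ we have $\xi(q)\xi(s)=1$, hence $\overline{\xi(s)}=\xi(q)=\overline\chi$.

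Next I would distil these into a self-transformation of a single holomorphic function. Set
$$
F=f|[\gamma_{q,1}-\chi],\quad G=f|[\gamma_{q,-1}-\chi],\quad
F_*=f|[\gamma_{q,1}^{-1}-\overline\chi],\quad G_*=f|[\gamma_{q,-1}^{-1}-\overline\chi],
$$
all holomorphic on $\HH$. The two two-term relations give $G=-F|\sm1&2/q\\0&1\esm$ and $F_*=-G_*|\sm1&2/s\\0&1\esm$. Separately, slashing $F$ by $\gamma_{q,1}^{-1}$ and substituting $f|\gamma_{q,1}^{-1}=F_*+\overline\chi f$, one obtains, using $\chi\overline\chi=1$, that $F|\gamma_{q,1}^{-1}=-\chi F_*$; the analogous computation gives $G|\gamma_{q,-1}^{-1}=-\chi G_*$. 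Chaining these four identities around a loop, the powers of $\chi$ cancel and one is left with
$$
F=F|M,\qquad M:=\gamma_{q,1}^{-1}\bpm1&-2/s\\0&1\ebpm\gamma_{q,-1}\bpm1&-2/q\\0&1\ebpm.
$$

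Finally I would compute $M$ and appeal to Lemma~\ref{lem:weil}. Using $qs=N+1$ repeatedly, the inner product $\gamma_{q,1}^{-1}\sm1&-2/s\\0&1\esm\gamma_{q,-1}$ collapses to $\sm1&0\\2N/s&1\esm$, whence
$$
M=\bpm1&-2/q\\2N/s&(1-3N)/(N+1)\ebpm,\qquad
\det M=1,\qquad \tr M=\frac{2(1-N)}{N+1}.
$$
Since $0<2(N-1)/(N+1)<2$, the matrix $M$ is elliptic; and since $\tr M$ is rational and, because $N=qs-1\ge8$, is not equal to $0$ or $\pm1$, Niven's theorem forces $M$ to have infinite order. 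As $F$ is holomorphic on $\HH$ and $F|M=F$, Lemma~\ref{lem:weil} (with $\zeta=1$) gives $F=0$, that is, $f|\gamma_{q,1}=\overline{\xi(q)}f$, as desired.

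The main obstacle is conceptual rather than computational: one has to recognise that a single modulus does not suffice --- the inverses $\gamma_{q,1}^{-1},\gamma_{q,-1}^{-1}$ have upper-left entry $s\ne q$ and so are only reached via the modulus-$s$ relation --- and that the representatives must be chosen as inverse pairs so that the four derived identities close into a loop. The normalisation of the $a=q-1$ term is a small but essential technicality: without it the matrices coming from the two relations do not line up as inverses of one another, and the loop fails to close. Everything downstream of the loop is mechanical --- a direct verification of matrix identities, plus the elementary fact that a nontrivial element of $\SL_2(\R)$ with rational trace of absolute value $<2$ that is not $0$ or $\pm1$ must have infinite order.
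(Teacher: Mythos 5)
Your proof is correct and follows essentially the same route as the paper: apply Lemma~\ref{lem:cq} for both moduli $q$ and $s$, exploit the coincidence $\gamma_{q,\pm1}=\gamma_{s,\mp1}^{-1}$ to close the relations into $F|[I-M]=0$ with exactly the matrix $M=\gamma_{q,1}^{-1}\bigl(\begin{smallmatrix}1&-2/s\\&1\end{smallmatrix}\bigr)\gamma_{s,1}^{-1}\bigl(\begin{smallmatrix}1&-2/q\\&1\end{smallmatrix}\bigr)$ that appears in the paper (your $2N/s=2q-2/s$ and $(1-3N)/(N+1)=-3+4/(qs)$), and finish with Lemma~\ref{lem:weil} via the non-integral rational trace. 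The only differences are presentational: you unpack the paper's single chain of equalities into four named identities and make the $\zeta=1$ invocation of Lemma~\ref{lem:weil} explicit.
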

\begin{proof}
Note that $\varphi(q)=\varphi(s)=2$, and
we have $\gamma_{q,\pm1}=\gamma_{s,\mp1}^{-1}=\sm q&\mp1\\\mp N&s\esm$.
Hence, applying Lemma~\ref{lem:cq} to both $q$ and $s$, we obtain
\begin{align*}
f\bigl|\bigl[\gamma_{q,1}-\overline{\xi(q)}\bigr]
&=-f\biggl|\bigl[\gamma_{q,-1}-\overline{\xi(q)}\bigr]\bpm 1&-2/q\\&1\ebpm
=\xi(s)f\biggl|\bigl[\gamma_{s,1}-\overline{\xi(s)}\bigr]
\gamma_{s,1}^{-1}\bpm 1&-2/q\\&1\ebpm\\
&=-\xi(s)f\biggl|\bigl[\gamma_{s,-1}-\overline{\xi(s)}\bigr]
\bpm 1&-2/s\\&1\ebpm\gamma_{s,1}^{-1}\bpm 1&-2/q\\&1\ebpm\\
&=f\biggl|\bigl[\gamma_{q,1}-\overline{\xi(q)}\bigr]
\gamma_{q,1}^{-1}\bpm 1&-2/s\\&1\ebpm\gamma_{s,1}^{-1}\bpm 1&-2/q\\&1\ebpm.
\end{align*}
Writing $M=\gamma_{q,1}^{-1}\sm 1&-2/s\\&1\esm
\gamma_{s,1}^{-1}\sm 1&-2/q\\&1\esm
=\sm1&-2/q\\2q-2/s&-3+4/(qs)\esm$,
we thus have
$$
f\bigl|\bigl[\gamma_{q,1}-\overline{\xi(q)}\bigr][I-M]=0.
$$
Note that $|\tr{M}|<2$ and $\tr{M}\notin\Z$, so $M$ is elliptic of
infinite order. Applying Lemma~\ref{lem:weil} to
$h=f|[\gamma_{q,1}-\overline{\xi(q)}]$, we obtain
$f|\gamma_{q,1}=\overline{\xi(q)}f$.
\end{proof}

\begin{lemma}\label{lem:1qinv}
Assume the hypotheses of Conjecture~\ref{main}, and suppose there exist
$\gamma=\sm A&B\\C&D\esm\in\Gamma_0(N)$, $\alpha\in\Q$ and a root of unity
$\zeta\in\C^\times$ such that $C\alpha\notin\Z$, $|A+D+C\alpha|<2$, and
$$
f\bigl|\bigl[\gamma^{-1}-\xi(A)\bigr]
=\zeta f\biggl|[\gamma-\xi(D)]\bpm 1&\alpha\\&1\ebpm.
$$
Then $f|\gamma=\xi(D)f$.
\end{lemma}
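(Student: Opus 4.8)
The plan is to reduce the displayed hypothesis to a single relation of the form $h|[I-M]=0$ for a holomorphic function $h$ and a matrix $M\in\SL_2(\R)$ that is elliptic of infinite order, and then invoke Lemma~\ref{lem:weil}. The natural choice is $h=f\bigl|[\gamma-\xi(D)]\sm1&\alpha\\&1\esm$, since this is the object appearing on the right-hand side of the hypothesis. First I would rewrite the left-hand side $f|[\gamma^{-1}-\xi(A)]$ so that it too is expressed through $h$: multiplying the hypothesis on the right by $\sm1&-\alpha\\&1\esm$ and by $\gamma$ gives an expression for $f|[I-\xi(A)\gamma]$, and then slashing by a suitable further matrix should bring it into the shape $h|(\text{something})$. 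Concretely, one expects an identity of the form
$$
f\bigl|[\gamma^{-1}-\xi(A)] = \zeta\,h
\quad\Longrightarrow\quad
h\Bigl|\Bigl[I - \zeta^{-1}\sm1&-\alpha\\&1\esm\gamma\sm1&\alpha\\&1\esm\Bigr]=0,
$$
after using $f|\gamma^{-1}=\xi(A)^{-1}\xi(A)f+\dots$ — that is, after replacing $f|\gamma^{-1}$ via the slash-action identities and the relation $f|\gamma=\xi(D)f+\dots$ implicit in the definition of $h$. The bookkeeping here is the routine part: one tracks how $\xi(A)$, $\xi(D)$ and $\zeta$ combine, using $\xi(A)\xi(D)\equiv\xi(AD)\equiv1$ modulo the $C$-entry (valid since $AD-BC=1$ and $C\equiv0\pmod N$).

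Next I would identify the matrix $M$. After the manipulation above, $M$ should be (up to the root of unity $\zeta$, which Lemma~\ref{lem:weil} allows) conjugate or otherwise directly related to $\sm1&-\alpha\\&1\esm\gamma\sm1&\alpha\\&1\esm$, whose trace is $\tr\gamma + C\alpha = A+D+C\alpha$. The hypotheses $|A+D+C\alpha|<2$ and $C\alpha\notin\Z$ are tailored exactly to this: the first ensures $M$ is elliptic, and the second ensures $\tr M=A+D+C\alpha\notin\Z$, so $M$ has irrational rotation angle and hence infinite order. (Note $A+D\in\Z$, so $\tr M\in\Z$ would force $C\alpha\in\Z$.) At this point Lemma~\ref{lem:weil} applies to $h$ and $M$, possibly with a nontrivial root of unity $\zeta'$ built from $\zeta$ and the scalars picked up along the way, yielding $h=0$.

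Finally, from $h=0$, i.e.\ $f\bigl|[\gamma-\xi(D)]\sm1&\alpha\\&1\esm=0$, I would slash by $\sm1&-\alpha\\&1\esm$ to conclude $f|[\gamma-\xi(D)]=0$, that is $f|\gamma=\xi(D)f$, as desired. The main obstacle I anticipate is the first step: carefully rearranging the hypothesis into the clean form $h|[I-M]=0$ while correctly tracking the multiplicative constant, since a wrong constant could in principle fail to be a root of unity and break the application of Lemma~\ref{lem:weil}. However, because $|\zeta|=1$ and all the $\xi$-values are roots of unity, the accumulated constant is automatically a root of unity, so the argument should go through; the only real care needed is in the matrix algebra identifying $M$ and verifying that its trace is precisely $A+D+C\alpha$.
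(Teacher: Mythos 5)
Your overall strategy coincides with the paper's: massage the hypothesis into a single relation $h|M=\zeta' h$ with $\zeta'$ a root of unity and $M$ elliptic of infinite order, then invoke Lemma~\ref{lem:weil}. However, the matrix you single out for $M$, namely the conjugate $\sm 1&-\alpha\\&1\esm\gamma\sm 1&\alpha\\&1\esm$, cannot be the right one: conjugation preserves the trace, so that matrix has trace $A+D\in\Z$, not $A+D+C\alpha$, and with it the argument collapses --- an integral trace of modulus less than $2$ gives an elliptic element of \emph{finite} order, to which Lemma~\ref{lem:weil} does not apply. The matrix that actually arises is a one-sided product involving $\gamma^{-1}$, for instance $\gamma^{-1}\sm 1&-\alpha\\&1\esm$, whose trace is $D+(C\alpha+A)=A+D+C\alpha$ as required.

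The correct bookkeeping is short. Take $h=f|[\gamma-\xi(D)]$ (your conjugated choice also works, but is unnecessary). Slashing the hypothesis on the right by $\sm 1&-\alpha\\&1\esm$ gives $f|[\gamma^{-1}-\xi(A)]\sm 1&-\alpha\\&1\esm=\zeta h$. Since $AD\equiv 1\pmod*{N}$ forces $\xi(A)\xi(D)=1$, one has $-\xi(D)\,f|[\gamma^{-1}-\xi(A)]=f-\xi(D)\,f|\gamma^{-1}=h|\gamma^{-1}$, whence $h|\gamma^{-1}\sm 1&-\alpha\\&1\esm=-\xi(D)\zeta\,h$. Lemma~\ref{lem:weil} now applies with the root of unity $-\xi(D)\zeta$ and the elliptic matrix of infinite order $\gamma^{-1}\sm 1&-\alpha\\&1\esm$, yielding $h=0$, i.e.\ $f|\gamma=\xi(D)f$. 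So your plan is sound and is essentially the paper's proof; the one genuine error is the identification of $M$ as a conjugate of $\gamma$ rather than the one-sided product with $\gamma^{-1}$.
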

\begin{proof}
We have
$$
-\xi(D)\zeta f|[\gamma-\xi(D)]
=-\xi(D)f\biggl|\bigl[\gamma^{-1}-\xi(A)\bigr]
\bpm 1&-\alpha\\&1\ebpm
=f\biggl|[\gamma-\xi(D)]
\gamma^{-1}\bpm 1&-\alpha\\&1\ebpm.
$$
Note that
$\tr\bigl(\gamma^{-1}\sm1&-\alpha\\&1\esm\bigr)=A+D+C\alpha$.
By hypothesis this is non-integral and has modulus less than $2$,
so $\gamma^{-1}\sm1&-\alpha\\&1\esm$ is elliptic of infinite order.
Applying Lemma~\ref{lem:weil}, we obtain $f|\gamma=\xi(D)f$.
\end{proof}

\begin{lemma}\label{lem:key}
Let $h,n,m\in\N$, and let
$q_1,\ldots,q_h$ be distinct primes with $q_j\nmid mn$ for all $j$.
For every $j$, let $s_{i,j} \subseteq \{1,\ldots,q_j-1\}$, with $s_{i_1,j}\cap s_{i_2,j}=\emptyset$
for all $i_1\neq i_2$ (we do not assume that $s_{i,j}\neq \emptyset$).
Let $S_{i,j} = \sum_{a\in s_{i,j}} e\big(\frac{na}{mq_j}\big)$.
Suppose that $s_{i,i}\neq \emptyset$ for every $i$.
Then $\det\bigl([S_{i,j}]_{1\le i,j\le h}\bigr)\neq 0$.
\end{lemma}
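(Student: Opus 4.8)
The plan is to argue by induction on $h$, expanding along a well-chosen row or column and using the key structural fact that $S_{i,i} \neq 0$ while, crucially, the ``diagonal'' contributions dominate in a suitable sense. The base case $h=1$ is immediate: $S_{1,1} = \sum_{a \in s_{1,1}} e(na/(mq_1))$ is a nonempty sum of $q_1$-th roots of unity times a fixed factor; since $q_1 \nmid mn$, these are genuine $q_1$-th roots of unity, and a nonempty sum of them cannot vanish because the minimal polynomial of $e(1/q_1)$ over $\Q$ (namely the $q_1$-th cyclotomic polynomial, as $q_1$ is prime) has degree $q_1 - 1$, while the partial sum corresponds to a polynomial of degree $< q_1 - 1$. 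This vanishing criterion for partial sums of $q$-th roots of unity (for $q$ prime) is the single clean number-theoretic input, and I expect to isolate it as the crux.

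For the inductive step, the natural move is to pass to a prime $p$-adic or cyclotomic valuation attached to $q_h$. Work in the ring $\Z[e(1/(mN'))]$ for a large enough modulus, or more simply observe that each entry $S_{i,h}$ lies in $\Q(e(1/(mq_h)))$. Since $q_h$ is coprime to all of $mn q_1 \cdots q_{h-1}$, the field $\Q(e(1/(mq_h)))$ contains $\Q(e(1/q_h))$ as a subfield ``orthogonal'' to $\Q(e(1/(mq_j)))$ for $j < h$: concretely, the Galois group $\operatorname{Gal}(\Q(e(1/q_h))/\Q) \cong (\Z/q_h\Z)^\times$ acts on the last column of the matrix while fixing all other columns. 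Expanding $\det([S_{i,j}])$ along the last column gives $\det = \sum_{i=1}^h (-1)^{i+h} S_{i,h} M_i$, where each minor $M_i$ is a polynomial in the $S_{\cdot,j}$ with $j<h$, hence fixed by this Galois action. If the determinant were zero, applying the Galois averaging (summing over $\sigma \in \operatorname{Gal}$, equivalently over $a \mapsto ab$ for $b$ coprime to $q_h$) would force $\sum_{i} (-1)^{i+h}\big(\sum_\sigma \sigma S_{i,h}\big) M_i = 0$; but $\sum_\sigma \sigma S_{i,h}$ is, up to the harmless factor coming from the $m$-part, a Ramanujan-type sum $c_{q_h}(\text{something})$ evaluated on the set $s_{i,h}$, and careful bookkeeping shows these averaged coefficients do not all vanish precisely because $s_{h,h} \neq \emptyset$. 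A cleaner route: restrict to a square $(h-1)\times(h-1)$ subsystem by deleting row and column $h$; by the induction hypothesis that minor $M_h$ is nonzero, so it suffices to show $S_{h,h}$ times (stuff fixed by $\operatorname{Gal}$) cannot be cancelled by the other terms — again reducing to the irreducibility of the $q_h$-th cyclotomic polynomial and the fact that a nonempty partial sum of $q_h$-th roots of unity is a $\Q$-linearly independent combination that cannot be annihilated by coefficients from the $\operatorname{Gal}(q_h)$-fixed subfield unless they vanish.

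The honest way to organize all this is: fix the prime $q_h$, let $K = \Q(e(1/(m q_1 \cdots q_h)))$ and let $G = \operatorname{Gal}(K/K_0)$ where $K_0$ is the fixed field corresponding to trivial action on $q_h$-power roots; then $G \cong (\Z/q_h\Z)^\times$, each entry $S_{i,j}$ with $j \neq h$ lies in $K_0$, and $G$ permutes the terms of $S_{i,h}$. Viewing $\det([S_{i,j}])$ as an element of $K_0[S_{1,h}, \ldots, S_{h,h}]$ that is $K_0$-linear in the last column, vanishing of the determinant is an equation over $K_0$ among the $q_h$-th roots of unity $\{e(a/q_h) : a \in \{1,\ldots,q_h-1\}\}$. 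Since these $q_h - 1$ roots of unity form a $\Q$-basis of $\Q(e(1/q_h))$ (as $q_h$ is prime), and $K_0 \cap \Q(e(1/q_h)) = \Q$, they remain $K_0$-linearly independent inside $K$. Reading off the coefficient of $e(a/q_h)$ for a fixed $a \in s_{h,h}$ (which is nonempty — this is where the hypothesis enters), one gets exactly $\pm M_h \cdot (\text{unit})= 0$, forcing $M_h = 0$; but $M_h = \det([S_{i,j}]_{1 \le i,j \le h-1})$ is nonzero by the induction hypothesis, a contradiction. The main obstacle is making the linear-independence step watertight — specifically checking that the $m$-part of the roots of unity does not interfere, i.e.\ that $e(na/(mq_h))$ for varying $a$ really does span a space on which the relevant Galois group acts freely and transitively on the ``$q_h$-part'' while the coefficients stay in a field meeting $\Q(e(1/q_h))$ trivially; this requires the coprimality $q_h \nmid mn$ in an essential way and a short argument with the Chinese remainder theorem decomposing $\Q(e(1/(mq_h))) = \Q(e(1/m)) \cdot \Q(e(1/q_h))$ as a compositum of linearly disjoint fields.
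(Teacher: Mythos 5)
Your final, organized version of the argument is correct and is essentially the paper's own proof: both proceed by induction on $h$, expand the determinant along the column attached to a single prime, use the disjointness of the sets $s_{i,j}$ together with $s_{i,i}\neq\emptyset$ to isolate exactly one cofactor as a coefficient, and then invoke the fact that $[\Q(e(1/(mq_1\cdots q_h))):\Q(e(1/(mq_2\cdots q_h)))]=q_1-1$ — equivalently, your statement that the $q_j-1$ nontrivial $q_j$-th roots of unity stay linearly independent over the field generated by the other roots of unity. The only cosmetic difference is that the paper sidesteps your CRT splitting of the $m$-part (and the resulting non-fixed factors $e(nau/m)$, which your opening paragraph glosses over but your closing paragraph correctly flags) by first reducing to $(m,n)=1$ and viewing the determinant as a polynomial of the form $xQ(x)$ with $\deg Q\le q_1-2$ evaluated at the primitive $mq_1$-th root of unity $e(n/(mq_1))$, with coefficients in the field generated by the remaining roots of unity, where it cannot vanish for degree reasons.
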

\begin{proof}
Replacing $(m,n)$ by $(m/\gcd(m,n),n/\gcd(m,n))$ if necessary, we may
assume without loss of generality that $(m,n)=1$.
We prove the claim by induction on $h$.

Suppose first that $h=1$. Each $e\big(\frac{na}{mq_1}\big)$ is the $a$th power of $e\big(\frac{n}{mq_1}\big) =:
\zeta_{mq_1}$, which is a primitive $mq_1$th root of unity. By
hypothesis $s_{1,1}$ is not empty, so
$S_{1,1}$ is the value at $\zeta_{mq_1}$ of a nonconstant
polynomial $P\in\Q[x]$. Note that
$P(x)=xQ(x)$ for some nonzero $Q\in\Q[x]$ (since $s_{1,1}\subseteq \{1,\ldots,q_1-1\}$), and that the degree of
$Q$ is at most $q_1-2$. The degree of the extension $\Q(\zeta_{mq_1})/\Q$ is
$\eul(mq_1) = \eul(m)\eul(q_1) \geq \eul(q_1) = q_1-1$.
Hence $S_{1,1}=P(\zeta_{mq_1})=\zeta_{mq_1}Q(\zeta_{mq_1})\neq
0$. This concludes the proof for $h=1$.

Suppose $h\geq 2$ and expand $\det[S_{i,j}]$ with respect to the first line.
We get an expression of the form $P(\zeta_{mq_1})$ for some polynomial
$P\in\Q(\zeta_{mq_2},\ldots,\zeta_{mq_h})[x]$.
We claim that $P$ is not constant. To see this, let $a\in s_{1,1}$
(such $a$ exists because $s_{1,1}\neq\emptyset$). Then
$a\notin s_{i,1}$ for any $i\neq 1$, since
$s_{i_1,1}\cap s_{i_2,1}=\emptyset$ for $i_1\neq i_2$. Thus, the coefficient of $x^a$ in $P(x)$
is the determinant of the cofactor matrix for $S_{1,1}$.
This determinant satisfies all hypotheses of the
lemma for $h-1$ and primes $q_2,\ldots,q_h$; hence it is nonzero by
the inductive hypothesis.

Note that $P(x)=xQ(x)$ for some nonzero
$Q\in\Q(\zeta_{mq_2},\ldots,\zeta_{mq_h})[x]$
(since each $s_{i,1}\subseteq\{1,\ldots,q_1-1\}$), and that the degree of $Q$ is $\leq q_1-2$. By coprimality assumptions, the degree
of the extension
$\Q(\zeta_{mq_1},\ldots,\zeta_{mq_h})/\Q(\zeta_{mq_2},\ldots,\zeta_{mq_h})$
is $\eul(mq_1q_2\cdots q_h)/ \eul(mq_2\cdots q_h)= \eul(q_1) = q_1-1$. Hence
$Q(\zeta_{mq_1})\neq0$. Thus $\det[S_{i,j}] = P(\zeta_{mq_1})= \zeta_{mq_1}Q(\zeta_{mq_1}) \neq 0$.
\end{proof}

\begin{lemma}\label{lem:nonvanishing}
Assume the hypotheses of Conjecture~\ref{main}, and suppose that $f$ is
not identically $0$. Then for any prime $q\nmid N$, there exists
$n\in\N$ such that $q\mid n$ and $\{f_n,g_n\}\ne\{0\}$.
\end{lemma}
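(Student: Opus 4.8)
The plan is to argue by contradiction: suppose $f\not\equiv0$ yet $f_n=g_n=0$ for every $n$ divisible by the given prime $q\nmid N$, and deduce that in fact $f\equiv0$. The crucial observation is that for a \emph{prime} modulus the Ramanujan sum is, up to an affine normalization, the indicator of divisibility by $q$: one has $c_q(n)=q-1$ when $q\mid n$ and $c_q(n)=-1$ otherwise, so $c_q(n)=q\,\mathbf{1}_{q\mid n}-1=q\,\mathbf{1}_{q\mid n}-c_1(n)$. Hence, for $\Re(s)$ large,
$$
\Lambda_f(s,c_q)+\Lambda_f(s,c_1)=q\,\Gamma_\C\!\bigl(s+\tfrac{k-1}2\bigr)\sum_{q\mid n}\frac{f_n}{n^{s+\frac{k-1}2}},
$$
and similarly for $g$. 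Under the vanishing hypothesis the right-hand side is identically zero, so $\Lambda_f(s,c_q)=-\Lambda_f(s,c_1)$ and $\Lambda_g(s,c_q)=-\Lambda_g(s,c_1)$; these identities hold first on a right half-plane and then on all of $\C$ by analytic continuation, since each of the four functions is entire by the hypotheses of Conjecture~\ref{main}.

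Next I would feed these identities into the two functional equations available. Taking $q=1$ in \eqref{e:fe_cq} gives $\Lambda_f(s,c_1)=i^kN^{\frac12-s}\Lambda_g(1-s,c_1)$, while \eqref{e:fe_cq} for the prime $q$ reads $\Lambda_f(s,c_q)=i^k\xi(q)(Nq^2)^{\frac12-s}\Lambda_g(1-s,c_q)$. Substituting $\Lambda_f(s,c_q)=-\Lambda_f(s,c_1)$ and $\Lambda_g(1-s,c_q)=-\Lambda_g(1-s,c_1)$ into the second equation, the two minus signs cancel and we obtain $\Lambda_f(s,c_1)=i^k\xi(q)(Nq^2)^{\frac12-s}\Lambda_g(1-s,c_1)$. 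Comparing this with the $q=1$ functional equation (and using $(Nq^2)^{\frac12-s}=N^{\frac12-s}q^{1-2s}$) yields
$$
i^kN^{\frac12-s}\bigl(1-\xi(q)q^{1-2s}\bigr)\Lambda_g(1-s,c_1)=0
$$
identically in $s$.

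To conclude: because $q$ is prime (so $q\ge2$) and $q\nmid N$ (so $\xi(q)\ne0$), the entire function $s\mapsto1-\xi(q)q^{1-2s}$ is not identically zero and has only a discrete set of zeros, so the displayed identity forces $\Lambda_g(1-s,c_1)\equiv0$. Since $\Gamma_\C$ vanishes nowhere, every $g_n$ is zero, and then the $q=1$ functional equation gives $\Lambda_f(s,c_1)\equiv0$, whence every $f_n$ is zero, contradicting $f\not\equiv0$. I do not anticipate a genuine obstacle here; the only points needing a little care are the propagation of the $L$-function identities from a half-plane to the whole plane (where the assumed entirety and finiteness of order are used) and the elementary fact that $q^{1-2s}$ is non-constant precisely because $q>1$ — which is exactly where the primality and coprimality hypotheses on $q$ enter.
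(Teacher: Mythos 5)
Your argument is correct and follows essentially the same route as the paper: both exploit that for prime $q$ the vanishing hypothesis forces $\Lambda_f(s,c_q)=-\Lambda_f(s,c_1)$ and $\Lambda_g(s,c_q)=-\Lambda_g(s,c_1)$, then play the $q=1$ and modulus-$q$ functional equations off against each other, deriving a contradiction from the non-constancy of $q^{1-2s}$ for $q>1$. The only (cosmetic) difference is in the endgame: the paper divides to get $\xi(q)q^{1-2s}=1$ directly, whereas you keep the identity in product form and conclude $\Lambda_g\equiv0$, hence $\Lambda_f\equiv0$ — a marginally more careful phrasing of the same step.
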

\begin{proof}
Suppose that the conclusion is false for some prime $q\nmid N$, so that
$f_n=g_n=0$ for every $n$ divisible by $q$.
Then we have
$f_nc_q(n)=-f_n$ and $g_nc_q(n)=-g_n$ for every $n$, so that
$$
-1=\frac{\Lambda_f(s,c_q)}{\Lambda_f(s,c_1)}
=\frac{\Lambda_g(1-s,c_q)}{\Lambda_g(1-s,c_1)}.
$$
On the other hand, \eqref{e:fe_cq} applied to $c_1$ and $c_q$ shows that
$$
\frac{\Lambda_f(s,c_q)}{\Lambda_f(s,c_1)}
=\xi(q)q^{1-2s}\frac{\Lambda_g(1-s,c_q)}{\Lambda_g(1-s,c_1)},
$$
so $\xi(q)q^{1-2s}=1$. Since $q>1$, this is a contradiction.
\end{proof}

\begin{lemma}\label{lem:log-gen}
Let $N$ be a prime, and for each $r\in\Z$ with $2\le|r|<\frac12N$,
let $\gamma_{r,1}\in\Gamma_0(N)$ be a matrix with top row $\sm r&-1\esm$.
Then any matrix $\sm A & B \\ CN & D \esm \in \Gamma_0(N)$ may be
written in the form
$\pm\tau_1\tau_2\cdots\tau_l$
with
$\tau_i\in\{T,T^{-1},W,W^{-1},\gamma_{r,1}^{-1}:2\le|r|<\frac12N\}$
for each $i=1,\ldots,l$, in such a way that
$$
\#\{i:\tau_i\in\{\gamma_{r,1}^{-1}\}\}\le\log_2(|A|).
$$
\end{lemma}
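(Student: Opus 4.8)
The plan is to induct on $|A|$, arranging that each factor $\gamma_{r,1}^{-1}$ we introduce halves $|A|$. For the base case $|A|\le 1$: since $\gcd(A,CN)=1$ and $N\ge 2$ we cannot have $A=0$, so $A=\pm1$, and the relation $\det=1$ gives immediately $\bpm 1&B\\CN&D\ebpm=W^{C}T^{B}$ and $\bpm -1&B\\CN&D\ebpm=-W^{-C}T^{-B}$; thus $M$ already has the required shape using no $\gamma_{r,1}^{-1}$ factor, consistent with $\log_2|A|=0$.

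For the inductive step, assume $|A|\ge 2$. I would first reduce $M$ using only $T,W$ on the left: left multiplication by $W^{k}$ replaces the bottom-left entry $CN$ by $N(C+kA)$, and by $T^{j}$ replaces $A$ by $A+jCN$. Alternately reducing $C$ modulo $A$ (whenever $|C|>\tfrac12|A|$) and $A$ modulo $CN$ (whenever $C\ne 0$ and $|A|>\tfrac12|CN|$) strictly lowers $|A|+|C|$ at each genuine step, while $|A|$ never increases and never reaches $0$; so after finitely many steps we reach $M'=\bpm A'&B'\\C'N&D'\ebpm$ with $M=uM'$ for some word $u$ in $T^{\pm1},W^{\pm1}$, where $|A'|\le|A|$ and either $C'=0$ or else $|C'|\le\tfrac12|A'|$ and $|A'|\le\tfrac12|C'N|$. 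If $C'=0$, then $|A'|=1$ and $M$ is already of the desired form. Otherwise $|C'|\ge1$ and $2|C'|\le|A'|\le\tfrac12|C'|N$, so $|A'|\ge 2$ and $N\ge 4$ (hence $N$ is an odd prime).

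Now the crucial step: take $r_{0}$ to be an integer nearest to $C'N/A'$, breaking ties toward the value of smaller absolute value, so $|r_{0}-C'N/A'|\le\tfrac12$. From $|A'|\le\tfrac12|C'|N$ we get $|C'N/A'|\ge 2$, whence $|r_{0}|\ge 2$; from $|A'|\ge 2|C'|$ we get $|C'N/A'|\le\tfrac12N$, and since $N$ is odd this forces $|r_{0}|\le\tfrac12(N-1)<\tfrac12N$ (the equality $|C'N/A'|=\tfrac12N$ can occur only when $|C'|=1$, $|A'|=2$, and there the tie-break yields $|r_{0}|=\tfrac12(N-1)$). Thus $r_{0}\in\{r\in\Z:2\le|r|<\tfrac12N\}$, so $\gamma_{r_{0},1}$ is available; set $M''=\gamma_{r_{0},1}M'$. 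Since $\gamma_{r_{0},1}$ has first row $(r_{0},-1)$, the top-left entry of $M''$ is $A''=r_{0}A'-C'N$, with $|A''|=|A'|\cdot|r_{0}-C'N/A'|\le\tfrac12|A'|\le\tfrac12|A|$, and $A''\ne0$ because $\gcd(A',C'N)=1$ and $|A'|\ge2$. Hence $1\le|A''|\le\tfrac12|A|$, and by the inductive hypothesis $M''=\pm\tau_{1}\cdots\tau_{l}$ with at most $\log_2|A''|\le\log_2|A|-1$ of the $\tau_{i}$ equal to some $\gamma_{r,1}^{-1}$. Since $M=u\,\gamma_{r_{0},1}^{-1}M''$, concatenating the word $u$, the single factor $\gamma_{r_{0},1}^{-1}$, and the word for $M''$ presents $M$ in the required form using at most $1+(\log_2|A|-1)=\log_2|A|$ factors $\gamma_{r,1}^{-1}$, which closes the induction.

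The step I expect to be the main obstacle is establishing $2\le|r_{0}|<\tfrac12N$: this is exactly what dictates the hypothesis $|r|<\tfrac12N$ (rather than $|r|<N$), and it is what forces one to drive $M$ all the way to the doubly-reduced form $2|C'|\le|A'|\le\tfrac12|C'|N$ before invoking a $\gamma$ — reducing only $C$ modulo $A$, or only $A$ modulo $CN$, fails to confine the ratio $C'N/A'$ to the window $[2,\tfrac12N]$ that is needed here. Everything else (the base-case matrix identities, termination of the $T,W$ reduction, and the elementary nearest-integer estimates) is routine.
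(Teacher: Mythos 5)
Your proposal is correct and follows essentially the same route as the paper's proof: a Euclidean-style reduction of $A$ and $C$ via left multiplication by $T^{\pm1},W^{\pm1}$, followed by left multiplication by $\gamma_{r,1}$ with $r$ the nearest integer to $C'N/A'$ (ties broken toward $0$), which halves the top-left entry; you have merely recast the paper's iteration as an explicit induction on $|A|$. Your careful handling of the boundary case $|C'N/A'|=\tfrac12N$, where the tie-break is what keeps $|r_0|<\tfrac12N$, fills in a detail the paper passes over silently.
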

\begin{proof}
If $C=0$ then $\sm A & B \\ CN & D \esm = \pm T^{\alpha}$ for some
choice of sign and $\alpha\in\Z$. In the general case we may multiply
on the left by a power of $T$ to replace $A$ by any integer $A'$
such that $A'\equiv A\pmod*{CN}$. Choosing $A'$ such that
$|A'|\le\frac12|CN|$, we also have $|A'|\le|A|$. Similarly we
may multiply on the left by $W$ and replace $C$ by any integer
$C'\equiv C\pmod*{A'}$ with $|C'|\le\frac12|A'|$.

Repeating this process will either lead to $C=0$ or will eventually
stagnate. Thus we may assume now that $|A|\le\frac12|CN|$ and
$0<|C|\le\frac12|A|$. In particular, this implies that $N\ge4$, so $N$
is an odd prime.  Let $r$ be the nearest integer to the fraction $CN/A$
(note that $A\ne0$ since $(A,N)=1$), rounded toward $0$ in the case of
a tie. We have $2\le|CN/A|\le\frac12N$, and thus $2\le|r|<\frac12N$.
Multiplying on the left by $\gamma_{r,1}$, the new top-left corner is
$rA-CN=A(r-\frac{CN}{A})$, which does not exceed $\frac12|A|$ in absolute
value. Thus, by repeating this process we eventually end up in the case
$C=0$, having used at most $\log_2(|A|)$ matrices $\gamma_{r,1}$.
\end{proof}

\begin{lemma}\label{lem:diagonal}
Let $A$ be an $n\times n$ matrix over a ring, with non-zero rows.
Then there exists $m\in\{1,\ldots,n\}$ and $n\times n$ permutation
matrices $P$ and $Q$ such that $PAQ$ takes the block form
$\left(\begin{array}{c|c}
\hat{A}&0\\
\hline
C&D\end{array}\right)$,
where $\hat{A}$ is of size $m\times m$ and has non-zero diagonal
entries.
\end{lemma}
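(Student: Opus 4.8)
The plan is to recast the statement in the language of bipartite graphs and then combine the marriage theorem with a minimality argument. Associate to $A$ the bipartite graph $G$ on vertex classes $R=\{1,\dots,n\}$ (rows) and $C=\{1,\dots,n\}$ (columns), joining $i\in R$ to $j\in C$ precisely when the $(i,j)$ entry of $A$ is nonzero; the hypothesis that $A$ has no zero row says exactly that every vertex of $R$ has positive degree. For $S\subseteq R$ write $\Gamma(S)\subseteq C$ for the set of columns incident to some row of $S$. A block form as in the statement corresponds to a choice of $m$ rows $S_0$ and $m$ columns $T_0$, a bijection between them realised by nonzero entries of $A$ (this gives the nonzero diagonal of $\hat A$), and the property $\Gamma(S_0)\subseteq T_0$ (so every nonzero entry of a row in $S_0$ lies in a column of $T_0$, which gives the zero block in the upper right); then $P$ and $Q$ are taken to be permutations carrying $S_0$ and $T_0$ to the first $m$ rows and columns compatibly with the bijection.

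First I would note that $\Gamma(R)\subseteq C$ has at most $n=|R|$ elements, so the family $\mathcal F=\{S\subseteq R:S\ne\emptyset,\ |\Gamma(S)|\le|S|\}$ is nonempty. Choose $S_0\in\mathcal F$ of minimum cardinality $m:=|S_0|\ge 1$. The next step is to check that $|\Gamma(S_0)|=m$. Indeed $|\Gamma(S_0)|\le m$ by definition of $\mathcal F$, and if $|\Gamma(S_0)|\le m-1$ then either $m=1$ — impossible, since the single row of $S_0$ is nonzero and hence meets at least one column — or $m\ge 2$, in which case deleting any one row from $S_0$ yields a strictly smaller member of $\mathcal F$, contradicting minimality.

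Next I would produce the matching. Set $T_0=\Gamma(S_0)$, so $|T_0|=m$. If the subgraph of $G$ induced on $S_0\cup T_0$ had no perfect matching, Hall's theorem would furnish $S'\subseteq S_0$ with $|\Gamma(S')\cap T_0|<|S'|$; since $\Gamma(S')\subseteq\Gamma(S_0)=T_0$ this reads $|\Gamma(S')|<|S'|$, so $S'$ is a nonempty element of $\mathcal F$ with $|S'|\le m$, forcing $S'=S_0$ and hence $|\Gamma(S_0)|<m$, contradicting the previous step. So there is a bijection $\sigma\colon S_0\to T_0$ with the $(i,\sigma(i))$ entry of $A$ nonzero for every $i\in S_0$. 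Finally, enumerate $S_0=\{i_1,\dots,i_m\}$, put $j_k=\sigma(i_k)$, extend $i_1,\dots,i_m$ and $j_1,\dots,j_m$ arbitrarily to enumerations of $R$ and $C$, and let $P,Q$ be the permutation matrices for which the $(k,\ell)$ entry of $PAQ$ equals the $(i_k,j_\ell)$ entry of $A$ (these may be taken over the prime subring, so $PAQ$ merely permutes the entries of $A$; no commutativity is needed). For $k\le m$ the $(k,k)$ entry of $PAQ$ is the $(i_k,\sigma(i_k))$ entry of $A$, which is nonzero; for $k\le m<\ell$ the column $j_\ell$ lies outside $T_0=\Gamma(S_0)$ and so is not incident to $i_k\in S_0$, making the $(k,\ell)$ entry zero. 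This is exactly the claimed block decomposition.

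The only real content is the middle step, where Hall's marriage theorem is combined with the minimality of $S_0$ to obtain \emph{simultaneously} the identity $|\Gamma(S_0)|=m$ and the existence of the matching: a naive maximum-matching argument produces a set $S$ with $|\Gamma(S)|<|S|$ but not one admitting a perfect matching onto its neighbourhood, and it is exactly the passage to a minimal such $S$ that reconciles the two requirements. Everything else — the edge case $m=1$, and the bookkeeping with $P$ and $Q$ — is routine.
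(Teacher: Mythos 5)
Your proof is correct and follows essentially the same route as the paper: both take a minimal nonempty set of rows $S_0$ whose neighbourhood (the paper's $m_S$, your $\Gamma(S)$) is no larger than $S_0$ itself, use the nonzero-row hypothesis plus minimality to force equality, and then invoke Hall's marriage theorem to extract the matching that becomes the nonzero diagonal of $\hat A$, with the zero block coming from $\Gamma(S_0)$ being contained in the chosen columns. The only difference is presentational (bipartite-graph language, and deriving the Hall condition by contradiction rather than directly from minimality).
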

\begin{proof}
Denote the entries of $A$ by $a_{ij}$.
For any $S\subseteq\{1,\ldots,n\}$, define
$$
m_S=\#\{j:a_{ij}\ne0\text{ for some }i\in S\}.
$$
Note that for $S=\{1,\ldots,n\}$ we have $m_S\le\#S$. Hence, there is
a minimal non-empty set $R\subseteq\{1,\ldots,n\}$ satisfying $m_R\le\#R$.
Since $A$ has non-zero rows, we have $m_S>0$ whenever $S\ne\emptyset$.
From this and the minimality of $R$ it follows that $m_R=\#R$.
Moreover, for any $S\subseteq R$ we have $m_S\ge\#S$.

By Hall's marriage theorem \cite{Hall1935}, it follows that there is a subset
$C\subseteq\{1,\ldots,n\}$ and a bijection $i:C\to R$ such that
$a_{i(j)j}\ne0$ for every $j\in C$. Writing $m=\#C=\#R$ and replacing $A$
by $PAQ$ for appropriate permutation matrices $P$ and $Q$, we may assume
that $C=R=\{1,\ldots,m\}$ and $i(j)=j$.
The block form of $A$ then follows from the definition of $m_S$.
\end{proof}

\begin{lemma}\label{lem:divisors}
Given $p_0,a,q\in\Z$ with $p_0\ne0$ and $(a,q)=1$, define
$$
P(p_0;a,q)=\{p\text{ prime}:
\exists d\in\N\text{ such that }d\equiv a\pmod*{q}
\text{ and }p\equiv p_0\pmod*{d}\}
$$
and
$$
P(p_0;q)=\bigcap_{\substack{1\le a\le q\\(a,q)=1}}P(p_0;a,q).
$$
Then
$$
\#\{p\in P(p_0;q):p\le x\}=(1+o(1))\pi(x)
\quad\text{as }x\to\infty.
$$
\end{lemma}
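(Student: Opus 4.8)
The plan is to establish the stronger statement that for each fixed reduced residue $a\pmod*{q}$ the set $\{p\le x:p\notin P(p_0;a,q)\}$ has density zero in the primes, and then to take a union bound over the $\varphi(q)$ reduced classes. The key elementary point is that if some \emph{prime} $\ell\equiv a\pmod*{q}$ divides $p-p_0$, then $d=\ell$ already witnesses $p\in P(p_0;a,q)$; so it suffices to show that for almost all primes $p$ there is a prime $\ell\equiv a\pmod*{q}$ with $\ell\mid p-p_0$.

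To this end I would fix a parameter $z$ and set
$$
\mathcal{L}_a(z)=\{\ell\text{ prime}:\ell\equiv a\pmod*{q},\ \ell\le z,\ \ell\nmid p_0\},
\qquad
M=M_a(z)=\prod_{\ell\in\mathcal{L}_a(z)}\ell,
$$
noting that the condition $\ell\nmid p_0$ excludes only the finitely many prime divisors of $p_0\ne0$. If $p\notin P(p_0;a,q)$ then no $\ell\in\mathcal{L}_a(z)$ divides $p-p_0$, i.e.\ $p\not\equiv p_0\pmod*{\ell}$ for every $\ell\in\mathcal{L}_a(z)$. Since $\ell\nmid p_0$, the residue $p_0\bmod\ell$ is nonzero, so for each such $\ell$ there are exactly $\ell-2$ admissible residues mod $\ell$ (avoiding both $0$ and $p_0\bmod\ell$); by the Chinese remainder theorem these assemble into $\prod_{\ell\mid M}(\ell-2)$ reduced residue classes modulo the fixed integer $M$. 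Hence, apart from the $O_z(1)$ primes that are $\le M$,
$$
\#\{p\le x:p\notin P(p_0;a,q)\}
\le\#\{p\le x:p\bmod M\text{ lies in an admissible class}\}.
$$
Applying the prime number theorem in arithmetic progressions to the fixed modulus $M$, the right-hand side is $(1+o(1))\,\pi(x)\prod_{\ell\in\mathcal{L}_a(z)}\bigl(1-\tfrac1{\ell-1}\bigr)$ as $x\to\infty$ with $z$ held fixed, so that
$$
\limsup_{x\to\infty}\frac{\#\{p\le x:p\notin P(p_0;a,q)\}}{\pi(x)}
\le\prod_{\ell\in\mathcal{L}_a(z)}\Bigl(1-\tfrac1{\ell-1}\Bigr)
\qquad\text{for every }z.
$$

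Finally I would let $z\to\infty$. The series $\sum_{\ell\equiv a\,(q)}1/\ell$ diverges (a standard quantitative form of Dirichlet's theorem), hence so does $\sum_{\ell\equiv a\,(q)}1/(\ell-1)$, and therefore the product on the right tends to $0$. Thus $\#\{p\le x:p\notin P(p_0;a,q)\}=o(\pi(x))$; summing over the $\varphi(q)$ reduced residues $a\pmod*{q}$ gives $\#\{p\le x:p\notin P(p_0;q)\}=o(\pi(x))$, which is equivalent to the assertion.

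I do not expect a genuine obstacle here: the argument is a soft, variance-free sieve whose only inputs are the prime number theorem in a fixed progression and the divergence of $\sum1/\ell$ over a progression. The one spot needing a little care is the residue bookkeeping modulo $M$ — that primes avoiding $p_0\pmod*{\ell}$ for all $\ell\mid M$ occupy precisely $\prod_{\ell\mid M}(\ell-2)$ of the $\prod_{\ell\mid M}(\ell-1)$ reduced classes — but because $M$ is fixed once $z$ is, no uniformity in the modulus is required; if one preferred, the same upper bound follows from the Brun--Titchmarsh inequality up to a harmless constant factor.
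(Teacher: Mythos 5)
Your argument is correct, and it takes a genuinely different --- and much softer --- route than the paper's. You reduce the lemma to finding a single \emph{prime} divisor $\ell\equiv a\pmod*{q}$ of $p-p_0$, and you eliminate the exceptional primes with a fixed-modulus sieve: for each cutoff $z$, the primes $p$ for which no such $\ell\le z$ divides $p-p_0$ fall (apart from $O_z(1)$ of them) into a fixed collection of reduced classes modulo $M=\prod\ell$ of relative density $\prod_{\ell}\bigl(1-\tfrac1{\ell-1}\bigr)$, and this tends to $0$ as $z\to\infty$ because $\sum_{\ell\equiv a\pmod*{q}}1/\ell$ diverges. The only inputs are the prime number theorem in a fixed progression and Mertens/Dirichlet for the class $a\pmod*{q}$; the residue bookkeeping is fine (including the degenerate case $\ell=2$, where $\ell-2=0$ and the upper bound is trivially $0$), and the final union bound over the $\varphi(q)$ classes is legitimate since $q$ is fixed. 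The paper instead follows Hall's second-moment method: it studies $d(p-p_0;a)=\sum_{d\mid p-p_0,\,d\equiv a}1$, bounds the variance of $d(p-p_0;a)$ about $d_{\chi_0}(p-p_0)/\varphi(q)$ using an upper-bound sieve for primes with $p-p_0=np'$, and invokes Halberstam's normal-order result for $\omega(p-p_0)$ to conclude $d(p-p_0;a)\gg_{\eps}(\log x)^{\log 2-\eps}$ for almost all $p\le x$. That approach proves something strictly stronger --- many divisors in each class, not merely one, with uniformity in $q$ available up to $2^{(1-\eps)\log\log x}$ --- but since the lemma as stated fixes $q$ and asserts only nonemptiness, your elementary argument fully suffices for the application in Theorem~\ref{thm:almostall}.
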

\begin{proof}
This is proven for $p_0=1$ in~\cite{Hall1973}, uniformly for
$q\le2^{(1-\varepsilon)\log\log{x}}$. One can generalize the proof to
all $p_0\neq0$, and if one is not concerned with the uniformity in $q$
a simpler proof suffices. For completeness we give the argument here.

For a character $\chi$ modulo $q$ and $a\in\Z$ with $(a,q)=1$ let
$$
d_\chi(n):=\sum_{d\mid n}\chi(d),\qquad
d(n;a):=\sum_{\substack{d\mid n\\d\equiv a\pmod*{q}}}1,
$$
so that we have
\begin{align}\label{charexp}
d(n;a)=\frac1{\varphi(q)}\sum_{\chi\pmod*{q}}\overline\chi(a)d_\chi(n).
\end{align}
Then, it suffices to prove that for almost all primes $p$, $d(p-p_0;a)>0$
for all $a\pmod*{q}$ with $(a,q)=1$.

As in \cite{Hall1973} we start by observing that if $p',n$ are coprime
with $p'$ prime, then by multiplicativity and the Cauchy--Schwarz
inequality one has
$$
\Big(d(np';a)-\frac{d_{\chi_0}(np')}{\varphi(q)}\Big)^2
\leq 16\sum_{\substack{b\pmod*{q}\\(b,q)=1}}
\Big(d(n;b)-\frac{d_{\chi_0}(n)}{\varphi(q)}\Big)^2,
$$
where $\chi_0$ is the trivial character modulo $q$.
Denoting by $\omega(n)$ the number of distinct prime factors of $n$,
Halberstam \cite{Halberstam} proved that $\omega(p-p_0)$ has normal order
$\log \log p$. Thus, $\omega(p-p_0)\leq 2\log\log p$ for almost all $p\leq
x$ and so, in particular, $p-p_0$ almost always has a prime factor $p'$
greater than $r(x):=x^{\frac1{4\log \log x}}$ as $x\to\infty$. Also for
almost all such $p$ we have $(p',(p-p_0)/p')=1$ since only $o(\pi(x))$
integers $\leq x$ have such a large repeated prime factor. Denoting
by $\sum'$ the restriction of the sum to primes with such properties, we then have
\begin{align*}
\sumprime_{p-p_0\leq x}\Big(d(p-p_0;a)
-\frac{d_{\chi_0}(p-p_0)}{\varphi(q)}\Big)^2
&\leq 16\sum_{\substack{b\pmod*{q}\\(b,q)=1}}
\sum_{\substack{p-p_0=np'\leq x,\\ p,p'\text{ primes},\\ p'\geq r(x),\,(n,p')=1}}\Big(d(n;b)-\frac{d_{\chi_0}(n)}{\varphi(q)}\Big)^2\\
&\ll\max_{\substack{b\pmod*{q}\\(b,q)=1}}\sum_{n\leq \frac{x}{r(x)}}
\Big(d(n;b)-\frac{d_{\chi_0}(n)}{\varphi(q)}\Big)^2\sum_{\substack{p-p_0=np'\leq x,\\ p,p'\text{ primes}}}1,
\end{align*}
where all the implicit constants here and below are allowed to depend on $q,p_0$.
By~\cite[Ch.~II~Satz~4.2]{Prachar} (cf.\ Satz~4.6 for the case $p_0=1$),
with $(a_1,b_1,a_2,b_2)=(1,0,n,p_0)$, the inner sum is
$O(\frac{x}{\varphi(n)\log^2( x/n)})
=O(\frac{x(\log \log x)^2}{\varphi(n)\log^2 x})$ since $n\leq x/r(x)$.
Thus, using also~\eqref{charexp} the above is
\begin{align}\label{alfi}
\ll \frac{x(\log \log x)^2}{\log^2x}\max_{\substack{b\pmod*{q}\\(b,q)=1}}
\sum_{\chi_0\neq\chi_1,\chi_2\pmod*{q}}\frac{\chi_1(b)\overline\chi_2(b)}{\varphi(q)^2}\sum_{n\leq \frac{x}{r(x)}}\frac{d_{\chi_1}(n)d_{\chi_2}(n)}{\varphi(n)}.
\end{align}
An easy exercise shows that for $\Re(s)>1$,
$$
\sum_{n\geq1}\frac{d_{\chi_1}(n)d_{\chi_2}(n)}{\varphi(n)n^s}=L(1+s,\chi_0)L(1+s,\chi_1)L(1+s,\chi_2)L(1+s,\chi_1\chi_2)R(s)
$$
where $R(s)$ is an Euler product which is convergent and uniformly bounded on $\Re(s)\geq-\frac14$. It follows that the inner sum in~\eqref{alfi} is $O(\log^2 x)$. Thus we find
$$
\sumprime_{p-p_0\leq x}\Big(d(p-p_0;a)-\frac{d_{\chi_0}(p-p_0)}{\varphi(q)}\Big)^2\ll x(\log \log x)^2
$$
and so we deduce that for $\eps>0$ we must have 
$$
d(p-p_0;a)-\frac{d_{\chi_0}(p-p_0)}{\varphi(q)}\ll_{\eps} (\log x)^{\frac12+\eps}
$$
for almost all $p\leq x$.
Finally,  for almost all primes $p\leq x$ we have $\omega(p-p_0)\ge(1-\eps)\log \log x $ and so
$$
d_{\chi_0}(p-p_0)\geq 2^{\omega(p-p_0)-\omega(q)}\gg_{\eps} (\log x)^{\log 2-\eps}.
$$
Since $\log 2>1/2$ we deduce that for almost all primes $p\leq x$ we have
$$
d(p-p_0;a)\gg_{\eps} (\log x)^{\log 2-\eps},
$$
as desired.
\end{proof}

\begin{lemma}\label{lem:Lambda_chi}
Let $f\in M_k(\Gamma_0(N),\xi)$, and define $g$ by \eqref{eq:gdef}.
Let $f_n$ and $g_n$ denote the Fourier coefficients
of $f$ and $g$, respectively, and for any character $\chi$ of modulus
$q$ coprime to $N$, define $\Lambda_f(s,c_\chi)$ and
$\Lambda_g(s,c_{\overline\chi})$ as in \eqref{e:Lambda}.
Then $\Lambda_f(s,c_\chi)$ and
$\Lambda_g(s,c_{\overline\chi})$ continue to entire functions, apart
from at most simple poles at $s=\frac{1\pm k}2$, and satisfy the
functional equation \eqref{eq:fe_cchi}.
\end{lemma}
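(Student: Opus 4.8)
\emph{Plan.} The idea is to reduce the statement to Hecke's classical Mellin-transform argument, applied to a single additive twist assembled from the $c_\chi(n)$. Write $\chi_0$ for the trivial character mod $q$ and $\delta_{\chi,\chi_0}$ for the Kronecker symbol. Since $c_\chi(n)=\sum_{(a,q)=1}\chi(a)e(an/q)$, the twisted Dirichlet series is an additive twist:
\[
\phi_f(z):=\sum_{n=1}^\infty f_nc_\chi(n)e(nz)=\sum_{\substack{a\pmod*{q}\\(a,q)=1}}\chi(a)\,f\!\left(z+\tfrac aq\right)-\delta_{\chi,\chi_0}\varphi(q)f_0,
\]
and likewise $\phi_g(z):=\sum_{n\ge1}g_nc_{\overline{\chi}}(n)e(nz)=\sum_{(a,q)=1}\overline{\chi}(a)\,g\bigl(z+\tfrac aq\bigr)-\delta_{\chi,\chi_0}\varphi(q)g_0$. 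As $f_n,g_n$ have polynomial growth and $|c_\chi(n)|\le\varphi(q)$, the functions $\phi_f,\phi_g$ are holomorphic on $\HH$ and decay exponentially as $\Im z\to\infty$; and for $\Re(s)$ large, Fubini gives $\Lambda_f(s,c_\chi)=2\int_0^\infty\phi_f(iy)\,y^{s+\frac{k-1}2}\,\frac{dy}{y}$, and the analogous formula for $\Lambda_g(s,c_{\overline{\chi}})$.

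The second step is a modular transformation law for $\phi_f$. Put $\sigma=\sm0&-N^{-1/2}\\ N^{1/2}&0\esm$, so $g=f|\sigma$ by \eqref{eq:gdef}, and $\sigma_q=\sm0&(N^{1/2}q)^{-1}\\ -N^{1/2}q&0\esm$, which induces $z\mapsto-1/(Nq^2z)$ and, on the imaginary axis, has automorphy factor $i^k(N^{1/2}q)^{-k}y^{-k}$ for the principal branch. For each $a$ coprime to $q$ choose $c=c(a)$ with $Nac\equiv-1\pmod*{q}$; then $\gamma'_{q,a}:=\sm q&-a\\-Nc&(1+Nac)/q\esm\in\Gamma_0(N)$, and one verifies the matrix identity (essentially the computation in the proof of Lemma~\ref{lem:cq}) $\sm1&a/q\\0&1\esm=(\gamma'_{q,a})^{-1}\sigma\sm1&c/q\\0&1\esm\sigma_q$. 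Applying $f\,|\,\cdot$, using $f|\gamma=\xi(\gamma)f$ on $\Gamma_0(N)$, $f|\sigma=g$, and the fact that the lower-right entry $s=(1+Nac)/q$ of $\gamma'_{q,a}$ satisfies $qs\equiv1\pmod*{N}$ (so $\xi(\gamma'_{q,a})=\xi(s)=\overline{\xi(q)}$), gives $f(z+\tfrac aq)=\xi(q)\bigl(g\big|\sm1&c/q\\0&1\esm\bigr)\big|\sigma_q$. As $a$ runs over the units mod $q$ so does $c(a)$, with $a\equiv-\overline{Nc(a)}\pmod*{q}$ and hence $\chi(a)=\overline{\chi(-N)}\,\overline{\chi}(c(a))$; summing against $\chi(a)$ and restricting to $z=iy$ yields
\[
\phi_f(iy)+\delta_{\chi,\chi_0}\varphi(q)f_0=\xi(q)\,\overline{\chi(-N)}\,i^k(N^{1/2}q)^{-k}y^{-k}\Bigl(\phi_g\bigl(\tfrac{i}{Nq^2y}\bigr)+\delta_{\chi,\chi_0}\varphi(q)g_0\Bigr).
\]

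Finally I would run Hecke's argument. Splitting $\int_0^\infty=\int_{y_0}^\infty+\int_0^{y_0}$ at the fixed point $y_0=(Nq^2)^{-1/2}$, substituting $y\mapsto(Nq^2y)^{-1}$ in the second integral, and inserting the transformation law, one writes $\Lambda_f(s,c_\chi)$ as a sum of the entire function $2\int_{y_0}^\infty\phi_f(iy)y^{s+\frac{k-1}2}\frac{dy}{y}$, the constant $2\xi(q)\overline{\chi(-N)}i^k(Nq^2)^{\frac12-s}$ times the entire function $\int_{y_0}^\infty\phi_g(iy)y^{1-s+\frac{k-1}2}\frac{dy}{y}$, and — only when $\chi=\chi_0$ — two elementary rational terms with simple poles at $s=\frac{1-k}2$ (from $f_0$) and $s=\frac{1+k}2$ (from $g_0$); this gives the continuation with at most these poles. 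Repeating the computation with $(g,\overline{\chi})$ in place of $(f,\chi)$ is legitimate, since $g\in M_k(\Gamma_0(N),\overline{\xi})$ and $g|\sigma=f|\sigma^2=f|(-I)=\xi(-1)f=(-1)^kf$, the sign being exactly the one forced by $\xi(-1)=(-1)^k$; this produces the parallel representation of $\Lambda_g(1-s,c_{\overline{\chi}})$, and multiplying it by $i^k\xi(q)\overline{\chi(-N)}(Nq^2)^{\frac12-s}$ and using $(i^k)^2=(-1)^k$ shows that the two entire integrals (with their coefficients) and the elementary polar terms coincide with those in the representation of $\Lambda_f(s,c_\chi)$. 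This is precisely \eqref{eq:fe_cchi}.

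The delicate part is purely bookkeeping: pinning down the branch of the automorphy factor of $\sigma_q$ (the source of $i^k$, and where $\xi(-1)=(-1)^k$ becomes essential), verifying $\xi(s)=\overline{\xi(q)}$ from $qs\equiv1\pmod*{N}$, checking that the reindexing $a\mapsto c(a)$ contributes exactly the factor $\overline{\chi(-N)}$ and converts $\chi$-twists into $\overline{\chi}$-twists, and confirming that the elementary polar terms on the two sides of \eqref{eq:fe_cchi} transform correctly. None of these is deep, but they must all be carried out consistently.
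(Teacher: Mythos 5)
Your proposal is correct and follows essentially the same route as the paper: both form the additive twist $\sum_{(a,q)=1}\chi(a)\,f|\sm1&a/q\\&1\esm=\sum_n f_nc_\chi(n)e(nz)$, use the matrix identity from the proof of Lemma~\ref{lem:cq} together with $f|\gamma=\xi(\gamma)f$ and the reindexing $Nac\equiv-1\pmod*{q}$ to obtain the transformation law $f_\chi|\sm&-1\\Nq^2&\esm=\xi(q)\overline{\chi(-N)}\,g_{\overline\chi}$, and then apply Hecke's Mellin-transform argument. The only difference is that you write out the split integral and the polar terms explicitly where the paper simply cites \cite[Theorem~4.3.5]{miyake}.
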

\begin{proof}
Define
\begin{equation}\label{e:fchi}
	f_{\chi}(z) := 
	\sum_{\substack{a\pmod*{q}\\(a,q)=1}} \chi(a) f\bigg| \bpm 1 & \frac{a}{q}\\ & 1\ebpm 
	=
	\sum_{n=0}^\infty f_n c_\chi(n) e(nz),
\end{equation}
and similarly for $g_{\overline\chi}$.
Then
\begin{equation}
	f_\chi\left|\bpm&-1\\Nq^2\ebpm\right.
	=
	\sum_{\substack{u\pmod*{q}\\(u,q)=1}}\chi(u) f\left|
	\bpm 1 & \frac{u}{q} \\ & 1\ebpm
	\bpm&-1\\Nq^2\ebpm\right..
\end{equation}
Since 
\begin{equation}
	q^{-1}
	\bpm&-1\\N\ebpm^{-1}
	\bpm 1 & \frac{u}{q}\\ & 1\ebpm
	\bpm&-1\\Nq^2\ebpm
	\bpm 1 & -\frac{v}{q}\\ & 1\ebpm 
	=
	\bpm q & -v\\ -uN & \frac{1+uvN}{q}\ebpm \in \Gamma_0(N), 
\end{equation}
provided that $uvN\equiv -1\pmod*{q}$, 
we have
\begin{multline}
	f_\chi\left|\bpm&-1\\Nq^2\ebpm\right.
	=
	\xi(q)
	\sum_{\substack{u\pmod*{q}\\ uvN \equiv -1\pmod*{q}}}
	\chi(u)  g\bigg|\bpm 1 & \frac{v}{q}\\ & 1\ebpm
	\\
	=
	\xi(q)\overline{\chi(-N)}
	\sum_{\substack{u\pmod*{q}\\ uvN \equiv -1\pmod*{q}}}
	\overline{\chi(v)}  g\bigg|\bpm 1 & \frac{v}{q}\\ & 1\ebpm
	=
	\xi(q)\overline{\chi(-N)}
	g_{\overline{\chi}}.
\end{multline}
The conclusion now follows by Hecke's argument
\cite[Theorem~4.3.5]{miyake}.

\end{proof}

\begin{lemma}
Let $\chi\pmod*{q}$ be a Dirichlet character induced by the primitive
character $\chi_*\pmod*{q_*}$.
Define $q_0 = \prod_{p\mid q, p\nmid q_*}p$ and $q_2 = \frac{q}{q_* q_0}$. 
Then $c_\chi(n)=0$ if $q_2\nmid n$, and  
\begin{equation}\label{e:cchi_decomp}
	c_\chi(nq_2)=
	q_2\chi_*(q_0)c_{\chi_*}(n)c_{q_0}(n)
	=
	q_2\chi_*(q_0)
	\tau(\chi_*)
	\mu(q_0)
	\overline{\chi_*(n)}
	\mu(\gcd(q_0, n))
	\varphi(\gcd(q_0, n)).
\end{equation}
\end{lemma}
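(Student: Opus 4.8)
The plan is to split the modulus multiplicatively so as to reduce to two standard evaluations: the generalized Gauss sum of a primitive character, and the Ramanujan sum at a prime power. Write $q=AB$, where $A$ is the product of the prime powers $p^{v_p(q)}$ over primes $p\mid q_*$ and $B$ is the product over the remaining primes dividing $q$. Then $(A,B)=1$, $q_*\mid A$, the radical of $B$ equals $q_0$, and $q_2=(A/q_*)(B/q_0)$. Note that $A/q_*$ and $q_*$ are supported on the primes dividing $q_*$, while $B/q_0$ and $q_0$ are supported on the primes dividing $q$ but not $q_*$; in particular $(A/q_*,q_0)=(B/q_0,q_*)=(q_0,q_*)=1$. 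Factoring $\chi=\chi_A\chi_B$ with $\chi_A\pmod{A}$ induced by $\chi_*$ and $\chi_B\pmod{B}$ trivial, a one-line Chinese remainder argument applied to the definition \eqref{e:cchi} gives the twisted multiplicativity
\[
c_\chi(n)=\chi_A(B)\,\chi_B(A)\,c_{\chi_A}(n)\,c_{\chi_B}(n)=\chi_*(B)\,c_{\chi_A}(n)\,c_B(n),
\]
since $\chi_B(A)=1$, $\chi_A(B)=\chi_*(B)$, and $c_{\chi_B}=c_B$.

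First I would evaluate $c_{\chi_A}(m)$. Because $A$ has the same prime divisors as $q_*$, every unit modulo $A$ is uniquely of the form $a_0+q_*t$ with $a_0$ a unit modulo $q_*$ and $t$ running over residues modulo $A/q_*$, and then $\chi_A(a_0+q_*t)=\chi_*(a_0)$. Performing the sum over $t$ first shows that $c_{\chi_A}(m)=0$ unless $(A/q_*)\mid m$, in which case $c_{\chi_A}(m)=(A/q_*)\,c_{\chi_*}\bigl(m/(A/q_*)\bigr)$. By the primitive-character identity $c_{\chi_*}(x)=\tau(\chi_*)\overline{\chi_*(x)}$ recalled in the introduction, this equals $(A/q_*)\,\tau(\chi_*)\,\overline{\chi_*\bigl(m/(A/q_*)\bigr)}$, which in particular vanishes unless $(m/(A/q_*),q_*)=1$. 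Similarly $c_B$ is multiplicative over the primes $p\mid q_0$; since $c_{p^c}(m)$ vanishes for $v_p(m)<c-1$ and equals $p^{c-1}c_p\bigl(m/p^{c-1}\bigr)$ otherwise, one finds $c_B(m)=0$ unless $(B/q_0)\mid m$, and then $c_B(m)=(B/q_0)\,c_{q_0}\bigl(m/(B/q_0)\bigr)$.

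It then remains to substitute $m=nq_2$ and collect the pieces. Since $q_2=(A/q_*)(B/q_0)$, the two divisibility conditions hold automatically, with $m/(A/q_*)=n(B/q_0)$ and $m/(B/q_0)=n(A/q_*)$. As $(B/q_0,q_*)=1$, the surviving coprimality condition $(n(B/q_0),q_*)=1$ is equivalent to $(n,q_*)=1$; if this fails, both sides of \eqref{e:cchi_decomp} vanish, the right-hand side because $\overline{\chi_*(n)}=0$. Assuming $(n,q_*)=1$, I would use $(A/q_*,q_0)=1$ to replace $\gcd(q_0,n(A/q_*))$ by $\gcd(q_0,n)$, and multiplicativity of $\chi_*$ together with $|\chi_*|=1$ to collapse $\chi_*(B)\,\overline{\chi_*(B/q_0)}$ to $\chi_*(q_0)$ (using $B=q_0\cdot(B/q_0)$ with both factors coprime to $q_*$), obtaining
\[
c_\chi(nq_2)=q_2\,\chi_*(q_0)\,\tau(\chi_*)\,\overline{\chi_*(n)}\,c_{q_0}(n).
\]
This is the first equality of \eqref{e:cchi_decomp} once one writes $\tau(\chi_*)\overline{\chi_*(n)}=c_{\chi_*}(n)$, and the second equality is the elementary formula $c_{q_0}(n)=\mu(q_0)\,\mu(\gcd(q_0,n))\,\varphi(\gcd(q_0,n))$, valid because $q_0$ is squarefree. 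Finally, for the vanishing statement: if $c_\chi(n)\neq0$, then $c_{\chi_A}(n)\neq0$ forces $v_p(n)=v_p(q_2)$ at each prime $p\mid q_*$ and $c_B(n)\neq0$ forces $v_p(n)\geq v_p(q_2)$ at each prime $p\mid q_0$, while $v_p(q_2)=0$ at all other primes; hence $q_2\mid n$.

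All the ingredients are standard, so I expect the only real obstacle to be the bookkeeping of prime supports: one has to keep careful track of the fact that $A$ shares its prime divisors with $q_*$ (which is what makes reduction modulo $q_*$ compatible with extracting units modulo $A$), of the various coprimalities among $A/q_*$, $B/q_0$, $q_0$ and $q_*$, and of the fact that the accumulated character prefactors collapse to precisely $\chi_*(q_0)$.
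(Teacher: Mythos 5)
Your argument is correct, and it takes a genuinely different route from the paper's. The paper simply quotes the closed-form evaluation of $c_\chi(n)$ for an imprimitive character from Montgomery--Vaughan \cite[\S9.2, Theorem~12]{MV} (a formula in terms of $\gcd(q,n)$, $\mu$, $\varphi$ and $\tau(\chi_*)$), reads off the vanishing condition from the factor $\chi_*\bigl(q/(\gcd(q,n)q_*)\bigr)$, and then substitutes $nq_2$ and simplifies, using $\varphi(q)/\varphi(q_*q_0)=q_2$ at the last step. You instead rederive everything from scratch: the CRT splitting $q=AB$ into the part supported on the primes of $q_*$ and the rest, the twisted multiplicativity $c_\chi=\chi_A(B)\chi_B(A)c_{\chi_A}c_{\chi_B}$, the evaluation of $c_{\chi_A}$ via the fibration of units mod $A$ over units mod $q_*$, and the prime-power reduction of $c_B$ to $c_{q_0}$. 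All of these steps check out (including the coprimality bookkeeping: $(A/q_*,q_0)=(B/q_0,q_*)=1$, the collapse $\chi_*(B)\overline{\chi_*(B/q_0)}=\chi_*(q_0)$, and the case $(n,q_*)>1$ where both sides vanish), and your treatment of the vanishing statement via the local conditions $v_p(n)\ge v_p(q_2)$ is sound. What your approach buys is self-containedness and a structural explanation of where each factor in \eqref{e:cchi_decomp} comes from; what the paper's approach buys is brevity, at the cost of leaning on a textbook identity whose proof is essentially the computation you carried out.
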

\begin{proof}
By \cite[\S9.2, Theorem 12]{MV}, if $q_*\mid\frac{q}{\gcd(q, n)}$ then 
$$
	c_\chi(n) = 
	\overline{\chi_*\!\left(\frac{n}{\gcd(q, n)}\right)} \chi_*\!\left(\frac{q}{\gcd(q, n) q_*}\right) 
	\mu\!\left(\frac{q}{\gcd(q, n) q_*}\right) \frac{\varphi(q)}{\varphi\!\left(\frac{q}{\gcd(q, n)}\right)} \tau(\chi_*),
$$
and $c_\chi(n)=0$ otherwise.
Since 
$\chi_*\!\left(\frac{q}{\gcd(q, n) q_*}\right) =
\chi_*\!\left(\frac{q_0q_2}{\gcd(q, n)}\right)=0$
unless $q_2\mid n$, 
we get $c_\chi(n)=0$ if $q_*\nmid \frac{q}{\gcd(q, n)}$ or $q_2\nmid n$. 

For an integer $n$, we get
\begin{multline*}
	c_\chi(nq_2)=
	\overline{\chi_*\!\left(\frac{n}{\gcd(q_0, n)}\right)}
	\chi_*\!\left(\frac{q_0}{\gcd(q_0, n)}\right)
	\mu\!\left(\frac{q_0}{\gcd(q_0, n)} \right)
	\frac{\varphi(q)}{\varphi\!\left(q_* \frac{q_0}{\gcd(q_0, n)}\right)}
	\tau(\chi_*)
	\\
	=
	\overline{\chi_*(n)}
	\chi_*(q_0)
	\tau(\chi_*)
	\mu(q_0) 
	\frac{\varphi(q)}{\varphi(q_* q_0)}
	\mu(\gcd(q_0, n))
	\varphi(\gcd(q_0, n)), 
\end{multline*}
since $q_0$ is squarefree and $\gcd(q_0, q_*)=1$. 
Finally, since $q$ has the same prime factors as $q_*q_0$, we have
$\frac{\varphi(q)}{\varphi(q_* q_0)}=\frac{q}{q_* q_0}=q_2$.
\end{proof}

\begin{lemma}\label{lem:L_twist_Rsum}
Let $\xi\pmod*{N}$ and $\chi\pmod*{q}$ be Dirichlet characters,
with $(q,N)=1$.
Let $\{f_n\}_{n=1}^\infty$ be a sequence of complex numbers
of at most polynomial growth, and define $\Lambda_f(s)$ and
$\Lambda_f(s,c_\chi)$ as in \eqref{eq:Lambdaf} and
\eqref{e:Lambda}. Suppose that $f_1=1$ and the $f_n$ 
satisfy the Hecke relations at primes
not dividing $N$, so that
\begin{equation}\label{e:L_euler}
\Lambda_f(s)=\Gamma_\C(s+\tfrac{k-1}2)
	\sum_{n\mid N^\infty}\lambda_nn^{-s}
	\prod_{p\nmid N} \bigl(1-\lambda_pp^{-s}+\xi(p)p^{-2s}\bigr)^{-1},
\end{equation}
where $\lambda_n:=f_nn^{-\frac{k-1}2}$.
Let $\chi_*\pmod*{q_*}$ be the primitive character inducing $\chi$,
and define
$D_{f,\chi}(s)=\Lambda_f(s,c_\chi)/\Lambda_f(s, c_{\chi_*})$.
Then $D_{f,\chi}(s)$ is a Dirichlet polynomial given by the following
formula:
\begin{multline}\label{e:L_twist_Rsum}
	D_{f,\chi}(s)
	=
	\prod_{p\mid q_*}\lambda_{p^{\ord_p(q/q_*)}}p^{\ord_p(q/q_*)(1-s)}
	\\
	\times\prod_{p\mid q, p\nmid q_*}
	p^{(\ord_p(q)-1)(1-s)}
	\bigg[
	\lambda_{p^{\ord_p(q)}} p^{1-s}
	+\lambda_{p^{\ord_p(q)-2}}\xi(p) p^{-s} 
	-\lambda_{p^{\ord_p(q)-1}}\bigl(\chi_*(p)
	+\xi(p)\overline{\chi_*(p)}p^{1-2s}\bigr)
	\bigg]
	,
\end{multline}
where we define $\lambda_{p^\ell}=0$ for any negative integer $\ell$. 

Suppose further that $\{g_n\}_{n=1}^\infty$
is a sequence of at most polynomial growth such that $g_1\ne0$,
$g_n=g_1\overline{\xi(n)}f_n$ for all $n$ coprime to $N$, and
$$
\Lambda_g(s)=g_1\Gamma_\C(s+\tfrac{k-1}2)
	\sum_{n\mid N^\infty}\tilde{\lambda}_nn^{-s}
	\prod_{p\nmid N} \bigl(1-\tilde{\lambda}_pp^{-s}
	+\overline{\xi(p)}p^{-2s}\bigr)^{-1},
$$
where $\tilde{\lambda}_n=g_1^{-1}g_nn^{-\frac{k-1}2}$.
Then $D_{f,\chi}(s)$ and
$D_{g,\overline\chi}(s):=
\Lambda_g(s,c_{\overline\chi})/\Lambda_g(s,c_{\overline{\chi}_*})$
satisfy the functional equation
\begin{equation}\label{eq:feDfchi}
	D_{f, \chi}(s)
	=
	(q/q_*)^{1-2s}\xi(q/q_*)
	D_{g,\overline\chi}(1-s).
\end{equation}
In particular, if
$\Lambda_f(s,\overline{\chi}_*)$ and $\Lambda_g(s,\chi_*)$
satisfy \eqref{eq:femult} with $(\overline{\chi}_*,q_*)$
in place of $(\chi,q)$, then $\Lambda_f(s,c_\chi)$ and
$\Lambda_g(s,c_{\overline\chi})$ satisfy \eqref{eq:fe_cchi}.
\end{lemma}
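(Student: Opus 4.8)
The plan is to establish the explicit formula \eqref{e:L_twist_Rsum} first, and then to read off \eqref{eq:feDfchi} and the final ``in particular'' assertion from it.

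\emph{Step 1: the formula for $D_{f,\chi}$.} Write $\lambda_n=f_nn^{-(k-1)/2}$. Starting from $\Lambda_f(s,c_\chi)=\Gamma_\C(s+\tfrac{k-1}2)\sum_n\lambda_nc_\chi(n)n^{-s}$, I would substitute the value of $c_\chi(n)$ given by the preceding lemma: it vanishes unless $q_2\mid n$, and $c_\chi(q_2m)=q_2\chi_*(q_0)c_{\chi_*}(m)c_{q_0}(m)$, where $c_{\chi_*}(m)=\tau(\chi_*)\overline{\chi_*(m)}$ because $\chi_*$ is primitive. After the change of variable $n=q_2m$ this writes $\Lambda_f(s,c_\chi)$ as $\Gamma_\C(s+\tfrac{k-1}2)\,q_2^{1-s}\chi_*(q_0)\tau(\chi_*)\sum_{(m,q_*)=1}\lambda_{q_2m}\overline{\chi_*(m)}c_{q_0}(m)m^{-s}$. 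I would then factor $m$ into its $q_0$-part and a cofactor coprime to $q$, and use the multiplicativity of $\lambda$ (legitimate at every prime dividing $q$, since $(q,N)=1$) together with the fact that $c_{q_0}$ is a Ramanujan sum to a \emph{squarefree} modulus, so that $c_{q_0}(m)=\prod_{p\mid q_0}c_p(m)$ with $c_p(m)$ equal to $p-1$ or $-1$ according as $p\mid m$ or $p\nmid m$; this turns the sum into a product over $p\mid q$ of local Dirichlet series times the tail $\sum_{(n,q)=1}\lambda_n\overline{\chi_*(n)}n^{-s}$. Running the same computation on $c_{\chi_*}(n)=\tau(\chi_*)\overline{\chi_*(n)}$ yields $\Lambda_f(s,c_{\chi_*})=\tau(\chi_*)\Gamma_\C(s+\tfrac{k-1}2)\bigl(\prod_{p\mid q_0}(\text{local factor})\bigr)\sum_{(n,q)=1}\lambda_n\overline{\chi_*(n)}n^{-s}$, so that in the ratio $D_{f,\chi}(s)$ the Gauss sum $\tau(\chi_*)$, the archimedean factor, and the tail all cancel, leaving a finite product over the primes $p\mid q$.

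\emph{Step 2: evaluating the local factors.} For $p\mid q_*$ the constraint $(m,q_*)=1$ makes the $p$-factor simply $\lambda_{p^{\ord_p(q/q_*)}}p^{\ord_p(q/q_*)(1-s)}$, which is exactly the first product in \eqref{e:L_twist_Rsum}. For $p\mid q$ with $p\nmid q_*$ (so that $\ord_p(q_2)=\ord_p(q)-1$), the $p$-factor and its partner coming from $\chi_*$ are built from sums $\sum_{a\ge0}\lambda_{p^{\ord_p(q)-1+a}}\bigl(\overline{\chi_*(p)}p^{-s}\bigr)^a$; I would evaluate these with the generating identity $\sum_{a\ge0}\lambda_{p^{j+a}}X^a=(\lambda_{p^j}-\xi(p)\lambda_{p^{j-1}}X)(1-\lambda_pX+\xi(p)X^2)^{-1}$ (an immediate consequence of the Hecke recursion, with the convention $\lambda_{p^{-1}}=0$), then clear denominators and simplify using $\lambda_p\lambda_{p^{\ord_p(q)-1}}=\lambda_{p^{\ord_p(q)}}+\xi(p)\lambda_{p^{\ord_p(q)-2}}$ and $\chi_*(p)\overline{\chi_*(p)}=1$. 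The local factor then collapses to $p^{(\ord_p(q)-1)(1-s)}$ times the three-term bracket appearing in \eqref{e:L_twist_Rsum}, the outer $\chi_*(q_0)$ and the alternating signs of $c_{q_0}$ accounting for the $\chi_*(p)$ inside the bracket. Keeping the $p$-adic valuations of $q_*$, $q_0$ and $q_2$ straight all the way through this reduction is the step I expect to be the main obstacle; the algebra itself is routine once the bookkeeping is set up.

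\emph{Step 3: the functional equation and its consequence.} Since $g_n=g_1\overline{\xi(n)}f_n$ for $(n,N)=1$, we have $\tilde\lambda_n=\overline{\xi(n)}\lambda_n$ there, so the formula of Steps 1--2 applies verbatim to $g$ with $(\chi,\chi_*,\xi,\lambda)$ replaced by $(\overline\chi,\overline{\chi_*},\overline\xi,\tilde\lambda)$, the inducing primitive character being $\overline{\chi_*}$ of the same modulus $q_*$. Because $q/q_*=q_0q_2$, both $D_{f,\chi}(s)$ and $(q/q_*)^{1-2s}\xi(q/q_*)D_{g,\overline\chi}(1-s)$ are finite products over $p\mid q$, and \eqref{eq:feDfchi} follows by comparing the two sides one prime at a time, using $|\xi(p)|=1$ for $p\nmid N$ and $|\chi_*(p)|=1$ for $p\nmid q_*$. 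Finally, since $\chi_*$ is primitive, $c_{\chi_*}(n)=\tau(\chi_*)\overline{\chi_*(n)}$ and $c_{\overline{\chi_*}}(n)=\tau(\overline{\chi_*})\chi_*(n)$, whence $\Lambda_f(s,c_{\chi_*})=\tau(\chi_*)\Lambda_f(s,\overline{\chi_*})$ and $\Lambda_g(s,c_{\overline{\chi_*}})=\tau(\overline{\chi_*})\Lambda_g(s,\chi_*)$; so the assumed functional equation \eqref{eq:femult}, applied with $(\overline{\chi_*},q_*)$ in place of $(\chi,q)$, is exactly a functional equation linking $\Lambda_f(s,c_{\chi_*})$ and $\Lambda_g(1-s,c_{\overline{\chi_*}})$. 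Multiplying it through by $D_{f,\chi}(s)$ and invoking $\Lambda_f(s,c_\chi)=D_{f,\chi}(s)\Lambda_f(s,c_{\chi_*})$, $\Lambda_g(1-s,c_{\overline\chi})=D_{g,\overline\chi}(1-s)\Lambda_g(1-s,c_{\overline{\chi_*}})$ and \eqref{eq:feDfchi}, the target \eqref{eq:fe_cchi} reduces to the identity $\tau(\chi_*)\overline{\chi_*(N)}q_*^{-1}\tau(\overline{\chi_*})^2=\overline{\chi(-N)}\tau(\overline{\chi_*})$, which is immediate from the standard Gauss-sum relation $\tau(\chi_*)\tau(\overline{\chi_*})=\chi_*(-1)q_*$ together with $\chi(-N)=\chi_*(-N)$.
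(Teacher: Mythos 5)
Your proposal is correct and follows essentially the same route as the paper's proof: substitute the evaluation of $c_\chi(nq_2)$ from the preceding lemma, factor the resulting Dirichlet series into local factors, collapse the truncated local sums at $p\mid q_0$ via the Hecke recursion (your generating identity $\sum_{a\ge0}\lambda_{p^{j+a}}X^a=(\lambda_{p^j}-\xi(p)\lambda_{p^{j-1}}X)(1-\lambda_pX+\xi(p)X^2)^{-1}$ is just a clean repackaging of the paper's manipulation), then obtain \eqref{eq:feDfchi} by comparing local factors using $\tilde\lambda_p=\overline{\xi(p)}\lambda_p$, and finish with $\tau(\chi_*)\tau(\overline{\chi_*})=\chi_*(-1)q_*$. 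I see no gaps.
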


\begin{proof}
Let $q_0 = \prod_{p\mid q, p\nmid q_*} p$ and $q_2 = \frac{q}{q_0 q_*}$. 
By \eqref{e:cchi_decomp}, we have
\begin{multline*}
	\frac{\Lambda_f(s, c_\chi)}{\Gamma_\C\!\left(s+\frac{k-1}{2}\right)}
	=
	\sum_{n=1}^\infty \frac{\lambda_{nq_2} c_\chi(nq_2)}{(nq_2)^s}
	\\
	=
	q_2\chi_*(q_0) \tau(\chi_*)
	\mu(q_0)
	\sum_{n=1}^\infty \frac{\lambda_{nq_2} \overline{\chi_*(n)} \mu(\gcd(q_0, n)) \varphi(\gcd(q_0, n))}
	{(nq_2)^s}
	\\
	=
	q_2\chi_*(q_0) \tau(\chi_*)
	\mu(q_0)
	\sum_{n\mid N_\infty}\frac{\lambda_n \overline{\chi_*(n)}}{n^s}
	\prod_{p\nmid q N} \sum_{j=0}^\infty \frac{\lambda_{p^j} \overline{\chi_*(p^j)}}{p^{js}}
	\prod_{p\mid\gcd(q_2, q_*)} \frac{\lambda_{p^{\ord_p(q_2)}}}{p^{\ord_p(q_2) s}}
	\\
	\times
	\prod_{p\mid q_0}
	\chi_*(p^{\ord_p(q)-1})
	\bigg[\frac{\lambda_{p^{\ord_p(q)-1}}\overline{\chi_*(p^{\ord_p(q)-1})}}{p^{(\ord_p(q)-1)s}}
	-
	\varphi(p) \sum_{j=\ord_p(q)}^\infty \frac{\lambda_{p^{j}} \overline{\chi_*(p^{j})}}
	{p^{js}}
	\bigg].
\end{multline*}
Thus,
\begin{multline*}
	D_{f, \chi}(s)
	=
	\frac{\Lambda_f(s, c_\chi)}{\Lambda_f(s, c_{\chi_*})}\\
	=
	q_2
	\prod_{p\mid q_*} \frac{\lambda_{p^{\ord_p(q/q_*)}}}{p^{\ord_p(q/q_*)s}}
	\prod_{p\mid q_0}
	\chi_*(p^{\ord_p(q)})
	\frac{-\frac{\lambda_{p^{\ord_p(q)-1}}\overline{\chi_*(p^{\ord_p(q)-1})}}{p^{(\ord_p(q)-1)s}}
	+
	\varphi(p) \sum_{j=\ord_p(q)}^\infty \frac{\lambda_{p^{j}} \overline{\chi_*(p^{j})}}
	{p^{js}}}
	{(1-\lambda_p \overline{\chi_*(p)} p^{-s} + \xi\cdot\overline{\chi_*}^2 (p) p^{-2s})^{-1}}.
\end{multline*}
For each prime $p\mid q_0$, we have
\begin{multline*}
	-\frac{\lambda_{p^{\ord_p(q)-1}}\overline{\chi_*(p^{\ord_p(q)-1})}}{p^{(\ord_p(q)-1)s}}
	+
	\varphi(p) \sum_{j=\ord_p(q)}^\infty \frac{\lambda_{p^{j}} \overline{\chi_*(p^{j})}}
	{p^{js}}
	\\
	=
	-\frac{\lambda_{p^{\ord_p(q)-1}}\overline{\chi_*(p^{\ord_p(q)-1})}}{p^{(\ord_p(q)-1)s}}
	-
	\varphi(p) \sum_{j=0}^{\ord_p(q)-1} \frac{\lambda_{p^{j}} \overline{\chi_*(p^{j})}}
	{p^{js}}
	+
	\varphi(p) (1-\lambda_p \overline{\chi_*(p)} p^{-s} + \xi\cdot\overline{\chi_*}^2 (p) p^{-2s})^{-1}.
\end{multline*}
Since $\lambda_{p^j} \lambda_p = \lambda_{p^{j+1}} + \xi(p) \lambda_{p^{j-1}}$, 
we have
\begin{multline*}
	\sum_{j=0}^{\ord_p(q)-2} \frac{\lambda_{p^{j}} \overline{\chi_*(p^{j})}}
	{p^{js}}
	=
	\bigg[
	\frac{\lambda_{p^{\ord_p(q)-2}} \overline{\chi_*(p^{\ord_p(q)-2})} \xi\cdot \overline{\chi_*}^2(p) p^{-s}}
	{p^{(\ord_p(q)-1)s}}
	-
	\frac{\lambda_{p^{\ord_p(q)-1}} \overline{\chi_*(p^{\ord_p(q)-1})}}
	{p^{(\ord_p(q)-1)s}}
	+
	1
	\bigg]
	\\
	\times
	(1-\lambda_p\overline{\chi_*(p)}p^{-s} +
	\xi\cdot\overline{\chi_*}^2(p)p^{-2s})^{-1},
\end{multline*}
so that
\begin{multline*}
	-\frac{\lambda_{p^{\ord_p(q)-1}}\overline{\chi_*(p^{\ord_p(q)-1})}}{p^{(\ord_p(q)-1)s}}
	+
	\varphi(p) \sum_{j=\ord_p(q)}^\infty \frac{\lambda_{p^{j}} \overline{\chi_*(p^{j})}}
	{p^{js}}
	=
	-p \frac{\lambda_{p^{\ord_p(q)-1}}\overline{\chi_*(p^{\ord_p(q)-1})}}{p^{(\ord_p(q)-1)s}}
	\\
	-
	\varphi(p)
	\bigg[
	\frac{\lambda_{p^{\ord_p(q)-2}} \overline{\chi_*(p^{\ord_p(q)-2})} \xi\cdot \overline{\chi_*}^2(p) p^{-s}}
	{p^{(\ord_p(q)-1)s}}
	-
	\frac{\lambda_{p^{\ord_p(q)-1}} \overline{\chi_*(p^{\ord_p(q)-1})}}
	{p^{(\ord_p(q)-1)s}}
	\bigg]
	\\
	\times
	(1-\lambda_p\overline{\chi_*(p)}p^{-s} + \xi\cdot\overline{\chi_*}^2(p)p^{-2s})^{-1}.
\end{multline*}
Therefore, for each prime $p\mid q_0$, we have
\begin{multline*}
	\frac{-\frac{\lambda_{p^{\ord_p(q)-1}}\overline{\chi_*(p^{\ord_p(q)-1})}}{p^{(\ord_p(q)-1)s}}
	+
	\varphi(p) \sum_{j=\ord_p(q)}^\infty \frac{\lambda_{p^{j}} \overline{\chi_*(p^{j})}}
	{p^{js}}}
	{(1-\lambda_p \overline{\chi_*(p)} p^{-s} + \xi\cdot\overline{\chi_*}^2 (p) p^{-2s})^{-1}}
	\\
	=
	\frac{\overline{\chi_*(p^{\ord_p(q)})}}{p^{(\ord_p(q)-1)s}}
	\bigg[
	\lambda_{p^{\ord_p(q)}}p^{1-s}
	-\lambda_{p^{\ord_p(q)-1}} \chi_*(p)
	+ 
	\lambda_{p^{\ord_p(q)-2}}\xi(p) p^{-s} 
	-\lambda_{p^{\ord_p(q)-1}} \xi\cdot\overline{\chi_*}(p)p^{1-2s}
	\bigg].
\end{multline*}
Writing $q_2=\prod_{p\mid q_*}p^{\ord_p(q/q_*)}
\prod_{p\mid q_0}p^{\ord_p(q)-1}$, this yields
\begin{multline*}
	D_{f, \chi}(s)
	=
	\prod_{p\mid q_*}\lambda_{p^{\ord_p(q/q_*)}}p^{\ord_p(q/q_*)(1-s)}
	\\
	\times\prod_{p\mid q, p\nmid q_*}
	p^{(\ord_p(q)-1)(1-s)}
	\bigg[
	\lambda_{p^{\ord_p(q)}}p^{1-s}
	-\lambda_{p^{\ord_p(q)-1}} \chi_*(p)
	+ 
	\lambda_{p^{\ord_p(q)-2}}\xi(p) p^{-s} 
	-\lambda_{p^{\ord_p(q)-1}} \xi\cdot\overline{\chi_*}(p)p^{1-2s}
	\bigg].
\end{multline*}

Since $\tilde{\lambda}_p = \overline{\xi(p)} \lambda_p$ for $p\mid q_0$, 
we also have 
\begin{multline*}
	(q/q_*)^{1-2s}\overline{\xi(q/q_*)}
	D_{f, \chi}(1-s)
	\\
	=
	q_2
	\prod_{p\mid q_*} 
	\overline{\xi(p^{\ord_p(q/q_*)})} p^{\ord_p(q/q_*)(1-2s)}
	\frac{\lambda_{p^{\ord_p(q/q_*)}}}{p^{\ord_p(q/q_*)(1-s)}}
	\\
	\times
	\prod_{p\mid q, p\nmid q_*}
	p^{-(\ord_p(q)-1)s}
	\overline{\xi(p^{\ord_p(q)})} \chi_*(p)
	\bigg[
	\lambda_{p^{\ord_p(q)}}\overline{\chi_*(p)} p^{1-s}
	-\lambda_{p^{\ord_p(q)-1}}p^{1-2s} 
	\\
	+ 
	\lambda_{p^{\ord_p(q)-2}}\overline{\chi_*(p)}\xi(p) p^{-s} 
	-\lambda_{p^{\ord_p(q)-1}} \xi\cdot\overline{\chi_*}^2(p)
	\bigg]
	\\
	=
	q_2
	\prod_{p\mid q_*} 
	\frac{\tilde{\lambda}_{p^{\ord_p(q/q_*)}}}{p^{\ord_p(q/q_*)s}}
	\prod_{p\mid q, p\nmid q_*}
	p^{-(\ord_p(q)-1)s}\overline{\chi_*(p)}
	\bigg[
	\tilde{\lambda}_{p^{\ord_p(q)}}\chi_*(p) p^{1-s}
	\\
	-\tilde{\lambda}_{p^{\ord_p(q)-1}} \overline{\xi(p)} \chi_*(p)^2 p^{1-2s} 
	+ 
	\tilde{\lambda}_{p^{\ord_p(q)-2}}\chi_*(p)\overline{\xi(p)} p^{-s} 
	-
	\tilde{\lambda}_{p^{\ord_p(q)-1}} 
	\bigg]
	\\
	=
	D_{g, \bar{\chi}}(s).
\end{multline*}
Finally, \eqref{eq:fe_cchi} follows from \eqref{eq:feDfchi} and
\eqref{eq:femult} (with $\chi$ replaced by $\overline{\chi}_*$) on noting
the equalities $c_{\chi_*}=\tau(\chi_*)\overline{\chi}_*$,
$c_{\overline{\chi}_*}=\tau(\overline{\chi}_*)\chi_*$ and
$\tau(\overline{\chi}_*)/\tau(\chi_*)
=q_*^{-1}\tau(\overline{\chi}_*)^2\chi_*(-1)$.
\end{proof}

\begin{lemma}\label{lem:Gamma1gen}
Let $\{g_1,\ldots,g_h\}$ be a generating set for
$\Gamma_1(N)$. For $i=1,\ldots,h$, let
$\gamma_i\in\langle T,W\rangle g_i\langle T,W\rangle$
be a matrix with top row $\sm r_i&b_i\esm$,
and choose $m_i\in\Z$ with $m_i\mid\frac{r_i-1}{N}$.
Then, for any $q\in\N$ satisfying $(q,Nm_i)=1$ and
$q\equiv Nm_ib_i\pmod*{r_i}$ for every $i$, we have
$H_q\supseteq\Gamma_1(N)$.
\end{lemma}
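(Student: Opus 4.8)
The plan is to reduce the statement to the assertion that each $\gamma_i$ lies in $H_q$, and then to exhibit each $\gamma_i$ explicitly as a product of two matrices with upper-left entry $q$ and elements of $\langle T,W\rangle$, the congruence hypothesis being exactly what makes the construction integral.

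First I would record the reductions. As in the proof of Theorem~\ref{thm:almostall}, $\langle T,W\rangle\subseteq H_q$ for every $q$ coprime to $N$. Since $\Gamma_1(N)=\langle T,W,g_1,\dots,g_h\rangle$ and each $g_i$ lies in $\langle T,W\rangle\gamma_i\langle T,W\rangle$, it suffices to prove $\gamma_i\in H_q$ for every $i$; and since $T\in H_q$, we are free to replace $\gamma_i$ by $\gamma_iT^{t}$ for any $t\in\Z$.

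Now fix $i$ and set $\ell=\frac{r_i-1}{Nm_i}\in\Z$; the case $r_i=1$ is trivial, since then $\gamma_i=W^{C_i}T^{b_i}\in\langle T,W\rangle$, so assume $\ell\neq0$. From $q\equiv Nm_ib_i\pmod*{r_i}$ and $Nm_i\ell\equiv-1\pmod*{r_i}$ I get $r_i\mid q\ell+b_i$, so $\gamma_i':=\gamma_iT^{-(q\ell+b_i)/r_i}$ has top row $\sm r_i&-q\ell\esm$. The same hypotheses force $\gcd(q,r_i)=1$: a common prime factor would divide $Nm_ib_i$, hence $b_i$, contradicting $\gcd(r_i,b_i)=1$ (which follows from $\det\gamma_i=1$). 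Choose $a\in\Z$ congruent to $r_i$ times a multiplicative inverse of $Nm_i$ modulo $q$; then $\gcd(a,q)=\gcd(r_i,q)=1$, so there is a matrix $\gamma_{q,a}=\sm q&-a\\Nc&d\esm\in\Gamma_0(N)$. With $a':=a-\ell$ one computes $1-Na'm_i=r_i-Nam_i\equiv0\pmod*{q}$, so there is also a matrix $\gamma_{q,a'}=\sm q&-a'\\Nm_i&d'\esm\in\Gamma_0(N)$ with the prescribed lower-left entry $Nm_i$ and $d'=(1-Na'm_i)/q$. A direct calculation then shows that the top row of $\gamma_{q,a}\gamma_{q,a'}^{-1}$ is $\bigl(qd'+Nam_i,\,q(a'-a)\bigr)=\sm r_i&-q\ell\esm$ (using $a'-a=-\ell$ and $qd'+Na'm_i=1$), the same as that of $\gamma_i'$. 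Since two elements of $\Gamma_0(N)$ with the same top row differ by a power of $W$, we obtain $\gamma_i'=W^{j}\gamma_{q,a}\gamma_{q,a'}^{-1}$ for some $j\in\Z$; hence $\gamma_i'\in H_q$ and therefore $\gamma_i\in H_q$. As $i$ was arbitrary, $\Gamma_1(N)=\langle T,W,g_1,\dots,g_h\rangle\subseteq H_q$.

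The step I expect to be the main obstacle is finding this decomposition in the first place. Because left- and right-multiplication by $\langle T,W\rangle$ alters an upper-left entry only by a multiple of $N$, one cannot turn the entry $r_i\equiv1\pmod*{N}$ in the top row of $\gamma_i$ into $q$ by such manipulations alone when $q\not\equiv1\pmod*{N}$; the way out is to write $\gamma_i$ (after a harmless twist by a power of $T$) as a single generator of $H_q$ times the inverse of one, and the content of the hypothesis is precisely that $q\equiv Nm_ib_i\pmod*{r_i}$ with $m_i\mid\frac{r_i-1}{N}$ makes both the exponent $(q\ell+b_i)/r_i$ and the entry $d'=(1-Na'm_i)/q$ integral. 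Once the right matrices are written down, the remaining verifications are routine $2\times2$ computations.
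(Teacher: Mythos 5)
Your argument is correct and is essentially the paper's proof viewed from the other direction: the paper multiplies $\gamma_iT^{(qd_i-b_i)/r_i}$ (with $d_i=-\ell$, so the same power of $T$ as yours) on the right by a matrix $h_i\in\Gamma_0(N)$ with left column $\sm q\\Nm_i\esm$ (your $\gamma_{q,a'}$) and observes that the product has upper-left entry $q$, whence both factors lie in $H_q$. Your explicit construction of $\gamma_{q,a}$ and the top-row/power-of-$W$ matching is a slightly more laborious but equivalent way of writing the same factorization, and all the integrality and coprimality checks are in order.
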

\begin{proof}
Fix a choice of $q$ satisfying the given conditions, and
set $d_i=(1-r_i)/(Nm_i)$. Then
$$
qd_i\equiv Nm_ib_id_i=(1-r_i)b_i\equiv b_i\pmod*{r_i}.
$$
By hypothesis we have $(q,Nm_i)=1$, so we can choose a matrix
$h_i\in\Gamma_0(N)$ with left column $\sm q\\Nm_i\esm$.
The upper-left entry of $\gamma_iT^{\frac{qd_i-b_i}{r_i}}h_i$
is $q(r_i+Nm_id_i)=q$, and thus $\gamma_iT^{\frac{qd_i-b_i}{r_i}}\in H_q$.
As shown in the proof of Theorem~\ref{thm:almostall},
$H_q$ also contains $T$ and $W$, and thus $g_i\in H_q$.
\end{proof}

\begin{lemma}\label{lem:height}
For $\gamma=\sm a&b\\Nc&d\esm\in\Gamma_0(N)$, define
$\height(\gamma)=\max\{|a|,|b|,|c|,|d|\}$.
Let $\tau_1,\ldots,\tau_\ell\in\bigl\{T,T^{-1},W,W^{-1}\bigr\}$, with
$\tau_{i+1}\ne\tau_i^{-1}$ for every $i=1,\ldots,\ell-1$.
Then, provided that $N\ge4$,
$$
\height(\tau_1\cdots\tau_\ell)\ge
\max\{\height(\tau_1\cdots\tau_{\ell-1}),
\height(\tau_2\cdots\tau_\ell)\}.
$$
\end{lemma}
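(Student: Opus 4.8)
The plan is to exploit the symmetries of the setup to reduce to a single case, and then to induct on the word length $\ell$, carrying along a package of sign and size relations among the matrix entries. First observe that the quantity $\height$, the property of being a reduced word, and the two operations ``delete the first letter'' and ``delete the last letter'' are all respected by each of the following height‑preserving involutions of $\langle T,W\rangle$: inversion $\gamma\mapsto\gamma^{-1}$ (which acts on words by reversing the order of the letters and inverting each); conjugation by $\sm-1&0\\0&1\esm$ (which inverts each letter in place, $T\leftrightarrow T^{-1}$, $W\leftrightarrow W^{-1}$); and conjugation by $\sm0&1\\-N&0\esm$ (which in place sends $T\mapsto W^{-1}$, $W\mapsto T^{-1}$). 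The composition of the first two is plain order‑reversal $\tau_1\cdots\tau_\ell\mapsto\tau_\ell\cdots\tau_1$, which interchanges $\height(\tau_1\cdots\tau_{\ell-1})$ and $\height(\tau_2\cdots\tau_\ell)$, so it suffices to prove $\height(\tau_1\cdots\tau_\ell)\ge\height(\tau_2\cdots\tau_\ell)$ for every reduced word. Moreover the group of relabellings of $\{T^{\pm1},W^{\pm1}\}$ generated by the last two conjugations acts transitively, so we may in addition assume $\tau_1=T$. Thus it is enough to show: \emph{if $M=\sm a&b\\Nc&d\esm$ is a reduced word not beginning with $T^{-1}$, then $\height(TM)\ge\height(M)$.}

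Now $TM=\sm a+Nc&b+d\\Nc&d\esm$, so passing from $M$ to $TM$ leaves $c$ and $d$ unchanged; hence if $\height(M)$ is attained at $c$ or $d$ there is nothing to prove, and otherwise everything comes down to the signs and sizes of $a,b,Nc,d$. When $M$ begins with $T$ or $W$, I expect the induction to supply $ac\ge0$ and $bd\ge0$; then $|a+Nc|=|a|+N|c|$ and $|b+d|=|b|+|d|$, so $\height(TM)\ge\height(M)$ follows at once. The only case needing real work is when $M$ begins with $W^{-1}$: then $ac\le0$ and $bd\le0$, so there is genuine cancellation in the top row. Here one first notes $a,c,d\neq0$ (a reduced word with $c=0$ is a power of $T$, which does not begin with $W^{-1}$, using that $\langle T,W\rangle$ is free for $N\ge4$). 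Then, since it will also be known that $|b|\le|d|$ in this situation, the only remaining subcase is $\height(M)=|a|>\max\{|b|,|c|,|d|\}$, where $|b+d|=\bigl||b|-|d|\bigr|\le|d|<|a|$, so one must prove $|a+Nc|=N|c|-|a|\ge|a|$, i.e.\ $2|a|\le N|c|$.

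To make all of this work, the object to carry in the induction should be a package of statements of the following shape: the signs of $ac$ and $bd$ are determined by the first letter of the reduced word, and those of $ab$ and $cd$ by the last letter (concretely, $ac,bd\ge0$ exactly when the first letter is $T$ or $W$), together with companion size bounds — for instance $N|c|\le(N-2)|a|$ and $2|d|\le N|b|$ when the word begins with $T^{\pm1}$, and $2|a|\le N|c|$ and $|b|\le|d|$ when it begins with $W^{\pm1}$ — and the images of all of these under the three symmetries above. Each item is verified by examining the effect of the first letter and invoking the inductive hypothesis for the word with that letter deleted: in the ``positive'' cases there is no cancellation at the seam and the relations fall out immediately, whereas in the ``negative'' cases cancellation occurs and the required arithmetic (e.g.\ $N^2-4N+2\ge0$) is precisely what consumes the hypothesis $N\ge4$. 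The base cases $\ell\le1$ are trivial.

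The main obstacle is therefore not conceptual but organizational: one must isolate a family of sign and size relations that is simultaneously strong enough to settle every case of the height inequality and stable under the recursion ``prepend one letter,'' and then push the bookkeeping through while keeping honest track of exactly which inequalities force $N\ge4$.
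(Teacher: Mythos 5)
Your plan is viable, and it is organized genuinely differently from the paper's argument. The paper also begins with the reduction you describe — using $\height(\gamma)=\height(\gamma^{-1})$ and the two conjugations by $\sm 1&\\&-1\esm$ and $\sm&-1\\N&\esm$ to prove only a one-sided inequality with a normalized letter — but it normalizes the \emph{last} letter to $T$ and then argues by \emph{minimal counterexample} rather than by a strengthened induction: from $\height(\tau_1\cdots\tau_\ell)<h:=\height(\tau_1\cdots\tau_{\ell-1})$ it deduces $h=\max\{|b|,|d|\}$ plus sign conditions, rules out $\tau_{\ell-1}\in\{T,W\}$ using minimality of the counterexample, and disposes of $\tau_{\ell-1}=W^{-1}$ by examining the maximal terminal run of $j$ copies of $W^{-1}$ and the $T^{\pm1}$ preceding it, where $jN\ge4$ supplies the contradiction. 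The paper's route avoids ever formulating invariants valid for all reduced words; yours, once the right package is isolated, reduces every step to a one-line check and proves a more informative statement.

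As written, though, your package does not close. The invariant $|b|\le|d|$ for $W^{\pm1}$-initial words is too weak: to establish $2|d|\le N|b|$ for $M=TM'$ with $M'=\sm a'&b'\\Nc'&d'\esm$ beginning with $W^{-1}$, you have $b=b'+d'$ and $d=d'$ with $b'd'\le0$, so $|b|=|d'|-|b'|$ and the requirement $2|d'|\le N(|d'|-|b'|)$ is equivalent to $N|b'|\le(N-2)|d'|$, which is strictly stronger than $|b'|\le|d'|$. The correct package is in fact dictated by your own symmetry: conjugation by $\sm&1\\-N&\esm$ sends $(a,b,c,d)\mapsto(d,-c,-b,a)$ and $T\mapsto W^{-1}$, so it must transport the $T$-invariants $N|c|\le(N-2)|a|$ and $2|d|\le N|b|$ to the $W^{-1}$-invariants $N|b|\le(N-2)|d|$ and $2|a|\le N|c|$. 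With that correction the induction does close (I checked the cases; the step that consumes $N\ge4$ is, e.g., $(N-2)\cdot\tfrac N2|c'|\ge N|c'|$ when prepending $T$ to a $W^{-1}$-initial word), so your outline can be completed, but the bookkeeping you defer is exactly where the one error sits.
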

\begin{proof}
Since $\height(\gamma)=\height(\gamma^{-1})$ for every $\gamma$, it
suffices to prove that
$\height(\tau_1\cdots\tau_\ell)\ge\height(\tau_1\cdots\tau_{\ell-1})$.
Suppose that this is false, and let $\tau_1,\ldots,\tau_\ell$ be a
counterexample of minimal length. Since
$\height(T^{\pm1})=\height(W^{\pm1})=\height(I)$, we must have $\ell>1$.

Note that $\langle T,W\rangle$ has some outer automorphisms that
preserve the height function. Specifically, conjugating an element
$\gamma=\tau_1\cdots\tau_\ell$ by $\sm 1&\\&-1\esm$
leaves $\height(\gamma)$ unchanged and swaps every occurrence of $T$
with $T^{-1}$ and $W$ with $W^{-1}$. Similarly, conjugating by
$\sm&-1\\N&\esm$ swaps $T$ with $W^{-1}$ and $W$ with $T^{-1}$.
Thus, applying an appropriate outer automorphism, we may assume without
loss of generality that $\tau_\ell=T$.

Write $\tau_1\cdots\tau_{\ell-1}=\sm a&b\\Nc&d\esm$.
Then by assumption we have
$h:=\height(\sm a&b\\Nc&d\esm)>\height(\sm a&b\\Nc&d\esm T)$, so that
$h=\max\{|a|,|b|,|c|,|d|\}>\max\{|a|,|a+b|,|c|,|Nc+d|\}$.
Hence, $h=\max\{|b|,|d|\}$. If $h=|b|$ then $|a|<|b|$ and $|a+b|<|b|$, so
$ab<0$. If $h=|d|$ then $|Nc+d|<|d|$, so $cd<0$ and $|Nc|<2|d|$.

Next we consider $\tau_{\ell-1}$, which must be one of
$T,W,W^{-1}$, since $\tau_\ell\ne\tau_{\ell-1}^{-1}$.
By minimality, we have
$\height(\sm a&b\\Nc&d\esm\tau_{\ell-1}^{-1})=
\height(\tau_1\cdots\tau_{\ell-2})\le h$.
If $\tau_{\ell-1}=T$ then we have
$\max\{|b-a|,|d-Nc|)\le h$, contradicting the fact
that $ab<0$ when $h=|b|$ and $cd<0$ when $h=|d|$.
If $\tau_{\ell-1}=W$ then we have
$\max\{|a-Nb|,|c-d|\}\le h$, which is again a contradiction.

Hence we may assume that $\tau_{\ell-1}=W^{-1}$, and we have
$\max\{|a+Nb|,|b|,|c+d|,|d|\}\le h$.
If $h=|b|$ then $|b|\ge|a+Nb|>(N-1)|b|$, which is a contradiction,
since $N>1$. Hence we must have $h=|d|$.

Next, let $j\in\{1,\ldots,\ell-1\}$ be the largest number
such that $\tau_{\ell-i}=W^{-1}$ for $i=1,\ldots,j$.
Since $|Nc|<2|d|$ and $N>1$, we must have $j<\ell-1$.
Consider $\tau_{\ell-j-1}$, which must be one of $T,T^{-1}$. We have
$$
\height(\sm a&b\\Nc&d\esm W^j\tau_{\ell-j-1}^{-1})=
\height(\tau_1\cdots\tau_{\ell-j-2})\le h.
$$
Since $\tau_{\ell-j-1}=T^{\pm1}$ and $jN\ge4$, this implies that
$$
|d|\ge\height(\sm a&b\\Nc&d\esm W^jT^{\mp1})
\ge|(jN\mp1)d+Nc|>(jN\mp1-2)|d|\ge|d|,
$$
which is a contradiction.
\end{proof}

For $N\ge4$, $\Gamma_1(N)$ is torsionfree \cite[Lemma~12.3]{kulkarni},
and hence free, by the Kurosh subgroup theorem \cite{kurosh}.
Lemma~\ref{lem:height} permits a simple, direct proof of the
following consequence:
\begin{corollary}\label{cor:TWfree}
$T$ and $W$ generate a free group if and only if $N\ge4$.
\end{corollary}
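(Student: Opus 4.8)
I would establish the two directions of the equivalence separately.

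For the ``only if'' direction, argue by contraposition: suppose
$N\le3$. Set $M=TW^{-1}=\sm 1-N&1\\-N&1\esm$, so that $\det M=1$ and
$\tr M=2-N\in\{1,0,-1\}$. Since $|\tr M|<2$, the eigenvalues of $M$ are
the (distinct) roots of $X^2-(\tr M)X+1$, which for $\tr M\in\{-1,0,1\}$
are roots of unity; hence $M$ is diagonalizable with $M^n=I$ for some
$n\in\N$, while $M\ne I$. Thus $\langle T,W\rangle$ contains a nontrivial
element of finite order, so it cannot be free.

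For the ``if'' direction, assume $N\ge4$. The plan is to show that no
nonempty reduced word in $T^{\pm1},W^{\pm1}$ equals the identity matrix;
by the normal-form characterization of free groups, this is exactly the
statement that $\langle T,W\rangle$ is free on $\{T,W\}$. So let
$\tau_1\cdots\tau_\ell$ be a reduced word, i.e.\
$\tau_i\in\{T,T^{-1},W,W^{-1}\}$ with $\tau_{i+1}\ne\tau_i^{-1}$ for all
$i$, and $\ell\ge1$. If $\ell=1$ then $\tau_1\ne I$ and we are done. If
$\ell\ge2$, then each left-truncation $\tau_1\cdots\tau_j$ is again a
reduced word, so repeated application of Lemma~\ref{lem:height} (valid
because $N\ge4$) gives
$$
\height(\tau_1\cdots\tau_\ell)\ge\height(\tau_1\cdots\tau_{\ell-1})\ge\cdots\ge\height(\tau_1\tau_2).
$$
It then suffices to show $\height(\tau_1\tau_2)\ge2$ for every reduced
pair: since $\height(I)=1$, this forces $\tau_1\cdots\tau_\ell\ne I$. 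The
pair bound is a finite check. Using the two height-preserving outer
automorphisms of $\langle T,W\rangle$ noted in the proof of
Lemma~\ref{lem:height} (conjugation by $\sm1&\\&-1\esm$ and by
$\sm&-1\\N&\esm$), the twelve reduced pairs split into three orbits,
represented by $TT$, $TW$ and $TW^{-1}$; these have heights $2$, $N+1$
and $N-1$, all at least $2$ when $N\ge4$.

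Since Lemma~\ref{lem:height} is already in hand, there is no real
obstacle here. The only points needing a little care are the bookkeeping
in iterating the height inequality (notably the degenerate case
$\ell=1$, where there is no length-two subword to bound), the base-case
pair heights, and the invocation of the standard fact that a
two-generator group in which no nonempty reduced word represents the
identity is free of rank two. One could bypass the outer-automorphism
step and instead compute all twelve pair heights directly, but the
symmetry argument is tidier and isolates the role of the hypothesis
$N\ge4$.
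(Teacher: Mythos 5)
Your proof is correct and follows essentially the same route as the paper: for $N\le3$ you exhibit a nontrivial torsion element of $\langle T,W\rangle$ (the paper checks $(W^{-1}T)^{12}=I$ directly, you argue via eigenvalues of $TW^{-1}$), and for $N\ge4$ you iterate Lemma~\ref{lem:height} on a reduced word and dispose of the length-two base case using the same height-preserving outer automorphisms the paper invokes. No gaps.
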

\begin{proof}
For $N\le3$, we verify directly that $(W^{-1}T)^{12}=I$.
For $N\ge4$, suppose that $\tau_1\cdots\tau_\ell=I$ is a nontrivial
relation of minimal length satisfied by $T$ and $W$. 
Clearly $\ell>1$, and by applying an appropriate outer automorphism,
we may assume that $\tau_1=T$. Considering each possible
$\tau_2\in\{T,W,W^{-1}\}$, we see that
$\height(\tau_1\tau_2)>1=\height(I)$,
in contradiction to Lemma~\ref{lem:height}.
\end{proof}
\bibliographystyle{amsplain}
\providecommand{\bysame}{\leavevmode\hbox to3em{\hrulefill}\thinspace}
\providecommand{\MR}{\relax\ifhmode\unskip\space\fi MR }
\providecommand{\MRhref}[2]{%
  \href{http://www.ams.org/mathscinet-getitem?mr=#1}{#2}
}
\providecommand{\href}[2]{#2}

\end{document}